\documentclass[reqno,openbib,runningheads,a4paper,12pt]{amsart}%
\usepackage{graphicx}
\usepackage{amssymb}
\usepackage{amsfonts}
\usepackage{amsmath}
\usepackage{graphicx}
\usepackage{lineno}
\usepackage[authoryear]{natbib}
\usepackage[dvipsnames]{xcolor}
\usepackage{epsf}
\usepackage{lscape}
\usepackage[legalpaper,bookmarks=true,colorlinks=true,linkcolor=blue,citecolor=blue]%
{hyperref}%
\providecommand{\U}[1]{\protect\rule{.1in}{.1in}}
\providecommand{\U}[1]{\protect \rule{.1in}{.1in}}
\newtheorem{theorem}{Theorem}[section]

\newtheorem{proposition}{Proposition}[section]

\newtheorem{lemma}{Lemma}[section]

\newtheorem{remark}{Remark}[section]

\makeatletter
\renewcommand{\@biblabel}[1]{}
\makeatother
\begin{document}

\begin{center}
{\large \textbf{Weighted and Truncated Tail Index Estimation under Random
Censoring: A Unified Full-Range Framework}}\bigskip

{\large Abdelhakim Necir}$^{\ast}${\large , Nour Elhouda Guesmia, Djamel
Meraghni}\medskip\newline

{\small \textit{Laboratory of Applied Mathematics, Mohamed Khider University,
Biskra, Algeria}}\medskip\medskip
\end{center}

\noindent\textbf{Abstract.} Estimation of the extreme value index under right
censoring is a fundamental problem in extreme value theory, with important
applications in finance, insurance, and reliability. Classical integral
estimators for Pareto-type tails typically require that the asymptotic
proportion $p$ of uncensored observations in the tail exceeds one half,
corresponding to the weak censoring regime. This restriction excludes many
practically relevant situations with strong censoring $(0<p\leq1/2)$ and
reflects the lack of a uniformly valid Gaussian approximation for the
associated tail empirical process. To overcome this limitation, we introduce a
weighted and truncated Nelson--Aalen tail empirical process and construct a
class of integral estimators indexed by a parameter $\beta>1$. This approach
restores a tractable asymptotic structure over the full censoring range
$0<p<1$. Under standard regular variation conditions, we establish a uniform
Gaussian approximation and derive consistency and asymptotic normality without
restrictions on $p$. A key ingredient is a linearization of the estimator as a
functional of the process. Simulation studies and real data applications
demonstrate improved stability and accuracy, particularly under moderate and
strong censoring. In particular, the analysis of insurance loss data (weak
censoring) and Australian AIDS survival data (strong censoring) highlights the
practical relevance of the proposed methodology across contrasting censoring
regimes.\medskip

\noindent\textbf{Keywords and Phrases:} Extreme value index; Random
Right-Censoring; Weak convergence.\medskip

\noindent\textbf{AMC 2020 Subject Classification:} 62G32; 62G05; 62G20.

\vfill

\vfill

\noindent{\small $^{\text{*}}$Corresponding author:
ah.necir@univ-biskra.dz\newline\noindent\textit{E-mail address:}\newline
nourelhouda.guesmia@univ-biskra.dz\texttt{\ }(N.~Guesmia)\newline
djamel.meraghni@univ-biskra.dz (D.~Meraghni)}\newline

\section{\textbf{Introduction\label{sec1}}}

\noindent Right-censored data arise naturally in many areas of applied
statistics, including actuarial science, survival analysis, reliability
engineering, and finance. In such settings, the variable of interest cannot
always be fully observed and is only known up to a censoring threshold. More
precisely, the true value of an observation is observed only if it occurs
before a censoring time and remains unknown otherwise. This partial
observation mechanism induces a non-negligible loss of information, thereby
requiring the development of dedicated statistical methods.\smallskip

\noindent In many practical applications, the underlying distributions exhibit
heavy tails. Pareto-type distributions are commonly used to model such
phenomena, as they allow for a significantly higher probability of extreme
observations compared with light-tailed models such as the normal
distribution. These distributions play a central role in the analysis of rare
and high-impact events and arise naturally in risk management, insurance,
finance, and environmental sciences, where extreme observations may have
substantial consequences.\smallskip

\noindent Let $X_{1},\ldots,X_{n}$ be a sample from a random variable $X$, and
$C_{1},\ldots,C_{n}$ a sample from another random variable $C$, defined on a
probability space $(\Omega,\mathcal{A},\mathbb{P})$, with continuous
cumulative distribution functions (cdfs) $F$ and $G$, respectively. Throughout
the paper, we assume that $X$ and $C$ are independent. We consider the
classical random right-censoring model where, for each $1\leq j\leq n$, we
observe the pair $(Z_{j},\delta_{j})$ defined by
\[
Z_{j}:=\min(X_{j},C_{j}), \qquad\delta_{j}:=\mathbb{I}_{\{X_{j}\leq C_{j}\}}.
\]
The indicator variable $\delta_{j}$ specifies whether the $j$-th observation
is uncensored. Hence, the observed sample combines exact observations and
censored values, leading to an incomplete description of the tail
behavior.\smallskip

\noindent We assume that the survival functions $\overline{F}:=1-F$ and
$\overline{G}:=1-G$ are regularly varying at infinity with negative indices
$-1/\gamma_{1}$ and $-1/\gamma_{2}$, respectively. Equivalently, for every
$x>0$,
\begin{equation}
\lim_{t\rightarrow\infty} \frac{\overline{F}(tx)}{\overline{F}(t)} =
x^{-1/\gamma_{1}}, \label{RVF}%
\end{equation}
and
\begin{equation}
\lim_{t\rightarrow\infty} \frac{\overline{G}(tx)}{\overline{G}(t)} =
x^{-1/\gamma_{2}}, \label{RVG}%
\end{equation}
These conditions imply that both $X$ and $C$ have Pareto-type tails, with tail
indices $\gamma_{1}$ and $\gamma_{2}$, respectively. In particular, the larger
the tail index, the heavier the corresponding tail, which directly influences
the frequency and magnitude of extreme observations.\smallskip

\noindent In order to derive refined asymptotic results, it is standard in
extreme value theory to assume a second-order framework; see, for instance,
\cite{deHF06}. Accordingly, we assume that $\overline{F}$ satisfies a
second-order condition of regular variation. This condition allows one to
quantify the rate of convergence of the Pareto approximation and plays a key
role in bias analysis. That is, for any $x>0$,
\[
\lim_{t\rightarrow\infty} \frac{U_{F}(tx)/U_{F}(t)-x^{\gamma_{1}}}{A_{1}%
^{\ast}(t)} = x^{\gamma_{1}}\frac{x^{\tau_{1}}-1}{\tau_{1}},
\]
or equivalently
\begin{equation}
\lim_{t\rightarrow\infty} \frac{\overline{F}(tx)/\overline{F}(t)-x^{-1/\gamma
_{1}}}{A_{1}(t)} = x^{-1/\gamma_{1}}\frac{x^{\tau_{1}/\gamma_{1}}-1}{\tau
_{1}\gamma_{1}}, \label{second-orderF}%
\end{equation}
where $\tau_{1}\leq0$ denotes the second-order parameter and $A_{1}(t)$ is a
function tending to zero at infinity. The parameter $\tau_{1}$ governs the
speed of convergence toward the Pareto limit.\smallskip\noindent The
functions
\[
L^{\leftarrow}(s):=\inf\{x:L(x)\geq s\},\qquad0<s<1,
\]
and
\[
U_{L}(t):=L^{\leftarrow}(1-1/t),\qquad t>1,
\]
denote, respectively, the quantile function and the tail quantile function
associated with a cdf $L$. The function $U_{L}$ provides a convenient
representation of the upper tail and plays a central role in extreme value
analysis.\smallskip

\noindent The second-order condition is satisfied by a wide class of
heavy-tailed distributions frequently used in practice, including Burr,
Fr\'{e}chet, generalized Pareto, generalized extreme value, and Student
distributions. This highlights the broad applicability of the theoretical
framework. However, the class of distributions satisfying
\eqref{second-orderF} is even broader. For instance, the modified Pareto
distribution
\[
F(x)=1-x^{-1/\gamma_{1}}\left(  1+\frac{1}{\log x}\right)  , \qquad x>e,
\]
satisfies \eqref{second-orderF} with $A_{1}(t)=1/\log t$. This example
illustrates the flexibility of the second-order condition beyond classical
parametric families.\smallskip

\noindent Let $Z=\min(X,C)$ and denote by $H$ its distribution function. Under
the independence assumption between $X$ and $C$, we have
\[
\overline{H}(x)=\overline{F}(x)\,\overline{G}(x),
\]
which implies that $\overline{H}$ is regularly varying with index $-1/\gamma$,
where
\[
\gamma:=\frac{\gamma_{1}\gamma_{2}}{\gamma_{1}+\gamma_{2}}.
\]
Thus, the observed variable $Z$ is also heavy-tailed, with tail index $\gamma
$. This harmonic-type relationship reflects the loss of tail information
induced by censoring and plays a central role in the asymptotic behavior of
estimators based on censored data.\smallskip

\noindent Estimation of the tail index in the presence of right-censored data
has received considerable attention in the literature. Several authors have
proposed adaptations of classical extreme value estimators to accommodate
censoring. Most of these approaches rely on modifying standard estimators by
incorporating information about the censoring mechanism. A well-known example
is the estimator introduced by \cite{EnFG08}, which adapts the Hill estimator
\citep{Hill75} to the censoring framework. It is defined by
\[
\widehat{\gamma}_{1,k}^{(\mathrm{EFG})}:=\frac{\widehat{\gamma}_{k}%
^{(\mathrm{H})}}{\widehat{p}_{k}},
\]
where
\[
\widehat{\gamma}_{k}^{(\mathrm{H})}:=\frac{1}{k}\sum_{i=1}^{k}\log
\frac{Z_{n-i+1:n}}{Z_{n-k:n}}%
\]
is the classical Hill estimator based on the observed sample $Z$, and
\[
\widehat{p}_{k}:=\frac{1}{k}\sum_{i=1}^{k}\delta_{\lbrack n-i+1:n]}%
\]
estimates the proportion of uncensored observations in the tail,
\[
p:=\frac{\gamma_{2}}{\gamma_{1}+\gamma_{2}}=\frac{\gamma}{\gamma_{1}}.
\]
The parameter $p$ represents the asymptotic proportion of uncensored
observations in the upper tail and plays a central role in the behavior of
tail index estimators under censoring. In particular, the regime $p>1/2$
corresponds to weak censoring, whereas $p\leq1/2$ characterizes moderate to
strong censoring. In the latter case, the effective amount of tail information
becomes too limited, and classical estimators may suffer from instability or
even asymptotic breakdown.\smallskip

\noindent Here $k=k_{n}$ denotes the number of upper order statistics,
satisfying $k\rightarrow\infty$ and $k/n\rightarrow0$ as $n\rightarrow\infty$.
The order statistics of the sample $Z_{1},\ldots,Z_{n}$ are denoted by
$Z_{1:n}\leq\cdots\leq Z_{n:n}$, and $\delta_{\lbrack1:n]},\ldots
,\delta_{\lbrack n:n]}$ represent the associated concomitants. More precisely,
$\delta_{\lbrack i:n]}$ is the censoring indicator corresponding to the
observation $Z_{i:n}$, that is, the value of $\delta_{j}$ such that
$Z_{j}=Z_{i:n}$.\smallskip

\noindent This intermediate sequence framework is standard in extreme value
theory and ensures an appropriate balance between bias and variance in tail
estimation.\smallskip

\noindent Gaussian approximations for $\widehat{p}_{k}$, $\widehat{\gamma}%
_{k}^{(\mathrm{H})}$, and $\widehat{\gamma}_{1,k}^{(\mathrm{EFG})}$ in terms
of Brownian bridges were obtained in \cite{BMN-2015}. These results provide a
refined description of the stochastic fluctuations of the estimators and form
the basis for deriving asymptotic normality.\smallskip

\noindent Another important line of research relies on Kaplan--Meier
integration. This approach exploits the product-limit structure of the
Kaplan--Meier estimator to handle censoring in a fully nonparametric way. In
this direction, \cite{WW2014} proposed the estimator
\[
\widehat{\gamma}_{1,k}^{(\mathrm{W})}:=\sum_{i=1}^{k}\frac{\delta_{\lbrack
n-i+1:n]}}{i}\frac{\overline{F}_{n}^{(\mathrm{KM})}(Z_{n-i+1:n})}{\overline
{F}_{n}^{(\mathrm{KM})}(Z_{n-k:n})}\log\frac{Z_{n-i+1:n}}{Z_{n-k:n}},
\]
where
\[
F_{n}^{(\mathrm{KM})}(z):=1-\prod_{Z_{i:n}\leq z}\left(  1-\frac
{\delta_{\lbrack i:n]}}{n-i+1}\right)
\]
denotes the Kaplan--Meier estimator \citep{KM58}.This estimator can be
interpreted as a weighted version of the Hill estimator, where the weights are
induced by the Kaplan--Meier estimate of the survival function, thereby
correcting for the censoring mechanism in the upper tail.\smallskip

\noindent Recently, \cite{BR2025} introduced an estimator based on an extreme
Kaplan--Meier construction, which can be written explicitly as
\[
\widehat{\gamma}_{1,k}^{(\mathrm{BR})}:=\sum_{i=1}^{k}\frac{\delta_{\lbrack
n-i+1:n]}}{i}\prod_{j=i+1}^{k}\left(  \frac{j-1}{j}\right)  ^{\delta_{\lbrack
n-j+1:n]}}\log\frac{Z_{n-i+1:n}}{Z_{n-k:n}}.
\]
At first glance, this representation appears structurally different from the
Kaplan--Meier integral form, due to the presence of the explicit product term.
To clarify its relationship with the estimator of \cite{WW2014}, consider the
multiplicative weight appearing in $\widehat{\gamma}_{1,k}^{(\mathrm{BR})}$:
\[
\prod_{j=i+1}^{k}\left(  \frac{j-1}{j}\right)  ^{\delta_{\lbrack n-j+1:n]}%
}=\prod_{m=n-k}^{n-i-1}\left(  1-\frac{1}{m+1}\right)  ^{\delta_{\lbrack
m:n]}}.
\]
This reindexing expresses the product in terms of the original order
statistics, making its probabilistic interpretation more transparent. The
change of index $m=n-j$ shows that this product coincides with the
product-limit structure underlying the Kaplan--Meier estimator. More
precisely, it corresponds to the multiplicative weights defining $\overline
{F}_{n}^{(\mathrm{KM})}$ evaluated over the upper order statistics. In other
words, the product term represents the discrete analogue of the Kaplan--Meier
survival function restricted to the tail region. Consequently,
\[
\widehat{\gamma}_{1,k}^{(\mathrm{BR})}\equiv\widehat{\gamma}_{1,k}%
^{(\mathrm{W})},
\]
that is, the estimator of \cite{BR2025} coincides with the Kaplan--Meier
integral estimator introduced by \cite{WW2014}. This equivalence shows that
both estimators rely on the same underlying mechanism, despite their different
algebraic representations.\smallskip

\noindent More recently, \cite{MNS2025} introduced a tail index estimator
based on a Nelson--Aalen integral representation. The main motivation behind
this approach is to obtain a formulation leading to a simpler and more
transparent asymptotic analysis while preserving the main characteristics of
Kaplan--Meier based procedures. The resulting estimator is defined by
\[
\widehat{\gamma}_{1,k}^{(\mathrm{MNS})}:=\sum_{i=1}^{k}\frac{\delta_{\lbrack
n-i+1:n]}}{i}\frac{\overline{F}_{n}^{(\mathrm{NA})}(Z_{n-i+1:n})}{\overline
{F}_{n}^{(\mathrm{NA})}(Z_{n-k:n})}\log\frac{Z_{n-i+1:n}}{Z_{n-k:n}},
\]
where
\[
F_{n}^{(\mathrm{NA})}(z):=1-\prod_{Z_{i:n}\leq z}\exp\!\left(  -\frac
{\delta_{\lbrack i:n]}}{n-i+1}\right)
\]
denotes the Nelson--Aalen estimator of the distribution function $F$
\citep{Nelson1972}.\smallskip

\noindent Overall, these results indicate that the choice between
Kaplan--Meier and Nelson--Aalen formulations is primarily driven by analytical
convenience rather than differences in asymptotic efficiency.\smallskip

\noindent The Nelson--Aalen formulation naturally induces a tail empirical
process that plays a role analogous to the extreme Kaplan--Meier process. In
\cite{MNS2025}, the authors derive an explicit Gaussian approximation for this
process, which yields a stochastic expansion of the estimator and allows them
to establish, in a unified and rigorous manner, both its consistency and
asymptotic normality. This representation separates the leading stochastic
fluctuations from higher-order bias terms, which is essential for asymptotic
analysis.\smallskip

\noindent More generally, this functional approach significantly simplifies
the derivation of asymptotic properties for extreme value statistics under
censoring. By working at the level of stochastic processes, it avoids the
technically intricate combinatorial arguments inherent in product-limit
representations, as encountered in \cite{WW2014} and \cite{BWW2019}.\smallskip

\noindent Despite their structural differences, the Kaplan--Meier-- and
Nelson--Aalen--based estimators share identical asymptotic bias and variance
under the standard weak censoring conditions. This theoretical equivalence is
further supported by empirical studies, which consistently report comparable
performance in terms of bias and mean squared error (MSE); see, for instance,
\cite{Colosimo2002}.\smallskip

\noindent More generally, this functional approach significantly simplifies
the derivation of asymptotic properties for extreme value statistics under
censoring. By working at the level of stochastic processes, it avoids the
technically intricate combinatorial arguments inherent in product-limit
representations, as encountered in \cite{WW2014} and \cite{BWW2019}.\smallskip

\noindent Both \cite{BR2025} and \cite{MNS2025} pursue the same fundamental
objective, namely the development of functional approaches to censored tail
index estimation that enable a rigorous yet simplified asymptotic analysis.
While \cite{BR2025} relies on the extreme Kaplan--Meier process,
\cite{MNS2025} exploits the Nelson--Aalen integral. Despite these different
constructions, both approaches lead to equivalent estimators and similar
asymptotic behaviors, reinforcing the idea that the choice between them is
mainly guided by analytical convenience.

\subsection{\textbf{Main contribution: weighted and truncated Nelson--Aalen
estimator}}

\noindent A major limitation of existing approaches is that their theoretical
validity relies on the condition $p>1/2$, corresponding to the weak censoring
regime. This restriction is not merely technical but intrinsic to the Gaussian
approximation of the associated tail empirical processes. In contrast, many
practical situations involve moderate or strong censoring, that is,
$0<p\leq1/2$. In such cases, the Gaussian approximation is no longer uniformly
valid, which prevents the derivation of standard asymptotic results. As a
consequence, existing estimators may exhibit instability, inflated variance,
or even a breakdown of their asymptotic properties.\smallskip\noindent This
limitation motivates the development of a new methodology capable of ensuring
uniform asymptotic validity over the entire censoring range $0<p<1$. The key
idea of the present work is to restore a tractable stochastic structure for
the tail empirical process under arbitrary censoring levels by acting directly
at the level of the process itself rather than only at the level of the
estimator.\smallskip

\noindent The construction of the proposed estimator is rooted in the
classical identity (see \cite{MNS2025})
\[
\int_{1}^{\infty}x^{-1}\frac{\overline{F}(tx)}{\overline{F}(t)}%
\,dx\longrightarrow\gamma_{1},\qquad t\rightarrow\infty,
\]
which provides a direct link between the tail index and an integral functional
of the normalized survival function. An equivalent representation, obtained by
integration by parts, is
\begin{equation}
\int_{t}^{\infty}\log\!\left(  \frac{x}{t}\right)  \,d\!\left(  \frac
{F(x)}{\overline{F}(t)}\right)  \longrightarrow\gamma_{1},\qquad
t\rightarrow\infty. \label{log}%
\end{equation}
Replacing $F$ by its Nelson--Aalen estimator and taking $t=Z_{n-k:n}$ yields
\begin{equation}
\widehat{\gamma}_{1,k}^{(\mathrm{MNS})}=\int_{Z_{n-k:n}}^{\infty}\log\!\left(
\frac{x}{Z_{n-k:n}}\right)  \,d\!\left(  \frac{F_{n}^{(\mathrm{NA})}%
(x)}{\overline{F}_{n}^{(\mathrm{NA})}(Z_{n-k:n})}\right)  . \label{gchap}%
\end{equation}
After a change of variables, this estimator admits the functional
representation
\[
\widehat{\gamma}_{1,k}^{(\mathrm{MNS})}=\int_{1}^{\infty}x^{-1}\frac
{\overline{F}_{n}^{(\mathrm{NA})}(xZ_{n-k:n})}{\overline{F}_{n}^{(\mathrm{NA}%
)}(Z_{n-k:n})}\,dx.
\]
Introducing the Nelson--Aalen tail process
\[
D_{n}(x):=\sqrt{k}\left\{  \frac{\overline{F}_{n}^{(\mathrm{NA})}(xZ_{n-k:n}%
)}{\overline{F}_{n}^{(\mathrm{NA})}(Z_{n-k:n})}-x^{-1/\gamma_{1}}\right\}  ,
\]
we obtain
\[
\sqrt{k}\left(  \widehat{\gamma}_{1,k}^{(\mathrm{MNS})}-\gamma_{1}\right)
=\int_{1}^{\infty}x^{-1}D_{n}(x)\,dx.
\]
This representation shows that the estimator is a linear functional of the
tail process. Such a structure is crucial for asymptotic analysis, as it
reduces the problem to the weak convergence of the underlying stochastic
process. However, this approach critically relies on the uniform validity of
the Gaussian approximation of $D_{n}(x)$, which typically holds only when
$p>1/2$.\smallskip

\noindent When $p\leq1/2$, this approximation breaks down due to two distinct phenomena:

\begin{itemize}
\item the remainder terms are no longer uniformly negligible,

\item a pathological contribution arises from the lower part of the
transformed scale.
\end{itemize}

\noindent These effects prevent a stable normalization of the estimator and
invalidate the classical asymptotic framework. Intuitively, under strong
censoring, the effective amount of uncensored information in the tail becomes
too limited to ensure a regular Gaussian behavior.\smallskip

\noindent To overcome this limitation, we introduce a weighted Nelson--Aalen
tail process defined on a truncated domain:
\[
D_{n,a}^{(\beta)}(x)=\sqrt{k}\,x^{-\beta/\gamma}\left\{  \frac{\overline
{F}_{n}^{(\mathrm{NA})}(xZ_{n-k:n})}{\overline{F}_{n}^{(\mathrm{NA}%
)}(Z_{n-k:n})}-x^{-1/\gamma_{1}}\right\}  ,\qquad1\leq x\leq T_{n,a},
\]
where
\[
u_{n}:=Q^{(1)}(a_{n}),\qquad T_{n,a}:=\frac{u_{n}}{Z_{n-k:n}},
\]
and $(a_{n})$ satisfies
\begin{equation}
a_{n}\rightarrow0,\qquad na_{n}\rightarrow\infty,\qquad a_{n}=o(k/n).
\label{assump_an}%
\end{equation}
Here $Q^{(1)}$ denotes the (generalized) quantile function associated with the
sub-distribution
\[
H^{(1)}(z):=\mathbb{P}(Z\leq z,\ \delta=1),
\]
and is defined by
\[
Q^{(1)}(s):=\inf\left\{  z:\ H^{(1)}(z)\geq s\right\}  ,\qquad0<s<H^{(1)}%
(\infty).
\]
\noindent We also introduce the complementary sub-distribution corresponding
to censored observations, defined by
\[
H^{(0)}(z):=\mathbb{P}(Z\leq z,\ \delta=0).
\]
These two sub-distributions satisfy the fundamental decomposition
\[
H(z)=H^{(0)}(z)+H^{(1)}(z).
\]
The above conditions ensure that the truncation level $u_{n}$ diverges to
infinity while remaining asymptotically negligible relative to the
intermediate threshold $Z_{n-k:n}$. In particular, the condition
$na_{n}\rightarrow\infty$ guarantees that the truncated region contains enough
observations for a stable empirical approximation, whereas $a_{n}=o(k/n)$
ensures that the truncation does not affect the leading asymptotic behavior of
the estimator.\smallskip

\noindent The truncation is introduced at the level of the domain of the
process rather than through an explicit modification of the numerator. This is
a crucial point: it removes the pathological contribution arising from the
lower part of the transformed scale without generating additional boundary
terms. The weight $x^{-\beta/\gamma}$ further controls the stochastic
fluctuations and improves the decay of the remainder terms.\smallskip

\noindent The main contribution of this paper is to restore a valid asymptotic
framework for tail index estimation under random censoring over the full range
$0<p<1$, including the strong censoring regime where classical methods
fail.\smallskip

\noindent\ To achieve this, we introduce a weighted and truncated
Nelson--Aalen approach that modifies the tail empirical process at a
structural level, thereby ensuring uniform asymptotic validity across all
censoring regimes.\smallskip

\noindent The proposed estimator is constructed as the empirical counterpart
of the weighted and truncated identity
\begin{equation}
\left(  \frac{\beta}{p}\right)  ^{2}\int_{t}^{u_{t}}\left(  \frac{\overline
{F}(y)}{\overline{F}(t)}\right)  ^{\beta/p-1}\log\!\left(  \frac{y}{t}\right)
\,d\!\left(  \frac{F(y)}{\overline{F}(t)}\right)  \longrightarrow\gamma_{1},
\label{ass2-tr}%
\end{equation}
which holds for every $\beta>1$ and $0<p<1$. See
Proposition~\ref{prop:app-identity}.\smallskip\noindent This construction
directly mirrors the continuous identity \eqref{ass2-tr}, ensuring that the
estimator arises as its natural empirical counterpart rather than being
introduced in an ad hoc manner. More precisely, the empirical version is
obtained by replacing $t$ with $Z_{n-k:n}$, $u_{t}$ with $u_{n}$, and $F$ with
its Nelson--Aalen estimator $F_{n}^{(\mathrm{NA})}$. The resulting discrete
form is given by
\[
\widehat{\gamma}_{1,k}^{(\mathrm{NA,tr})}(\beta)=\left(  \frac{\beta
}{\widehat{p}_{k}}\right)  ^{2}\sum_{i=m_{n}}^{k}\frac{\delta_{\lbrack
n-i+1:n]}}{i}\prod_{j=i+1}^{k}\exp\!\left\{  \left(  1-\frac{\beta
}{\widehat{p}_{k}}\right)  \frac{\delta_{\lbrack n-j+1:n]}}{j}\right\}
\log\!\left(  \frac{Z_{n-i+1:n}}{Z_{n-k:n}}\right)  ,
\]
where $m_{n}$ is the truncation index defined by $Z_{n-m_{n}+1:n}\leq
u_{n}<Z_{n-m_{n}:n}$. A natural and convenient choice is
\[
m_{n}:=\lfloor na_{n}\rfloor.
\]
Under these conditions, we have
\[
m_{n}\rightarrow\infty,\qquad\frac{m_{n}}{k}\rightarrow0,
\]
so that the truncation removes an asymptotically diverging but negligible
fraction of upper order statistics. Moreover, this choice is consistent with
the continuous truncation level $u_{n}$, since $Z_{n-m_{n}+1:n}\approx u_{n}%
$.\smallskip

\noindent For more details on the construction of the estimator and the
practical choice of $m_{n}$, see Subsections \ref{construct} and
\ref{choice_mn}, respectively.\smallskip

\noindent Hence, the proposed estimator is the exact empirical counterpart of
the weighted and truncated identity \eqref{ass2-tr}, providing a principled
extension of classical Nelson--Aalen-based tail index estimators to the full
censoring range $0<p<1$.\smallskip

\noindent A key feature of the proposed approach is that the estimator admits
the linearization
\begin{equation}
\sqrt{k}\left(  \widehat{\gamma}_{1,k}^{(\mathrm{NA,tr})}(\beta)-\gamma
_{1}\right)  =\left(  \frac{\beta}{p}\right)  ^{2}\int_{1}^{T_{n,a}%
}x^{-1+1/\gamma_{1}}D_{n,a}^{(\beta)}(x)\,dx+B_{n}+o_{\mathbf{P}}(1),
\label{functional-key}%
\end{equation}
where $B_{n}$ is a deterministic bias term.\smallskip\noindent This
representation is the cornerstone of the analysis. It shows that the estimator
is asymptotically equivalent to a linear functional of the weighted tail
process over the truncated domain, thereby restoring a Gaussian-type
asymptotic structure over the entire censoring range $0<p<1$. In particular,
it allows the Gaussian approximation of $D_{n,a}^{(\beta)}(x)$ to be directly
transferred to the estimator itself.\smallskip

\noindent Therefore, the joint use of domain truncation and weighting provides
a minimal yet effective modification of the classical framework, sufficient to
overcome the intrinsic limitations of existing approaches and to establish a
unified asymptotic theory under arbitrary censoring levels.

\subsubsection{\textbf{Constructing the estimator} $\protect\widehat{\gamma
}_{1,k}^{(\mathrm{NA,tr})}(\beta)$\textbf{\label{construct}}}

\noindent We now express the previous integral in a more tractable discrete
form based on the observed sample. To this end, we rely on the classical
representation of $F$ in terms of the distribution $H$ of $Z=\min(X,C)$ and
the sub-distribution $H^{(1)}$ associated with uncensored observations:
\[
\int_{0}^{z}\frac{dH^{(1)}(y)}{\overline{H}(y-)}=\int_{0}^{z}\frac
{dF(y)}{\overline{F}(y)}=-\log\overline{F}(z),\qquad z>0.
\]
Consequently,
\[
\overline{F}(z)=\exp\left\{  -\int_{0}^{z}\frac{dH^{(1)}(y)}{\overline{H}%
(y-)}\right\}  .
\]
The empirical counterparts are
\[
H_{n}(z):=\frac{1}{n}\sum_{i=1}^{n}\mathbb{I}_{\{Z_{i:n}\leq z\}},\qquad
H_{n}^{(1)}(z):=\frac{1}{n}\sum_{i=1}^{n}\delta_{\lbrack i:n]}\mathbb{I}%
_{\{Z_{i:n}\leq z\}},
\]
where $\mathbb{I}$ denotes the indicator function. It is worth noting that the
empirical distribution function $H_{n}$ can be decomposed into the two
sub-distribution functions $H_{n}^{(0)}$ and $H_{n}^{(1)}$ through the
identity
\[
H_{n}=H_{n}^{(0)}+H_{n}^{(1)}.
\]
Here
\[
H_{n}^{(0)}(z):=\frac{1}{n}\sum_{i=1}^{n}\left(  1-\delta_{\lbrack
i:n]}\right)  \mathbb{I}_{\{Z_{i:n}\leq z\}}.
\]
The function $H_{n}^{(0)}$ represents the empirical sub-distribution
associated with censored observations, while $H_{n}^{(1)}$ corresponds to
uncensored observations. These sub-distributions play a crucial role in the
theoretical analysis of random censoring, as they allow one to separate the
respective contributions of censored and observed data. For further details,
we refer to \cite{SW86}, p.~295. Finally, the Nelson--Aalen-type estimator of
$F$ is defined by
\begin{equation}
\overline{F}_{n}^{(\mathrm{NA})}(z):=\exp\left\{  -\int_{0}^{z}\frac
{dH_{n}^{(1)}(y)}{\overline{H}_{n}(y-)}\right\}  . \label{emp-F-formula-adapt}%
\end{equation}
\noindent Differentiating yields
\[
\frac{dF_{n}^{(\mathrm{NA})}(z)}{\overline{F}_{n}^{(\mathrm{NA})}(z)}%
=\frac{dH_{n}^{(1)}(z)}{\overline{H}_{n}(z-)}.
\]
Therefore,
\[
\widehat{\gamma}_{1,k}^{(\mathrm{NA,tr})}(\beta)=\left(  \frac{\beta
}{\widehat{p}_{k}}\right)  ^{2}\int_{0}^{\infty}\mathbb{I}_{\{Z_{n-k:n}%
<y<u_{n}\}}\left(  \frac{\overline{F}_{n}^{(\mathrm{NA})}(y)}{\overline{F}%
_{n}^{(\mathrm{NA})}(Z_{n-k:n})}\right)  ^{\beta/\widehat{p}_{k}-1}%
\log\!\left(  \frac{y}{Z_{n-k:n}}\right)  \frac{1}{\overline{H}_{n}%
(y-)}\,dH_{n}^{(1)}(y).
\]
Let $g(y)$ denote the integrand. Then, using the definition of $H_{n}^{(1)}$,
we obtain
\[
\int_{0}^{\infty}g(y)\,dH_{n}^{(1)}(y)=\frac{1}{n}\sum_{i=1}^{n}%
\delta_{\lbrack i:n]}\,g(Z_{i:n}).
\]
Moreover, recalling that $n\,\overline{H}_{n}(Z_{i:n}-)=n-i+1$, we deduce
that
\[
\widehat{\gamma}_{1,k}^{(\mathrm{NA,tr})}(\beta)=\left(  \frac{\beta
}{\widehat{p}_{k}}\right)  ^{2}\sum_{i=1}^{n}\mathbb{I}_{\{Z_{n-k:n}%
<Z_{i:n}<u_{n}\}}\left(  \frac{\overline{F}_{n}^{(\mathrm{NA})}(Z_{i:n}%
)}{\overline{F}_{n}^{(\mathrm{NA})}(Z_{n-k:n})}\right)  ^{\beta/\widehat{p}%
_{k}-1}\log\!\left(  \frac{Z_{i:n}}{Z_{n-k:n}}\right)  \frac{\delta_{\lbrack
i:n]}}{n-i+1}.
\]
The summation index arises from the restriction of the integration domain to
$[Z_{n-k:n},u_{n}]$, which corresponds exactly to the order statistics
satisfying $Z_{n-k:n}\leq Z_{i:n}\leq u_{n}$. \noindent Under the reversed
indexing, we write $Z_{i:n}=Z_{n-r+1:n}$, where $r=n-i+1$. Hence
\[
n-i+1=r,\qquad\delta_{\lbrack i:n]}=\delta_{\lbrack n-r+1:n]}.
\]
The restriction $Z_{n-k:n}<Z_{i:n}<u_{n}$ is therefore equivalent to
$r=m_{n},\ldots,k$. Consequently, the previous expression becomes
\[
\widehat{\gamma}_{1,k}^{(\mathrm{NA,tr})}(\beta)=\left(  \frac{\beta
}{\widehat{p}_{k}}\right)  ^{2}\sum_{r=m_{n}}^{k}\frac{\delta_{\lbrack
n-r+1:n]}}{r}\left(  \frac{\overline{F}_{n}^{(\mathrm{NA})}(Z_{n-r+1:n}%
)}{\overline{F}_{n}^{(\mathrm{NA})}(Z_{n-k:n})}\right)  ^{\beta/\widehat{p}%
_{k}-1}\log\!\left(  \frac{Z_{n-r+1:n}}{Z_{n-k:n}}\right)  .
\]
Renaming the index $r$ as $i$, we obtain
\[
\widehat{\gamma}_{1,k}^{(\mathrm{NA,tr})}(\beta)=\left(  \frac{\beta
}{\widehat{p}_{k}}\right)  ^{2}\sum_{i=m_{n}}^{k}\frac{\delta_{\lbrack
n-i+1:n]}}{i}\left(  \frac{\overline{F}_{n}^{(\mathrm{NA})}(Z_{n-i+1:n}%
)}{\overline{F}_{n}^{(\mathrm{NA})}(Z_{n-k:n})}\right)  ^{\beta/\widehat{p}%
_{k}-1}\log\!\left(  \frac{Z_{n-i+1:n}}{Z_{n-k:n}}\right)  .
\]
We now simplify the Nelson--Aalen survival ratio. From
\eqref{emp-F-formula-adapt}, for upper order statistics we have
\[
\frac{\overline{F}_{n}^{(\mathrm{NA})}(Z_{n-i+1:n})}{\overline{F}%
_{n}^{(\mathrm{NA})}(Z_{n-k:n})}=\prod_{j=i+1}^{k}\exp\!\left(  -\frac
{\delta_{\lbrack n-j+1:n]}}{j}\right)  .
\]
Therefore,
\[
\left(  \frac{\overline{F}_{n}^{(\mathrm{NA})}(Z_{n-i+1:n})}{\overline{F}%
_{n}^{(\mathrm{NA})}(Z_{n-k:n})}\right)  ^{\beta/\widehat{p}_{k}-1}%
=\prod_{j=i+1}^{k}\exp\!\left[  \left(  1-\frac{\beta}{\widehat{p}_{k}%
}\right)  \frac{\delta_{\lbrack n-j+1:n]}}{j}\right]  .
\]
Substituting this identity into the previous expression yields the fully
explicit discrete form
\[
\widehat{\gamma}_{1,k}^{(\mathrm{NA,tr})}(\beta)=\left(  \frac{\beta
}{\widehat{p}_{k}}\right)  ^{2}\sum_{i=m_{n}}^{k}\frac{\delta_{\lbrack
n-i+1:n]}}{i}\prod_{j=i+1}^{k}\exp\!\left[  \left(  1-\frac{\beta}%
{\widehat{p}_{k}}\right)  \frac{\delta_{\lbrack n-j+1:n]}}{j}\right]
\log\!\left(  \frac{Z_{n-i+1:n}}{Z_{n-k:n}}\right)  .
\]
As usual, an empty product is interpreted as equal to one.

\begin{remark}
A key feature of the proposed approach is that the modification is carried out
at the level of the tail empirical process rather than at the level of the
estimator itself. In contrast to existing methods, which adapt classical
estimators to the censored setting, we directly modify the stochastic
structure of the Nelson--Aalen tail process through a combination of
truncation and weighting. This perspective provides a unified framework in
which the estimator inherits its asymptotic properties from a suitably
regularized process. In particular, the functional representation
\eqref{functional-key} shows that the proposed estimator remains
asymptotically equivalent to a linear functional of a tail empirical process,
as in the classical uncensored case, but with a process specifically designed
to ensure a uniform Gaussian approximation over the full censoring range
$0<p<1$. Consequently, the present methodology should not be viewed as a
competing estimator but rather as a structural extension of the classical
Nelson--Aalen framework, restoring its asymptotic validity under strong
censoring through minimal and interpretable modifications.
\end{remark}

\subsubsection{\textbf{Practical choice of the truncation level }$m_{n}$
\textbf{\label{choice_mn}}}

\noindent The truncation index $m_{n}$ plays a crucial role in the
construction of the proposed estimator, as it removes the pathological
contribution arising from the lower part of the transformed scale. This region
corresponds to values for which the Gaussian approximation of the tail process
is no longer reliable.\smallskip

\noindent From a theoretical perspective, it is sufficient to assume that
$m_{n}\rightarrow\infty$ and $m_{n}/k\rightarrow0$, ensuring that the
truncation is asymptotically negligible while eliminating the unstable region.
These conditions guarantee that the truncation does not affect the first-order
asymptotic behavior of the estimator.\smallskip

\noindent In practice, the choice of $m_{n}$ must balance two competing
requirements. On the one hand, $m_{n}$ should diverge in order to remove the
contribution of the region near zero, where the Gaussian approximation fails
and induces a non-negligible bias under strong censoring. On the other hand,
$m_{n}$ must remain negligible compared to $k$, so that no relevant tail
information is lost.\smallskip

\noindent A convenient choice satisfying these requirements is
\[
m_{n}=\max\left(  3,\left\lfloor \log\log k\right\rfloor \right)  ,
\]
which diverges extremely slowly while ensuring that only a negligible fraction
of upper order statistics is discarded.\smallskip

\noindent An alternative choice is given by
\[
m_{n}=\left\lfloor k^{\rho}\right\rfloor ,\qquad0<\rho<1-\frac{1}{2\beta},
\]
but this approach requires the selection of an additional tuning parameter
$\rho$, which may complicate the practical implementation. In contrast, the
previous slowly varying choice is fully automatic, asymptotically valid, and
practically negligible.\smallskip

\noindent For instance, even for very large values of $k$, the quantity
$\log\log k$ remains small, so that $m_{n}$ typically takes values between $2$
and $4$ in practice. For example, for $n=2000$ (so that $k\approx200$), we
obtain $m_{n}=3$, showing that only a negligible number of upper order
statistics is discarded. This illustrates that the truncation has a negligible
impact on the empirical performance of the estimator.\smallskip

\noindent The truncation index is directly linked to the threshold $u_{n}$
through the relation $Z_{n-m_{n}+1:n}\approx u_{n}$, ensuring consistency
between the continuous and discrete formulations.\smallskip

\noindent The remainder of the paper is organized as follows.
Section~\ref{sec2} presents the main theoretical results, including the
Gaussian approximation of the weighted and truncated tail empirical process,
as well as the consistency and asymptotic normality of the proposed estimator.
This is followed by a simulation study in Section~\ref{sec3}, which evaluates
the performance of the estimator. Section~\ref{sec4} illustrates the
methodology using two real datasets: insurance loss data (weak censoring) and
AIDS survival times (strong censoring). These applications highlight the
reliability and robustness of the estimator across different tail-censoring
proportions and contamination scenarios. Proofs are deferred to
Section~\ref{sec5}, and Section~\ref{sec6} concludes the paper. Additional
technical results and simulation figures are provided in Appendices~A and~B, respectively.

\section{\textbf{Main results}\label{sec2}}

\noindent The following result is inspired by part of the proof of Theorem~2.1
in \cite{MNS2025}, but is adapted to the weighted and truncated Nelson--Aalen
framework introduced in the present paper. In contrast to the classical
setting, the present formulation explicitly incorporates both a reweighting
mechanism and a truncation device, which fundamentally modify the stochastic
structure of the tail process. Unlike the original result, which holds only
under the weak censoring condition $p>1/2$, the present formulation remains
valid over the entire censoring range $0<p<1$ by combining suitable weighting
with explicit truncation of the lower unstable tail region. This extension
constitutes a key methodological advance of the paper.\smallskip

\noindent The next three results constitute the main theoretical contribution
of the paper. They provide a complete and unified asymptotic characterization
of the proposed estimator, including a Gaussian approximation of the
underlying process, consistency, and asymptotic normality.

\begin{theorem}
\label{theorem1}

Assume that the survival function $\overline{F}$ satisfies the first-order
regular variation condition \eqref{RVF} and the second-order condition
\eqref{second-orderF} with auxiliary function $A_{1}$ such that
\[
\sqrt{k}\,A_{1}(h)\longrightarrow\lambda,\qquad h:=U_{H}(n/k).
\]
Let $k=k_{n}$ be such that $k\rightarrow\infty$ and $k/n\rightarrow0$, and let
$(a_{n})$ be a positive sequence satisfying
\[
a_{n}\rightarrow0,\qquad na_{n}\rightarrow\infty,\qquad a_{n}=o(k/n).
\]
Set
\[
Z_{k}:=Z_{n-k:n},\qquad T_{n,a}:=\frac{u_{n}}{Z_{k}},
\]
and let $\beta>1$. Define the weighted and truncated Nelson--Aalen tail
process by
\[
D_{n,a}^{(\beta)}(x):=\sqrt{k}\,x^{-\beta/\gamma}\left\{  \frac{\overline
{F}_{n}^{(\mathrm{NA})}(xZ_{k})}{\overline{F}_{n}^{(\mathrm{NA})}(Z_{k}%
)}-x^{-1/\gamma_{1}}\right\}  ,\qquad1\leq x\leq T_{n,a}.
\]
Then, for any $\varepsilon>0$,
\[
\sup_{1\leq x\leq T_{n,a}}x^{\varepsilon}\left\vert D_{n,a}^{(\beta
)}(x)-x^{-\beta/\gamma}J_{n}(x)-x^{-\beta/\gamma}x^{-1/\gamma_{1}}%
\frac{x^{\tau_{1}/\gamma_{1}}-1}{\tau_{1}\gamma_{1}}\sqrt{k}A_{1}%
(h)\right\vert =o_{\mathbf{P}}(1),
\]
as $n\rightarrow\infty$, where $J_{n}(x)$ is the Gaussian process defined in
the text.\smallskip

\noindent This approximation holds uniformly over the truncated domain $1 \le
x \le T_{n,a}$ and does not require the restriction $p>1/2$, in contrast with
the classical untruncated setting.
\end{theorem}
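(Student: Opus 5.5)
We follow the decomposition used in the proof of Theorem~2.1 of \cite{MNS2025} and check at each stage that the weight $x^{-\beta/\gamma}$ together with the truncation $x\leq T_{n,a}$ controls every remainder uniformly, whatever the value of $p$.

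\textbf{Step 1: reduction to cumulative hazards.} Write $\Lambda_n(z):=\int_0^z dH_n^{(1)}/\overline{H}_n(y-)$ and $\Lambda(z):=-\log\overline{F}(z)$. Then
\[
\frac{\overline{F}_{n}^{(\mathrm{NA})}(xZ_k)}{\overline{F}_{n}^{(\mathrm{NA})}(Z_k)}=\exp\{-(\Lambda_n(xZ_k)-\Lambda_n(Z_k))\},
\]
and we decompose
\[
\frac{\overline{F}_{n}^{(\mathrm{NA})}(xZ_k)}{\overline{F}_{n}^{(\mathrm{NA})}(Z_k)}-x^{-1/\gamma_1}
=\Bigl[\frac{\overline{F}(xZ_k)}{\overline{F}(Z_k)}-x^{-1/\gamma_1}\Bigr]
+\frac{\overline{F}(xZ_k)}{\overline{F}(Z_k)}\bigl(e^{-\Delta_n(x)}-1\bigr),
\]
where $\Delta_n(x):=[\Lambda_n-\Lambda](xZ_k)-[\Lambda_n-\Lambda](Z_k)$.

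\textbf{Step 2: the bias term.} The first bracket is treated with the second-order condition \eqref{second-orderF}, used in its Potter-type uniform version (\cite{deHF06}, Theorem~B.2.18). Since $Z_k/h\to1$ in probability, this gives
\[
\sqrt{k}\Bigl[\frac{\overline{F}(xZ_k)}{\overline{F}(Z_k)}-x^{-1/\gamma_1}\Bigr]
=x^{-1/\gamma_1}\frac{x^{\tau_1/\gamma_1}-1}{\tau_1\gamma_1}\sqrt{k}A_1(h)+o_{\mathbf P}(1)\,x^{-1/\gamma_1+\epsilon'}
\]
uniformly in $x\geq1$. Multiplying by $x^{\varepsilon-\beta/\gamma}$ leaves the error term negligible, because $\beta/\gamma+1/\gamma_1>\varepsilon+\epsilon'$.

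\textbf{Step 3: the stochastic term.} Expand $e^{-\Delta_n}-1=-\Delta_n+O(\Delta_n^2)$. For the linear part, write $\Lambda_n-\Lambda$ through the uniform empirical processes of $H$ and $H^{(1)}$. Then apply the weighted Gaussian approximation of the tail empirical processes of $(H_n,H_n^{(1)})$ by Brownian bridges, as in \cite{BMN-2015} and \cite{MNS2025}, with weight $s^{-1/2+\eta}$ in the tail variable $s=\overline{H}(xZ_k)/\overline{H}(Z_k)\approx x^{-1/\gamma}$. The Gaussian limit of $\sqrt{k}\,x^{-1/\gamma_1}\Delta_n(x)$ then defines $J_n(x)$: one stochastic integral against the bridge for $H^{(1)}$ with integrand $1/\overline{H}$, plus a term involving the bridge for $H$.

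\textbf{Step 4: uniform negligibility of remainders (main obstacle).} The key estimate is for the approximation error of the weighted empirical process. It is $o_{\mathbf P}(1)\,x^{(1/\gamma)(1/2-\eta)}$, and after integrating against $dH^{(1)}/\overline{H}$ it becomes $o_{\mathbf P}(1)\,x^{(1/\gamma)(1/2-\eta)}$ for the hazard difference. Multiplying by $x^{-1/\gamma_1}$ gives the growth rate $x^{1/(2\gamma)-1/\gamma_1}=x^{(1-2p)/(2\gamma)}$.
\begin{itemize}
\item This explains the failure of the untruncated result: the rate is bounded only when $p>1/2$.
\item Here the extra factor $x^{-\beta/\gamma}$ with $\beta>1$ makes the total exponent $(1/2-p-\beta)/\gamma+\varepsilon<0$, for every $p\in(0,1)$ and small $\varepsilon$.
\end{itemize}

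For the quadratic term $\sqrt{k}\,\Delta_n^2$, first show $\sup_{x\leq T_{n,a}}|\Delta_n(x)|=o_{\mathbf P}(1)$. This is where the truncation is essential. For $x\leq T_{n,a}$ the relevant tail region has expected number of uncensored points of order $nH^{(1)}$-mass $\geq na_n\to\infty$. A Shorack--Wellner ratio bound then gives
\[
\sup_{x\leq T_{n,a}}\frac{|\Delta_n(x)|}{\sqrt{x^{1/\gamma}/k}}=O_{\mathbf P}(1).
\]
So $\sqrt{k}\,\Delta_n^2 x^{-\beta/\gamma}=O_{\mathbf P}\bigl(x^{1/\gamma-\beta/\gamma}/\sqrt{k}\bigr)$, and the condition $\beta>1$ suffices for this to vanish. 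Beyond $T_{n,a}$, $\Lambda_n$ would be built from too few points, so the bound would fail; this is why the domain is truncated.

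Finally, we replace $\overline{F}(xZ_k)/\overline{F}(Z_k)$ by $x^{-1/\gamma_1}$ in the stochastic term, which costs $O_{\mathbf P}(A_1(h))$ uniformly. We also replace $Z_k$ by $h$ inside the Gaussian process, using the modulus of continuity of the Brownian bridge together with $\sqrt{k}(Z_k/h-1)=O_{\mathbf P}(1)$. Collecting Steps 2–4 yields the stated uniform approximation.
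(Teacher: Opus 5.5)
Your proposal is correct in outline and follows essentially the same route as the paper. Both arguments rest on the classical Nelson--Aalen decomposition into the Gaussian term $J_n$, the second-order bias and a remainder that grows like $x^{(1/2-p)/\gamma}$ up to small powers (the paper writes $x^{(2\eta-p)/\gamma+\varepsilon_0}$), and both use the weight $x^{-\beta/\gamma}$, $\beta>1$, to make that remainder negligible on $[1,T_{n,a}]$ for small $\varepsilon$.

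You are in fact more explicit than the paper about where the truncation enters: it gives the uniform smallness of $\Delta_n$ on $[1,T_{n,a}]$, driven by $na_n\to\infty$. There are two minor slips. With weight $s^{-1/2+\eta}$, the linear-part error is $o_{\mathbf{P}}(x^{(1/2+\eta)/\gamma})$, not $o_{\mathbf{P}}(x^{(1/2-\eta)/\gamma})$. Your ratio bound for $\Delta_n$ holds only up to an iterated-logarithm factor, not as $O_{\mathbf{P}}(1)$. Both are harmless, because $\beta>1$ leaves room in every exponent.
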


\noindent This result establishes that the weighted and truncated process
admits a Gaussian approximation uniformly over the truncated region, even
under strong censoring. In particular, the combination of weighting and
truncation restores a regular Gaussian-type stochastic structure that is lost
in the classical setting when $p\leq1/2$.

\begin{theorem}
[Consistency]\label{theorem2}

Assume that the survival functions $\overline{F}$ and $\overline{G}$ satisfy
the first-order regular variation conditions \eqref{RVF} and \eqref{RVG},
respectively. Let $k=k_{n}$ be an intermediate sequence such that
\[
k\to\infty\qquad\text{and} \qquad k/n\to0, \quad\text{as } n\to\infty.
\]
Let $(a_{n})$ be a positive sequence satisfying \eqref{assump_an}, and define
\[
m_{n} := \lfloor n a_{n} \rfloor.
\]
Then, for every $\beta>1$,
\[
\widehat{\gamma}_{1,k}^{(\mathrm{NA,tr})}(\beta) \overset{\mathbf{P}%
}{\longrightarrow} \gamma_{1}, \qquad\text{as } n\to\infty.
\]

\end{theorem}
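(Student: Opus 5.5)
Consistency will follow from the continuous identity \eqref{ass2-tr} (Proposition~\ref{prop:app-identity}) combined with a stochastic approximation of the empirical integral. Only first-order conditions are assumed, so no rate is needed. Throughout, write $\widehat{\gamma}:=\widehat{\gamma}_{1,k}^{(\mathrm{NA,tr})}(\beta)$ and $Z_k:=Z_{n-k:n}$.

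\textbf{Step 1: replace $\widehat{p}_k$ by $p$.} Under \eqref{RVF}--\eqref{RVG} it is known (e.g.\ \cite{BMN-2015}, or directly from the law of large numbers for the concomitants) that $\widehat{p}_{k}\to p$ in probability. Since $\beta>1$, we have $\beta/p-1>\beta-1>0$. The map
\[
q\mapsto (\beta/q)^2 R^{\beta/q-1}\log y, \qquad 0<R\le1,
\]
is locally uniformly continuous in $q$ near $p$, with bounded derivative $\lesssim |\log R|\,R^{c}\log y$ for some $c>0$. Hence
\[
\widehat{\gamma}=\widetilde{\gamma}+o_{\mathbf P}(1)\cdot O_{\mathbf P}\big(\widehat{\gamma}_{1,k}^{(\mathrm{MNS})}\text{-type sums}\big),
\]
where $\widetilde{\gamma}$ denotes the same statistic with $\widehat{p}_k$ replaced by $p$. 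The factor $R^{c}|\log R|$ is bounded, so the comparison sum is of Hill type, $\sum_{i\le k} i^{-1}\log(Z_{n-i+1:n}/Z_k)\le O_{\mathbf P}(1)$ (since $\gamma$-Hill is consistent). This gives $\widehat{\gamma}-\widetilde{\gamma}=o_{\mathbf P}(1)$.

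\textbf{Step 2: rewrite $\widetilde{\gamma}$ as a functional of the tail process.} Change variables $y=xZ_k$ and integrate by parts, as for \eqref{gchap}. Set $R_n(x):=\overline{F}_{n}^{(\mathrm{NA})}(xZ_k)/\overline{F}_{n}^{(\mathrm{NA})}(Z_k)$ and $\theta:=\beta/p$. Then
\[
\widetilde{\gamma}=\theta\int_1^{T_{n,a}}x^{-1}R_n(x)^{\theta}\,dx-\theta\,R_n(T_{n,a})^{\theta}\log T_{n,a}.
\]
This is up to the convention for jumps in the Nelson--Aalen product; the resulting discrepancy is $O(\sum_i i^{-2})$-small on $[m_n,k]$ since $m_n\to\infty$. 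The deterministic analogue, obtained with $\overline F(xt)/\overline F(t)$ and $t=Z_k$, converges to $\gamma_1$ by \eqref{ass2-tr}. This uses $Z_k/h\to1$ in probability and the fact that $T_{n,a}\to\infty$ in probability because $a_n=o(k/n)$.

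\textbf{Step 3: uniform ratio consistency on the truncated domain.} I will show that
\[
\sup_{1\le x\le T_{n,a}} x^{\theta/\gamma_1-\varepsilon'}\,\big|R_n(x)^{\theta}-x^{-\theta/\gamma_1}\big|=o_{\mathbf P}(1)
\]
for a small $\varepsilon'>0$. This rests on the hazard representation
\[
-\log R_n(x)=\int_{Z_k}^{xZ_k}\frac{dH_n^{(1)}}{\overline H_n(y-)} .
\]
On $[Z_k,u_n]$ we have $n\overline H_n\ge m_n\to\infty$. Uniform relative consistency of the tail empirical sub-distributions (Wellner/Shorack-type ratio bounds for $H_n,H_n^{(1)}$ on $\{\overline H\ge a_n\}$, valid because $na_n\to\infty$) then gives
\[
-\log R_n(x)=\gamma_1^{-1}\log x\,(1+o_{\mathbf P}(1))+o_{\mathbf P}(1)
\]
uniformly. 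Here Potter bounds control $\overline F(xZ_k)/\overline F(Z_k)$. Exponentiating, and using the factor $x^{-\theta/\gamma_1}$ with $\theta>1$ together with the weight $x^{-1}$, shows integrability. Dominated convergence on $[1,T_{n,a}]$, plus the boundary term $R_n(T_{n,a})^\theta\log T_{n,a}\to0$, gives $\widetilde{\gamma}\to\gamma_1$.

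\textbf{Main obstacle.} The hard part is Step 3 near the upper truncation point, where only about $m_n$ observations remain and relative errors are largest. There the multiplicative error $x^{o_{\mathbf P}(1)}$ in $R_n(x)$ must be absorbed by the polynomial decay $x^{-\theta/\gamma_1}$. I would first try to bound it via the exponential-inequality ratio bound
\[
\sup_{\overline H(y)\ge a_n}\big|\overline H_n(y)/\overline H(y)-1\big|=o_{\mathbf P}(1),
\]
which is where $na_n\to\infty$ is essential. If this fails, I would split the integral at a large fixed $M$ and control $\int_M^{T_{n,a}}$ by a crude $O_{\mathbf P}(M^{-c})$ bound.
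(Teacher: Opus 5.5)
The gap is in Step~1. After the mean-value bound you discard the factor $R^{c}|\log R|$ and claim the comparison sum $\sum_{i\le k}i^{-1}\log(Z_{n-i+1:n}/Z_k)$ is $O_{\mathbf P}(1)$ because the Hill estimator is consistent. That sum is not the Hill estimator, since its weights are $1/i$ rather than $1/k$. Because $\log(Z_{n-i+1:n}/Z_k)\approx\gamma\log(k/i)$, the sum over $m_n\le i\le k$ behaves like $\gamma\sum_{i=m_n}^{k}i^{-1}\log(k/i)\approx\frac{\gamma}{2}\log^{2}(k/m_n)\to\infty$. Inserting the indicators $\delta_{[n-i+1:n]}$ only changes the constant.

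So what you actually obtain is $|\widehat p_k-p|\,O_{\mathbf P}(\log^{2}(k/m_n))$, and under first-order conditions alone this need not vanish. The random part of $\widehat p_k-p$ is $O_{\mathbf P}(k^{-1/2})$. Its bias, however, is governed by how fast $\mathbb{P}(Z>t,\delta=1)/\overline H(t)\to p$, and this can be arbitrarily slow. The decay of the ratio is precisely what makes such $1/i$-weighted sums finite, so it cannot be thrown away.

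You can repair Step~1 by bounding $(1+|\log R|)R^{c}\le C_{\epsilon}R^{c-\epsilon}$ with $0<\epsilon<c$. The resulting comparison sum is a statistic of the same type with a positive exponent. Its boundedness, however, needs the uniform control of Step~3, so the steps would have to be reordered.

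It is simpler to drop Step~1 altogether. The statement you derive in Step~3, namely $-\log R_n(x)=\gamma_1^{-1}\log x\,(1+o_{\mathbf P}(1))+o_{\mathbf P}(1)$ uniformly on $[1,T_{n,a}]$, does not involve the exponent. With $\alpha_n=\beta/\widehat p_k\to\beta/p$ it therefore gives at once $R_n(x)^{\alpha_n}\to x^{-\beta/\gamma}$ pointwise. It also gives $R_n(x)^{\alpha_n}\le Cx^{-\beta/\gamma+\varepsilon}$ on $[1,T_{n,a}]$ with probability tending to one. This is exactly how the paper handles the random exponent (Lemmas~\ref{lemma:power-convergence-alpha} and~\ref{lem:domination-Qn}). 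It then applies dominated convergence and the boundary bound $(na_n/k)^{\beta}\log(k/(na_n))\to0$.

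With that fix, your argument has the paper's architecture, and your route to the domination is actually preferable. The paper obtains Lemma~\ref{lem:domination-Qn} from Proposition~\ref{prop:app-stochastic}(iii). That result rests on the Gaussian approximation of Theorem~\ref{theorem1}, and hence on a second-order condition that Theorem~\ref{theorem2} does not assume. Your route uses only first-order information: the hazard representation, ratio consistency of $\overline H_n$ and $\overline H_n^{(1)}$ on $\{\overline H\ge c\,a_n\}$ (which is where $na_n\to\infty$ enters), and Potter bounds.

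One caveat on Step~2 concerns the representation $\theta\int_1^{T_{n,a}}x^{-1}R_n(x)^{\theta}\,dx-\theta R_n(T_{n,a})^{\theta}\log T_{n,a}$. It holds, up to $O(i^{-2})$ jump corrections, for the integral definition \eqref{ass2-tr}, which is the one the paper's proof starts from in Proposition~\ref{prop:app-functional}. At each jump, $d\big(F_n^{(\mathrm{NA})}(y)/\overline F_n^{(\mathrm{NA})}(Z_k)\big)$ equals, up to a factor $1+O(1/i)$, the ratio times $\delta_{[n-i+1:n]}/i$. The corresponding sum therefore carries exponent $\beta/\widehat p_k$ on the ratio. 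This differs from the exponent $\beta/\widehat p_k-1$ in the displayed discrete formula that you differentiate in Step~1. With the displayed formula the limit would be $\gamma_1\beta^{2}/(\beta-p)^{2}$, so this is not a jump-convention discrepancy. Work with the integral form throughout.
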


\noindent This result shows that the proposed estimator is consistent for the
tail index $\gamma_{1}$ over the entire censoring range $0<p<1$, including
both weak and strong censoring regimes.

\begin{theorem}
[Asymptotic normality]\label{theorem3}

In addition to the assumptions of Theorem~\ref{theorem2}, assume that the
second-order condition \eqref{second-orderF} holds and let $h:=U_{H}(n/k)$.
Assume that
\[
\sqrt{k}A_{1}(h)=O(1),
\]
and that
\begin{equation}
\sqrt{k}\left(  \frac{na_{n}}{k}\right)  ^{\beta}\log\!\left(  \frac{k}%
{na_{n}}\right)  \rightarrow0. \label{supplementary_assumption}%
\end{equation}
Then, for any fixed $\beta>1$,
\[
\sqrt{k}\left(  \widehat{\gamma}_{1,k}^{(\mathrm{NA,tr})}(\beta)-\gamma
_{1}\right)  =\gamma_{1}N_{n}(\beta)+\frac{\beta}{\beta-p\tau_{1}}\sqrt
{k}A_{1}(h)+o_{\mathbf{P}}(1),
\]
as $n\rightarrow\infty$, where
\[
N_{n}(\beta)=\frac{\beta(p+\beta-1)}{p}\int_{0}^{1}s^{\beta-2}\mathbf{W}%
_{n,1}(s)\,ds-\frac{\beta}{p}\mathbf{W}_{n,1}(1)+\beta\int_{0}^{1}s^{\beta
-2}\mathbf{W}_{n,2}(s)\,ds.
\]
If, moreover, $\sqrt{k}A_{1}(h)\rightarrow\lambda<\infty$, then
\[
\sqrt{k}\left(  \widehat{\gamma}_{1,k}^{(\mathrm{NA,tr})}(\beta)-\gamma
_{1}\right)  \overset{\mathcal{D}}{\longrightarrow}\mathcal{N}(\mu_{\beta
},\sigma_{\beta}^{2}),
\]
where
\[
\mu_{\beta}=\frac{\lambda\beta}{\beta-p\tau_{1}},\qquad\sigma_{\beta}%
^{2}=\frac{1}{p}\frac{\beta^{2}}{2\beta-1}\gamma_{1}^{2}.
\]

\end{theorem}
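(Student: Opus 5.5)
The plan is to derive the result from the functional linearization \eqref{functional-key} combined with the Gaussian approximation of Theorem~\ref{theorem1}. The proof has three stages: the main term, the truncation remainder, and the plug-in of $\widehat p_k$.

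\textbf{Main term at fixed $p$.} First freeze $\widehat p_k$ at its limit $p$ and write $\widetilde\gamma$ for the corresponding estimator. By the construction in Subsection~\ref{construct}, $\widetilde\gamma$ is the empirical version of \eqref{ass2-tr} with $t=Z_k$. Integrating by parts in $y$ and changing variables $y=xZ_k$, I would write
\[
\sqrt{k}\bigl(\widetilde\gamma-\gamma_{1}\bigr)
=\Bigl(\frac{\beta}{p}\Bigr)^{2}\int_{1}^{T_{n,a}}x^{-1+1/\gamma_{1}}D_{n,a}^{(\beta)}(x)\,dx+R_{n}+B_{n}',
\]
where $R_n$ collects the boundary term at $T_{n,a}$ and the error from linearizing the power $(\overline F_n/\overline F_n(Z_k))^{\beta/p-1}$ around $x^{-(\beta/p-1)/\gamma_1}=x^{-\beta/\gamma+1/\gamma_1}$. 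This identity holds because $(\beta/p-1)/\gamma_{1}=\beta/\gamma-1/\gamma_1$, which is exactly why the weight $x^{-\beta/\gamma}$ appears.

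Next, substitute Theorem~\ref{theorem1}. The approximation error is $o_{\mathbf P}(1)$ after integration, since the weight $x^{-1+1/\gamma_1}\cdot x^{-\varepsilon}$ must be integrable against the uniform bound on $[1,T_{n,a}]$. Rather than relying on $x^{1/\gamma_1}$ integrability, I would arrange the factors so that the integrand is $x^{-1-\beta/\gamma+1/\gamma_1}J_n(x)$ times bounded quantities.

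\textbf{Identifying $N_n(\beta)$ and the bias.} Write $J_n$ in terms of the Brownian bridges $\mathbf W_{n,1},\mathbf W_{n,2}$ evaluated at $s=x^{-1/\gamma}$; these are the uncensored and full sub-distribution parts, as in \cite{MNS2025}. Changing variables to $s$ gives integrals of the form $\int_0^1 s^{\beta-2}\mathbf W_{n,i}(s)\,ds$, together with the endpoint term $\mathbf W_{n,1}(1)$. These are finite precisely because $\beta>1$, and this is the role of the weight for $p\le1/2$.

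The bias integral is
\[
\Bigl(\frac{\beta}{p}\Bigr)^{2}\int_1^\infty x^{-1-\beta/(p\gamma_1)}\,\frac{x^{\tau_1/\gamma_1}-1}{\tau_1\gamma_1}\,dx .
\]
An elementary computation evaluates it to $\beta/(\beta-p\tau_1)$ times $\sqrt k A_1(h)$. Extending the integration range from $T_{n,a}$ to $\infty$ costs $O(\sqrt k\,T_{n,a}^{-\beta/\gamma}\log T_{n,a})$.

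\textbf{Truncation and plug-in.} The truncation remainder is controlled as follows. We have $T_{n,a}^{-1/\gamma}\approx na_n/k$, because $\overline H(u_n)\asymp a_n$ and $\overline H(Z_k)\approx k/n$. The leftover terms are therefore of order $\sqrt k (na_n/k)^\beta\log(k/na_n)$, which tends to $0$ by \eqref{supplementary_assumption}.

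Finally, replace $p$ by $\widehat p_k$. A Taylor expansion in $\beta/\widehat p_k$ shows that the derivative of the integral functional with respect to $\beta/p$ is asymptotically zero at the limit. This holds because $(c^2)\int(\cdot)^{c-1}\log$ has a stationary structure that yields $\gamma_1$ for every $c$. Hence the effect of $\sqrt k(\widehat p_k-p)=O_{\mathbf P}(1)$ is $o_{\mathbf P}(1)$. If that fails, its contribution should be absorbed into the $\mathbf W_{n,1}(1)$ term, using the known representation of $\widehat p_k$ from \cite{BMN-2015}.

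The variance $\sigma_\beta^2$ then follows from the covariance of $\mathbf W_{n,1},\mathbf W_{n,2}$, which is $\min(s,t)-st$ scaled by $p$ and $1$ as appropriate, by direct computation with the kernels $s^{\beta-2}$. Asymptotic normality follows since $N_n(\beta)$ is a centered Gaussian variable.

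\textbf{Main obstacle.} I expect the hardest step to be the uniform control of the nonlinear remainder $R_n$ near the truncation point $T_{n,a}$. There the Nelson--Aalen ratio is based on only about $m_n$ observations, so relative errors are of order $m_n^{-1/2}$ rather than $k^{-1/2}$. I would try first to bound $R_n$ using the weighted bound of Theorem~\ref{theorem1} with a small $\varepsilon$, together with $m_n\to\infty$.
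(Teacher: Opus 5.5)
Your plan follows the paper's route. You start from the linearization \eqref{functional-key} (Proposition~\ref{prop:linearization}) and insert Theorem~\ref{theorem1}. You then read off $\gamma_1N_n(\beta)$ and the bias, dispose of the truncation boundary term with \eqref{supplementary_assumption}, and finish with Slutsky. Several of your refinements are sound and tighter than the paper's own text. First, you center at $x^{-1/\gamma_1}$, so the bias comes only from the second-order term of Theorem~\ref{theorem1}. Your value $(\beta/p)^2\int_1^\infty x^{-1-\beta/\gamma}\frac{x^{\tau_1/\gamma_1}-1}{\tau_1\gamma_1}\,dx=\frac{\beta}{\beta-p\tau_1}$ is correct. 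The paper instead takes the bias from $B_n$ and then silently drops that second-order term when it passes to $\int_1^{T_{n,a}}x^{-1-(\beta-p)/\gamma}J_n(x)\,dx$. Second, you are right that Theorem~\ref{theorem1} with a small $\varepsilon$ cannot be integrated against $x^{-1+1/\gamma_1}$. One needs $\varepsilon>1/\gamma_1$, which the bound $x^{-\beta/\gamma}R_n(x)=o_{\mathbf P}(x^{(2\eta-p-\beta)/\gamma+\varepsilon_0})$ from its proof allows because $\beta>2\eta$. Third, the identity $c\int_1^\infty x^{-1-c/\gamma_1}\,dx=\gamma_1$ for every $c>0$ is the real reason why replacing $\beta/\widehat p_k$ by $\beta/p$ is harmless; the paper only asserts this.

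The step that fails as written is the identification of the limit law. $\mathbf W_{n,1}$ and $\mathbf W_{n,2}$ are not Brownian bridges on $[0,1]$, and $\mathbf W_{n,2}$ is not the ``full'' part. They are the rescaled uncensored and censored parts, which is why the combination $p\mathbf W_{n,2}-q\mathbf W_{n,1}$ appears inside $J_n$. In the intermediate-tail scaling any bridge correction is of order $k/n$. So they are independent Wiener processes with $\mathbb{E}[\mathbf W_{n,1}(s)\mathbf W_{n,1}(t)]=p\min(s,t)$ and $\mathbb{E}[\mathbf W_{n,2}(s)\mathbf W_{n,2}(t)]=q\min(s,t)$, as in Proposition~\ref{prop:app-gaussian}. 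With your covariance $\min(s,t)-st$, the endpoint $\mathbf W_{n,1}(1)$ would be degenerate and the cross terms would change. Your ``direct computation'' would then not return $\beta^2/(p(2\beta-1))$. The simplification hinges precisely on $\operatorname{Var}(\frac{\beta}{p}\mathbf W_{n,1}(1))=\beta^2/p$ and $\operatorname{Cov}(\int_0^1 s^{\beta-2}\mathbf W_{n,1}(s)\,ds,\mathbf W_{n,1}(1))=p/\beta$.

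Your fallback for $\widehat p_k$ is also misguided. The term $-\frac{\beta}{p}\mathbf W_{n,1}(1)$ already comes out of $J_n$: its piece $-x^{-1/\gamma_1}\mathbf W_{n,1}(1)$ integrates to exactly $-\gamma_1\frac{\beta}{p}\mathbf W_{n,1}(1)$. A $\widehat p_k$ contribution placed there would therefore be double counted. Moreover, $\sqrt k(\widehat p_k-p)=O_{\mathbf P}(1)$ is not among the hypotheses, since there is no second-order condition on $\overline G$, so it must be added. Thanks to the stationarity in $c$, the weaker condition $(\widehat p_k-p)\log(k/(na_n))=O_{\mathbf P}(1)$ would suffice.

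Finally, the paper simply absorbs the obstacle you flag near $T_{n,a}$ into Theorem~\ref{theorem1} and Proposition~\ref{prop:linearization}. If you want to prove it, avoid the $\eta>1/4$ sup-bounds. At $x\approx T_{n,a}$ they only give a relative error $k^{2\eta-1/2}m_n^{-2\eta}$, which diverges for slowly growing $m_n$. Instead, use second moments, $\operatorname{Var}(\sqrt k\log Q_n(x))\approx p(x^{1/\gamma}-1)$, together with a maximal inequality for the Nelson--Aalen martingale. This gives a uniform relative error $O_{\mathbf P}(m_n^{-1/2})$ and a quadratic Taylor term of order $k^{-1/2}\int_1^{T_{n,a}}x^{-1-(\beta-1)/\gamma}\,dx$.
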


\noindent The supplementary condition \eqref{supplementary_assumption} ensures
that the boundary term induced by truncation is asymptotically negligible at
the $\sqrt{k}$ scale. Consequently, the truncation has no impact on the
limiting distribution of the estimator, and the asymptotic normality is
entirely driven by the weighted tail empirical process.

\begin{remark}
A natural and practical choice of the truncation index is
\[
m_{n}=\max\left(  3,\lfloor\log\log k\rfloor\right)  .
\]
We show that this choice satisfies all the conditions required in
Theorems~\ref{theorem1}--\ref{theorem3}. First, since $k\rightarrow\infty$, we
have
\[
m_{n}=\lfloor\log\log k\rfloor\longrightarrow\infty.
\]
Next, since $\log\log k=o(k)$, it follows that
\[
\frac{m_{n}}{k}\longrightarrow0.
\]
Recall that $a_{n}=m_{n}/n$. Then
\[
a_{n}=\frac{\log\log k}{n}\longrightarrow0,
\]
since $n\rightarrow\infty$. Moreover,
\[
na_{n}=m_{n}=\log\log k\longrightarrow\infty,
\]
which verifies the condition $na_{n}\rightarrow\infty$. We now check that
$a_{n}=o(k/n)$. Indeed,
\[
\frac{a_{n}}{k/n}=\frac{m_{n}}{k}=\frac{\log\log k}{k}\longrightarrow0.
\]
Finally, we verify the strengthened condition required for the asymptotic
normality:
\[
\sqrt{k}\left(  \frac{na_{n}}{k}\right)  ^{\beta}\log\!\left(  \frac{k}%
{na_{n}}\right)  =\sqrt{k}\left(  \frac{m_{n}}{k}\right)  ^{\beta}%
\log\!\left(  \frac{k}{m_{n}}\right)  .
\]
Since $m_{n}=\log\log k$, we obtain
\[
\left(  \frac{m_{n}}{k}\right)  ^{\beta}=\left(  \frac{\log\log k}{k}\right)
^{\beta},\qquad\log\!\left(  \frac{k}{m_{n}}\right)  =\log k-\log\log\log
k\sim\log k.
\]
Hence
\[
\sqrt{k}\left(  \frac{m_{n}}{k}\right)  ^{\beta}\log\!\left(  \frac{k}{m_{n}%
}\right)  =k^{1/2-\beta}(\log\log k)^{\beta}\log k.
\]
Since $\beta>1$, we have $1/2-\beta<0$, and therefore
\[
k^{1/2-\beta}(\log\log k)^{\beta}\log k\longrightarrow0.
\]
This shows that all the required conditions are satisfied. In particular, the
choice $m_{n}=\lfloor\log\log k\rfloor$ provides a theoretically valid and
practically negligible truncation.
\end{remark}

\begin{remark}
If $p>1/2$, then $\sigma_{\beta}^{2}<\sigma_{p}^{2}$. This shows that, in the
weak censoring regime, the asymptotic variance of $\widehat{\gamma}
_{1,k}^{(\mathrm{NA,tr})}(\beta)$ (resp. $\widehat{\gamma}_{1,k}
^{(\mathrm{KM})}(\beta)$) is smaller than that of $\widehat{\gamma}
_{1,k}^{(\mathrm{MNS})}$ (resp. $\widehat{\gamma}_{1,k}^{(\mathrm{W})}$).
Hence, the proposed estimator not only remains valid under strong censoring,
but may also improve efficiency in the weak censoring regime. Indeed,
\[
\sigma_{\beta}^{2}-\sigma_{p}^{2}=\frac{(\beta-p)(2p\beta-p-\beta
)}{p(2p-1)(2\beta-1)}\gamma_{1}^{2}<0,
\]
provided that $1<\beta<p/(2p-1)$, which holds since $p/(2p-1)>1$. This
inequality provides a useful guideline for selecting $\beta$ in practice.
\end{remark}

\begin{remark}
According to \cite{BWW2019}, the asymptotic variance of $\widehat{\gamma
}_{1,k}^{(\mathrm{BW})}(\beta)$ equals $\gamma_{1}^{2}p(1+\beta\gamma_{1}%
)^{2}/(2p(1+\beta\gamma_{1})-1)$, whereas that of $\widehat{\gamma}%
_{1,k}^{(\mathrm{W})}$ is $\gamma_{1}^{2}p/(2p-1)$ for $p>1/2$. However, the
comparison between these two variances is not straightforward, as it depends
on the joint effect of $p$, $\gamma_{1}$, and $\beta$. A simple algebraic
manipulation shows that the difference between the two variances is
\[
\frac{p\beta\gamma_{1}}{2p-1}\frac{(2p-1)\beta\gamma_{1}-2(1-p)}%
{2p(1+\beta\gamma_{1})-1}.
\]
The sign of this expression is not determined a priori, which makes the
selection of $\beta$ more delicate in this framework.
\end{remark}

\begin{remark}
As previously mentioned, Theorem~\ref{theorem3} simultaneously generalizes the
asymptotic normality of the estimators $\widehat{\gamma}_{1,k}^{(\mathrm{MNS}
)}$ and $\widehat{\gamma}_{1,k}^{(\mathrm{W})}$ under the condition $p>1/2$.
More importantly, it extends these results to the full censoring range
$0<p<1$, which is not covered by existing approaches. Furthermore, it relaxes
the assumptions required to establish the asymptotic normality of
$\widehat{\gamma}_{1,k}^{(\mathrm{W})}$ in \cite{BWW2019} by extending its
validity beyond the classical Hall-type models. In addition, the theorem
provides a convenient functional representation of the limiting distribution
in terms of Wiener processes.
\end{remark}

\begin{remark}
(Asymptotic choice of the tuning parameter $\beta$) A natural way to discuss
the choice of $\beta$ is through the first-order asymptotic mean squared error
(AMSE). From Theorem~\ref{theorem3}, we have
\[
\sqrt{k}\left(  \widehat{\gamma}_{1,k}^{(\mathrm{NA,tr})}(\beta)-\gamma
_{1}\right)  =\gamma_{1}N_{n}(\beta)+\frac{\beta}{\beta-p\tau_{1}}\sqrt
{k}A_{1}(h)+o_{\mathbb{P}}(1),
\]
which yields
\[
\mathrm{AMSE}(\beta)\approx\frac{\gamma_{1}^{2}}{pk}\frac{\beta^{2}}{2\beta
-1}+\left(  \frac{\beta}{\beta-p\tau_{1}}\right)  ^{2}A_{1}(h)^{2}.
\]
For $\beta>1$, both the variance component
\[
\frac{\beta^{2}}{2\beta-1}%
\]
and the bias component
\[
\frac{\beta}{\beta-p\tau_{1}}%
\]
are increasing functions of $\beta$. Consequently, the AMSE does not admit an
interior minimizer at the first-order level.
\end{remark}

\noindent Therefore, the optimal asymptotic strategy is to select $\beta$ as
close as possible to $1$, while maintaining $\beta>1$ to preserve the validity
of the Gaussian approximation. In practice, this suggests choosing $\beta$
slightly larger than $1$, for instance $\beta=1.01$ or $\beta=1.001$.

\section{\textbf{Simulation study}\label{sec3}}

\noindent This section evaluates the finite-sample performance of the proposed
estimator $\widehat{\gamma}_{1,k}^{(\mathrm{NA,tr})}(\beta)$ and compares it
with the classical Nelson--Aalen estimator $\widehat{\gamma}_{1,k}%
^{(\mathrm{MNS})}$ and the adapted Hill estimator $\widehat{\gamma}%
_{1,k}^{(\mathrm{EFG})}$, with particular emphasis on moderate and strong
censoring regimes. The objective is to assess both the stability and accuracy
of the proposed method in settings where classical approaches are known to
deteriorate.\smallskip

\noindent To this end, we consider three standard heavy-tailed models, widely
used in extreme value analysis:

\begin{itemize}
\item the Burr distribution with parameters $(\zeta,\eta)$, defined by
\[
F(x)=1-\left(  1+x^{1/\eta}\right)  ^{-\eta/\zeta},\qquad x>0,
\]

\item the Fr\'{e}chet distribution with parameter $\zeta$, given by
\[
F(x)=\exp\!\left(  -x^{-1/\zeta}\right)  ,\qquad x>0,
\]

\item the log-gamma distribution, defined through $\log X\sim\Gamma
(\alpha,\varsigma)$, with distribution function
\[
F(x)=\frac{1}{\varsigma^{\alpha}\Gamma(\alpha)}\int_{0}^{\log x}u^{\alpha
-1}e^{-u/\varsigma}\,du,\qquad x>1.
\]

\end{itemize}

\noindent These models allow us to cover a broad range of tail behaviors,
including both regularly varying and sub-exponential-type distributions,
thereby providing a comprehensive assessment of the estimators under different
tail regimes.\smallskip

\noindent For each model, we consider the following censoring schemes:

\begin{itemize}
\item[$\bullet$] Burr$(\gamma_{1},\eta)$ censored by Burr$(\gamma_{2},\eta)$,

\item[$\bullet$] Fr\'{e}chet$(\gamma_{1})$ censored by Fr\'{e}chet$(\gamma
_{2})$,

\item[$\bullet$] Log-gamma$(\alpha_{1},\varsigma_{1})$ censored by
Log-gamma$(\alpha_{2},\varsigma_{2})$,
\end{itemize}

\noindent where $\alpha_{1}=1/\gamma_{1}$ and $\alpha_{2}=1/\gamma_{2}$. This
construction ensures that both the lifetime and censoring distributions belong
to the same parametric family, allowing for a controlled variation of the
censoring intensity.\smallskip

\noindent The model parameters are fixed as follows: $\eta=0.25$ and
$\varsigma_{1}=\varsigma_{2}=2$. We consider two values for the tail index,
$\gamma_{1}=0.4$ and $\gamma_{1}=0.7$, corresponding respectively to
moderately and strongly heavy-tailed distributions. These choices are standard
in the literature and allow us to evaluate the performance of the estimators
across different levels of tail heaviness.\smallskip

\noindent Three censoring levels are investigated: $p=0.70,\;0.50,\;\text{and
}0.30$, representing weak, moderate, and strong censoring regimes,
respectively. These values are chosen to explicitly assess the performance of
the estimators across the full censoring spectrum, including the challenging
regime $p\leq1/2$, where classical methods typically fail. For each pair
$(\gamma_{1},p)$, the corresponding value of $\gamma_{2}$ is obtained from
$p=\frac{\gamma_{2}}{\gamma_{1}+\gamma_{2}}$, ensuring consistency between the
tail behavior of the signal and the censoring distribution.\smallskip

\noindent The effect of the tuning parameter is studied for $\beta
=1.01,\;1.5,\;\text{and }2$. The value $\beta=1.01$ corresponds to the
theoretically recommended regime, where $\beta$ is chosen close to $1$. This
choice is motivated by the asymptotic theory, which suggests that values of
$\beta$ close to $1$ provide a favorable balance between bias and variance
while ensuring stability under strong censoring. The additional values
$\beta=1.5$ and $\beta=2$ are included to assess the impact of stronger
weighting on the bias--variance trade-off.\smallskip

\noindent For each configuration, $2000$ independent samples of size $n=1000$
are generated. The sample size $n=1000$ is standard in extreme value
simulations and ensures that a sufficiently large number of upper order
statistics is available for reliable estimation. This choice provides a
reasonable compromise between computational cost and statistical
accuracy.\smallskip

\noindent The results are summarized in Figures~\ref{fig1}--\ref{fig9}
(Appendix~B), which report the empirical bias and mean squared error (MSE) of
the estimators as functions of the number of upper order statistics $k$. This
representation allows us to analyze both the stability of the estimators with
respect to the threshold choice and their overall efficiency across different
censoring regimes.

\subsection{Discussion of the simulation results}

\noindent The simulation results reported in Figures~\ref{fig1}--\ref{fig9}
provide a systematic comparison between the proposed estimator
$\widehat{\gamma}_{1,k}^{(\mathrm{NA,tr})}(\beta)$ and the competing
estimators $\widehat{\gamma}_{1,k}^{(\mathrm{MNS})}$ and $\widehat{\gamma
}_{1,k}^{(\mathrm{EFG})}$ across different censoring levels and heavy-tailed
models. The analysis focuses on both accuracy (bias) and efficiency (MSE), as
well as on the stability with respect to the threshold parameter
$k$.\smallskip

\noindent A first key observation is that the performance of all estimators
strongly depends on the censoring intensity. Under strong censoring
($p=0.30$), the proposed estimator clearly outperforms the Nelson--Aalen
estimator $\widehat{\gamma}_{1,k}^{(\mathrm{MNS})}$ in terms of both bias and
mean squared error (MSE), especially for moderate values of $k$. This behavior
reflects the instability of classical methods in this regime, where the
effective amount of uncensored tail information is limited. In particular, the
improvement is most visible in the region where classical estimators exhibit
high variability. This improvement is particularly pronounced for $\beta$
close to $1$, confirming the theoretical recommendation. In contrast, larger
values of $\beta$ (e.g., $\beta=2$) tend to increase the bias as $k$ grows,
reflecting the bias--variance trade-off induced by the weighting
scheme.\smallskip

\noindent Under moderate censoring ($p=0.50$), the performance gap between
estimators narrows, as more uncensored observations become available in the
tail. Nevertheless, the proposed estimator with $\beta$ close to $1$ maintains
a stable behavior, with bias values close to zero and consistently smaller MSE
over a wide range of $k$. This indicates that the method adapts smoothly to
intermediate censoring regimes.\smallskip

\noindent In the weak censoring regime ($p=0.70$), all estimators exhibit
similar performance, which is expected since censoring has a limited impact on
the tail. In this setting, the proposed estimator remains competitive, showing
no noticeable loss of efficiency compared to classical methods. This confirms
that the proposed modification does not degrade performance in favorable
scenarios.\smallskip

\noindent These conclusions are remarkably consistent across the three
considered models (Burr, Fr\'{e}chet, and log-gamma), indicating that the
proposed methodology is robust with respect to the underlying distribution. In
particular, no significant model-dependent degradation is observed. This
robustness highlights the general applicability of the approach.\smallskip

\noindent Another important aspect concerns the role of the tuning parameter
$\beta$. The simulations clearly show that values of $\beta$ slightly larger
than $1$ provide the best compromise between bias and variance. This
observation is fully consistent with the asymptotic analysis, which indicates
that the optimal choice of $\beta$ lies close to $1$. Conversely, larger
values of $\beta$ tend to overweight extreme observations, resulting in
increased bias for large values of $k$. This confirms the practical relevance
of the theoretical guidance for selecting $\beta$.\smallskip

\noindent As expected in extreme value estimation, all estimators exhibit a
U-shaped MSE curve as a function of $k$, reflecting the classical
bias--variance trade-off. However, the proposed estimator with $\beta$ close
to $1$ provides a wider stability region, with relatively flat MSE curves over
a broad range of $k$. This feature is particularly valuable in practice, as it
reduces sensitivity to the choice of the threshold parameter. In particular,
it alleviates the need for fine tuning of $k$.\smallskip

\noindent Finally, the most significant improvement brought by the proposed
estimator is observed under strong censoring. In this regime, classical
estimators may suffer from instability and increased variability, whereas the
proposed weighted and truncated estimator remains stable and accurate. This
clearly illustrates the advantage of the proposed regularization mechanism.
This empirical behavior is consistent with the theoretical results established
in Section~\ref{sec2}, which guarantee asymptotic validity over the entire
censoring range $0<p<1$.\smallskip

\noindent Overall, the simulation study confirms that the combination of
weighting and truncation provides a substantial improvement in finite-sample
performance, particularly in challenging censoring scenarios, while preserving
efficiency in favorable settings, including the weak censoring regime. These
findings strongly support the practical relevance of the proposed methodology.

\section{\textbf{Real data application\label{sec4}}}

\subsection{Insurance loss data}

\noindent The insurance loss dataset, collected by the US Insurance Services
Office, Inc., is publicly available through the copula package in the R
statistical software. This dataset has been extensively studied in the
literature; see, for instance, \cite{FV98}, \cite{Klugman99}, and
\cite{DPV06}. It consists of $1500$ observations, among which $34$ are
right-censored, indicating that the corresponding losses exceed policy limits
that vary across contracts.\smallskip

\noindent Such datasets are typical in actuarial science and risk management,
where large insurance claims are often subject to policy limits or reporting
thresholds. As a result, the exact magnitude of extreme losses is not fully
observed, but only known to exceed a given bound. This naturally leads to
right-censored heavy-tailed data, for which appropriate statistical methods
are required to ensure reliable inference in the tail. In particular, this
type of censoring mechanism directly motivates the use of estimators that
remain stable under incomplete tail information.\smallskip

\noindent Since our analysis is conducted within the framework of Pareto-type
tail models, it is essential to assess whether the data are compatible with
this class of distributions. To this end, we consider the log--log survival
plot based on the Nelson--Aalen estimator $F^{(\mathrm{NA})}$, which properly
accounts for right-censored observations.\smallskip

\noindent Under a Pareto-type tail behavior, the survival function satisfies
\[
\mathbb{P}(X>x)\sim Cx^{-1/\gamma_{1}},\qquad x\rightarrow\infty,
\]
which implies that the plot of $\log\overline{F}^{(\mathrm{NA})}(x)$ versus
$\log x$ should be approximately linear in the upper tail. This diagnostic is
standard in extreme value analysis and provides a graphical validation of the
regular variation assumption.\smallskip

\noindent The resulting plot (Figure~\ref{loss}) exhibits a clear linear trend
for the largest observations, with a negative slope, providing empirical
support for a Pareto-type tail behavior. Although deviations from linearity
are observed for smaller values, the upper tail remains well aligned with the
theoretical model. This pattern is typical of heavy-tailed data, where the
Pareto approximation is expected to hold only beyond a sufficiently high
threshold. This empirical evidence supports the use of extreme value methods
for estimating the tail index and justifies the application of the proposed
estimator in this setting.\smallskip

\noindent The log--log survival plot (Figure~\ref{loss}) provides empirical
support for the use of extreme value methods in this context. In particular,
the approximate linearity observed in the upper tail, together with the
negative slope, is consistent with a Pareto-type tail behavior. This suggests
that the regular variation assumption is reasonable for sufficiently large
observations. Consequently, the use of tail index estimators adapted to
heavy-tailed distributions under right censoring is well justified in this
setting.
\begin{figure}[ptb]%
\centering
\includegraphics[
height=2.6394in,
width=4.1831in
]%
{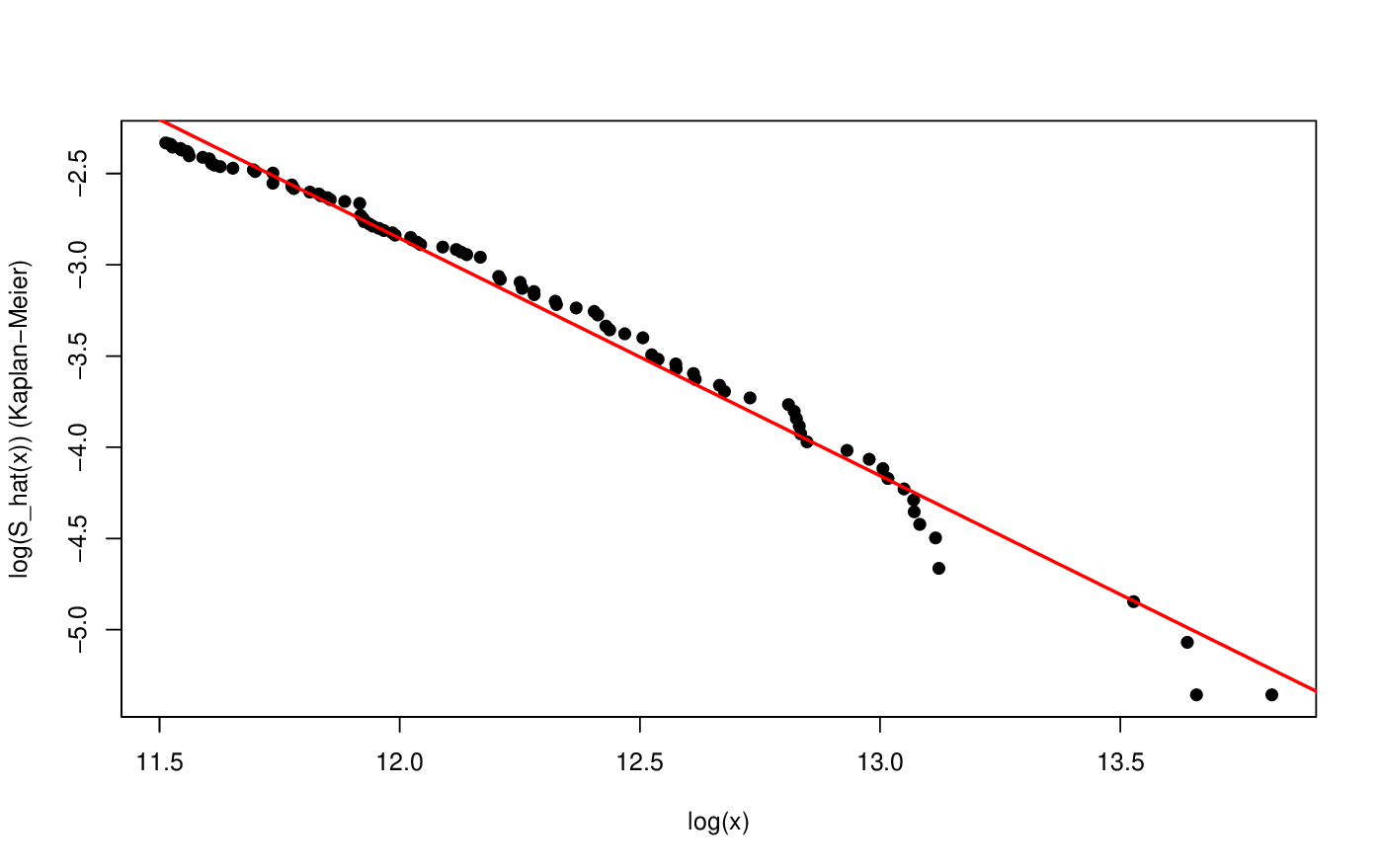}%
\caption{Log--log survival plot based on the Nelson--Aalen estimator for the
insurance loss data. The approximate linearity observed in the upper tail,
along with the negative slope, is characteristic of a Pareto-type heavy-tailed
behavior.}%
\label{loss}%
\end{figure}

\noindent Using the adaptive selection procedure of Reiss and Thomas
\citep{ReTo97}, the optimal sample fraction for estimating the censoring
proportion is $k_{\mathrm{opt}}=51$, yielding $\widehat{p}_{k}=0.76$. This
relatively large value confirms that the dataset falls within the weak
censoring regime, where classical estimators are expected to perform
reasonably well.\smallskip

\noindent The adapted Hill estimator $\widehat{\gamma}_{1,k}^{(\mathrm{EFG})}$
achieves its optimal value at $k=73$, leading to the estimate $\widehat{\gamma
}_{1,k}^{(\mathrm{EFG})}=0.77$. For the Nelson--Aalen tail index estimator
$\widehat{\gamma}_{1,k}^{(\mathrm{MNS})}$, the optimal choice is $k=30$,
resulting in $\widehat{\gamma}_{1,k}^{(\mathrm{MNS})}=0.45$. \smallskip

\noindent The same selection procedure is applied to the proposed estimator.
Following the theoretical recommendations of Section~\ref{sec3}, we set
$\beta=1.01$. The corresponding optimal number of upper order statistics is
$k=30$, yielding the estimate $\widehat{\gamma}_{1,k}^{(\mathrm{NA,tr})}%
(\beta)=0.51$.\smallskip

\noindent The adaptive selection of $k$ is based on the criterion
\begin{equation}
k_{\mathrm{opt}}:=\arg\min_{1<k<n}\frac{1}{k}\sum_{i=1}^{k}i^{\nu}\left\vert
\widehat{\xi}_{i}-\mathrm{median}\left(  \widehat{\xi}_{1},\ldots
,\widehat{\xi}_{k}\right)  \right\vert ,\qquad0\leq\nu\leq\frac{1}{2},
\label{kopt}%
\end{equation}
where $\widehat{\xi}_{i}$ denotes a generic tail index estimator computed from
the $i$ largest observations. Following \cite{NF2004}, we set $\nu=0.3$, which
provides a good compromise between bias and variability.\smallskip

\noindent A notable feature of the results is that the proposed estimator
yields an estimate that lies between those obtained from the adapted Hill
estimator and the classical Nelson--Aalen estimator. This intermediate
behavior is consistent with its theoretical construction, which balances bias
and variance through the tuning parameter $\beta$.\smallskip

\noindent More importantly, the proposed estimator provides a stable estimate
while relying on a relatively moderate number of upper order statistics
($k=30$), indicating reduced sensitivity to the threshold selection. This
reduced sensitivity is particularly desirable in practice, where the choice of
$k$ is often a major source of uncertainty. This observation is in line with
the simulation results of Section~\ref{sec3}, where the estimator exhibits a
wide stability region with respect to $k$.\smallskip

\noindent Overall, the empirical analysis confirms that the proposed weighted
and truncated estimator provides a reliable and stable estimation of the tail
index in real data applications, while remaining consistent with both
theoretical predictions and simulation evidence.

\subsection{The Australian AIDS survival dataset}

\noindent The Australian AIDS survival dataset, compiled by Dr. P. J. Solomon
and the Australian National Centre for HIV Epidemiology and Clinical Research,
consists of $2843$ patients diagnosed prior to July 1, 1991. In this study, we
focus on a subset of $2754$ male patients. This dataset has been extensively
analyzed in the literature; see, for instance, \cite{RS-94}, \cite{VR-02}
(pp.~379--385), \cite{EnFG08}, and \cite{Ndao16}.\smallskip

\noindent The observations correspond to survival times measured from AIDS
diagnosis until death or the end of the study, leading naturally to
right-censored data. Such datasets are typical in survival analysis and
provide a challenging framework for extreme value methods due to the potential
scarcity of uncensored observations in the tail. In particular, this setting
is representative of moderate to strong censoring regimes, where the effective
amount of tail information may be severely limited.\smallskip

\noindent As in the previous application, we assess the compatibility of the
data with a Pareto-type tail by examining the log--log survival plot based on
the Nelson--Aalen estimator. The plot, displayed in Figure~\ref{aids-1},
exhibits an approximately linear behavior in the upper tail together with a
negative slope, which is consistent with a heavy-tailed Pareto-type
distribution. This suggests that the regular variation assumption is
reasonable for sufficiently large survival times. Although deviations from
linearity are visible over intermediate ranges, the tail behavior remains
sufficiently regular to justify the use of extreme value techniques.
\begin{figure}[ptb]%
\centering
\includegraphics[
height=2.6394in,
width=4.1831in
]%
{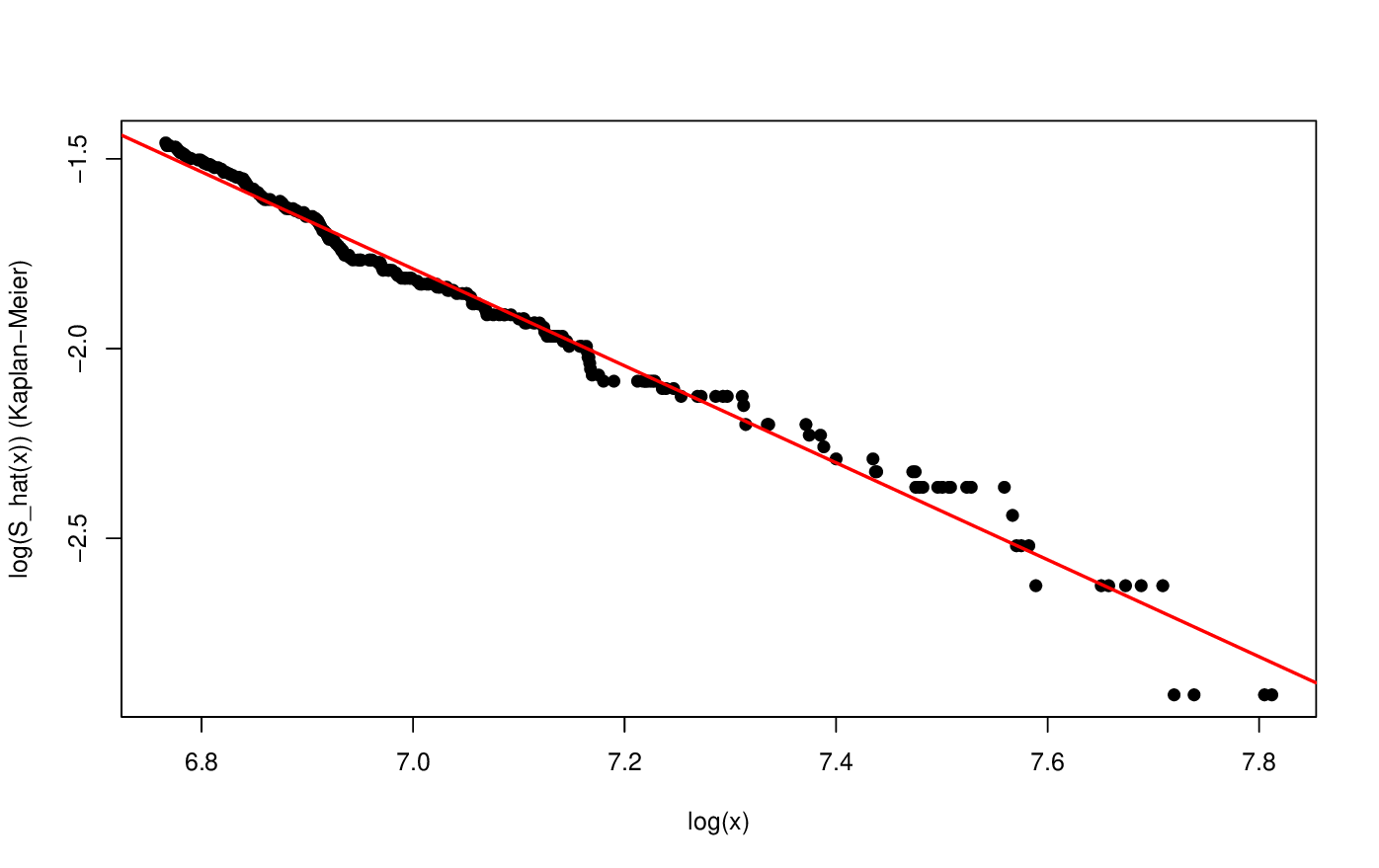}%
\caption{Log--log survival plot based on the Nelson--Alen estimator for the
Australian AIDS survival data. The approximately linear behavior observed in
the upper tail, together with the negative slope, indicates compatibility with
a Pareto-type heavy-tailed model.}%
\label{aids-1}%
\end{figure}

\noindent Using the adaptive selection procedure of Reiss and Thomas
\citep{ReTo97}, the optimal sample fraction for estimating the censoring
proportion is $k_{\mathrm{opt}}=162$, yielding $\widehat{p}_{k}=0.30$. This
clearly indicates that the dataset falls within the strong censoring regime,
where classical asymptotic results are no longer guaranteed. \smallskip

\noindent For the adapted Hill estimator, the optimal threshold is $k=55$,
leading to the estimate $\widehat{\gamma}_{1,k}^{(\mathrm{EFG})}=0.72$. The
same threshold is selected for the classical Nelson--Aalen estimator
$\widehat{\gamma}_{1,k}^{(\mathrm{MNS})}$, but the resulting estimate is
significantly lower, $\widehat{\gamma}_{1,k}^{(\mathrm{MNS})}=0.15$.
\smallskip

\noindent This pronounced discrepancy highlights the severe instability of
classical estimators under strong censoring, where the effective number of
uncensored extreme observations becomes too small to ensure reliable
inference. In particular, the Nelson--Aalen estimator exhibits a substantial
downward bias in this setting.\smallskip

\noindent In contrast, applying the proposed estimator with $\beta=1.01$
yields a substantially larger optimal threshold, $k=275$, and the estimate
$\widehat{\gamma}_{1,k}^{(\mathrm{NA,tr})}(\beta)=0.64$.\smallskip

\noindent Two important features emerge from this result. First, the estimator
remains stable despite the strong censoring level, producing a value that is
coherent with the adapted Hill estimator while avoiding the severe
underestimation observed for the classical Nelson--Aalen estimator. This
confirms that the proposed method effectively corrects the bias induced by
censoring. Second, the larger selected value of $k$ indicates that the
proposed method is able to exploit a wider portion of the upper tail,
reflecting its improved robustness with respect to censoring. This ability to
use more extreme observations is a key practical advantage.\smallskip

\noindent These observations are fully consistent with the theoretical results
established in Section~\ref{sec2}, which guarantee the validity of the
estimator over the entire censoring range $0<p<1$. In particular, the
combination of weighting and truncation effectively stabilizes the tail
empirical process, even when the proportion of uncensored observations is
small.\smallskip

\noindent Overall, this example clearly illustrates the main advantage of the
proposed estimator: it provides reliable and stable tail index estimation
precisely in the regime where classical methods break down.

\section{\textbf{Proofs}\label{sec5}}

\subsection{Proof of Theorem \ref{theorem1}}

\noindent The proof follows the Gaussian approximation scheme developed in
\cite{MNS2025} for the Nelson--Aalen tail process. We recall only the
modifications required by the truncation and the weighting.\smallskip

\noindent Let
\[
D_{n}(x):=\sqrt{k}\left\{  \frac{\overline{F}_{n}^{(\mathrm{NA})}(xZ_{n-k:n}%
)}{\overline{F}_{n}^{(\mathrm{NA})}(Z_{n-k:n})}-x^{-1/\gamma_{1}}\right\}
,\qquad x\geq1.
\]
Under the first- and second-order regular variation assumptions, the classical
decomposition of the Nelson--Aalen tail process gives
\[
D_{n}(x)=J_{n}(x)+x^{-1/\gamma_{1}}\frac{x^{\tau_{1}/\gamma_{1}}-1}{\tau
_{1}\gamma_{1}}\sqrt{k}A_{1}(h)+R_{n}(x),
\]
where $J_{n}(x)$ is the Gaussian term defined in the statement of the theorem
and $R_{n}(x)$ is the corresponding remainder.\smallskip

\noindent In the untruncated case, the control of $R_{n}(x)$ involves the
lower boundary of the transformed empirical process and produces a boundary
contribution of order $k^{1/2-p}$, which is negligible only when $p>1/2$. In
the present setting, the analysis is restricted to
\[
1\leq x\leq T_{n,a}:=\frac{u_{n}}{Z_{n-k:n}},
\]
where $u_{n}=Q^{(1)}(a_{n})$ and
\[
a_{n}\rightarrow0,\qquad na_{n}\rightarrow\infty,\qquad a_{n}=o(k/n).
\]
Hence the transformed domain is bounded away from zero by a quantity of order
$a_{n}$. Consequently, the lower-boundary contribution appearing in the
untruncated proof is removed. The same decomposition therefore yields, for
some $1/4<\eta<1/2$ and any sufficiently small $\varepsilon_{0}>0$,
\[
R_{n}(x)=o_{\mathbf{P}}\!\left(  x^{(2\eta-p)/\gamma+\varepsilon_{0}}\right)
,\qquad\text{uniformly for }1\leq x\leq T_{n,a}.
\]
Now define the weighted and truncated process by
\[
D_{n,a}^{(\beta)}(x)=x^{-\beta/\gamma}D_{n}(x),\qquad1\leq x\leq T_{n,a}.
\]
Multiplying the preceding expansion by $x^{-\beta/\gamma}$ gives
\[
D_{n,a}^{(\beta)}(x)=x^{-\beta/\gamma}J_{n}(x)+x^{-\beta/\gamma}%
x^{-1/\gamma_{1}}\frac{x^{\tau_{1}/\gamma_{1}}-1}{\tau_{1}\gamma_{1}}\sqrt
{k}A_{1}(h)+x^{-\beta/\gamma}R_{n}(x).
\]
It remains to control the last term. Since
\[
R_{n}(x)=o_{\mathbf{P}}\!\left(  x^{(2\eta-p)/\gamma+\varepsilon_{0}}\right)
\]
uniformly on the truncated domain, we have
\[
x^{-\beta/\gamma}R_{n}(x)=o_{\mathbf{P}}\!\left(  x^{(2\eta-p-\beta
)/\gamma+\varepsilon_{0}}\right)  .
\]
Choose $\eta$ and $\varepsilon_{0}$ such that
\[
\frac{2\eta-p-\beta}{\gamma}+\varepsilon_{0}<0.
\]
This is possible because $\beta>1$ and $2\eta<1$. Hence, for any sufficiently
small $\varepsilon>0$,
\[
\sup_{1\leq x\leq T_{n,a}}x^{\varepsilon}\left\vert x^{-\beta/\gamma}%
R_{n}(x)\right\vert =o_{\mathbf{P}}(1).
\]
Therefore,
\[
\sup_{1\leq x\leq T_{n,a}}x^{\varepsilon}\left\vert D_{n,a}^{(\beta
)}(x)-x^{-\beta/\gamma}J_{n}(x)-x^{-\beta/\gamma}x^{-1/\gamma_{1}}%
\frac{x^{\tau_{1}/\gamma_{1}}-1}{\tau_{1}\gamma_{1}}\sqrt{k}A_{1}%
(h)\right\vert =o_{\mathbf{P}}(1).
\]
This completes the proof.

\subsection{Proof of Theorem \ref{theorem2}}

\noindent We prove the consistency of the weighted and truncated Nelson--Aalen
estimator $\widehat{\gamma}_{1,k}^{(\mathrm{NA,tr})}(\beta)$. Set
\[
Z_{k}:=Z_{n-k:n},\qquad T_{n,a}:=\frac{u_{n}}{Z_{k}},\qquad\alpha_{n}%
:=\frac{\beta}{\widehat{p}_{k}},
\]
and define, for $1\leq x\leq T_{n,a}$,
\[
Q_{n}(x):=\frac{\overline{F}_{n}^{(\mathrm{NA})}(xZ_{k})}{\overline{F}%
_{n}^{(\mathrm{NA})}(Z_{k})}.
\]
By Proposition~\ref{prop:app-functional},
\begin{equation}
\widehat{\gamma}_{1,k}^{(\mathrm{NA,tr})}(\beta)=\alpha_{n}\int_{1}^{T_{n,a}%
}x^{-1}\left(  Q_{n}(x)\right)  ^{\alpha_{n}}\,dx-\alpha_{n}(\log
T_{n,a})\left(  Q_{n}(T_{n,a})\right)  ^{\alpha_{n}}.
\label{eq:consistency-representation}%
\end{equation}
Since $\widehat{p}_{k}\overset{\mathbf{P}}{\longrightarrow}p$, we have
\[
\alpha_{n}=\frac{\beta}{\widehat{p}_{k}}\overset{\mathbf{P}}{\longrightarrow
}\frac{\beta}{p}.
\]
For each fixed $x\geq1$, Lemma~\ref{lemma:power-convergence-alpha} gives
\[
\left(  Q_{n}(x)\right)  ^{\alpha_{n}}\overset{\mathbf{P}}{\longrightarrow
}x^{-\beta/\gamma}.
\]
Moreover, by Lemma~\ref{lem:domination-Qn}, for any sufficiently small
$\varepsilon_{0}>0$, there exists a constant $C>0$ such that, with probability
tending to one,
\[
x^{-1}\left(  Q_{n}(x)\right)  ^{\alpha_{n}}\leq Cx^{-1-\beta/\gamma
+\varepsilon_{0}},\qquad1\leq x\leq T_{n,a}.
\]
Choosing $\varepsilon_{0}$ sufficiently small, the function $x\mapsto
x^{-1-\beta/\gamma+\varepsilon_{0}}$ is integrable on $[1,\infty)$. Therefore,
by a standard truncation argument combined with dominated convergence on
compact intervals,
\[
\alpha_{n}\int_{1}^{T_{n,a}}x^{-1}\left(  Q_{n}(x)\right)  ^{\alpha_{n}%
}\,dx\overset{\mathbf{P}}{\longrightarrow}\frac{\beta}{p}\int_{1}^{\infty
}x^{-1-\beta/\gamma}\,dx.
\]
Since
\[
\frac{\beta}{p}\int_{1}^{\infty}x^{-1-\beta/\gamma}\,dx=\frac{\beta}{p}%
\cdot\frac{\gamma}{\beta}=\frac{\gamma}{p}=\gamma_{1},
\]
we obtain
\begin{equation}
\alpha_{n}\int_{1}^{T_{n,a}}x^{-1}\left(  Q_{n}(x)\right)  ^{\alpha_{n}%
}\,dx\overset{\mathbf{P}}{\longrightarrow}\gamma_{1}.
\label{eq:consistency-main}%
\end{equation}

It remains to control the boundary term. Since
\[
Q_{n}(T_{n,a})=\frac{\overline{F}_{n}^{(\mathrm{NA})}(u_{n})}{\overline{F}%
_{n}^{(\mathrm{NA})}(Z_{k})},
\]
Potter-type bounds and the consistency of the Nelson--Aalen tail ratio imply
\[
Q_{n}(T_{n,a})=O_{\mathbf{P}}\!\left[  \left(  \frac{na_{n}}{k}\right)
^{p}\right]  .
\]
Since $\alpha_{n}\overset{\mathbf{P}}{\longrightarrow}\beta/p$, it follows
that
\[
\left(  Q_{n}(T_{n,a})\right)  ^{\alpha_{n}}=O_{\mathbf{P}}\!\left[  \left(
\frac{na_{n}}{k}\right)  ^{\beta}\right]  .
\]
Furthermore,
\[
\log T_{n,a}=O_{\mathbf{P}}\!\left[  \log\!\left(  \frac{k}{na_{n}}\right)
\right]  .
\]
Thus,
\[
\alpha_{n}(\log T_{n,a})\left(  Q_{n}(T_{n,a})\right)  ^{\alpha_{n}%
}=O_{\mathbf{P}}\!\left[  \left(  \frac{na_{n}}{k}\right)  ^{\beta}%
\log\!\left(  \frac{k}{na_{n}}\right)  \right]  =o_{\mathbf{P}}(1),
\]
because $na_{n}/k\rightarrow0$ and the logarithmic factor is dominated by any
negative power of $na_{n}/k$. Hence,
\begin{equation}
\alpha_{n}(\log T_{n,a})\left(  Q_{n}(T_{n,a})\right)  ^{\alpha_{n}%
}=o_{\mathbf{P}}(1). \label{eq:consistency-boundary}%
\end{equation}
Combining \eqref{eq:consistency-representation}, \eqref{eq:consistency-main},
and \eqref{eq:consistency-boundary}, we conclude that
\[
\widehat{\gamma}_{1,k}^{(\mathrm{NA,tr})}(\beta)\overset{\mathbf{P}%
}{\longrightarrow}\gamma_{1}.
\]
This completes the proof.

\subsection{\textbf{Proof of Theorem} \ref{theorem3}}

\noindent By Proposition~\ref{prop:linearization}, we have
\[
\sqrt{k}\left(  \widehat{\gamma}_{1,k}^{(\mathrm{NA,tr})}(\beta)-\gamma
_{1}\right)  =\left(  \frac{\beta}{p}\right)  ^{2}\int_{1}^{T_{n,a}%
}x^{-1+1/\gamma_{1}}D_{n,a}^{(\beta)}(x)\,dx+B_{n}+o_{\mathbf{P}}(1),
\]
where
\[
B_{n}=\sqrt{k}\left[  \frac{\beta}{p}\int_{1}^{T_{n,a}}x^{-1}\left(
\frac{\overline{F}(Z_{k}x)}{\overline{F}(Z_{k})}\right)  ^{\beta/p}%
dx-\gamma_{1}\right]  .
\]
By the second-order regular variation condition \eqref{second-orderF},
\[
B_{n}=\frac{\beta}{\beta-p\tau_{1}}\sqrt{k}A_{1}(h)+o_{\mathbf{P}}(1),\qquad
h=U_{H}(n/k).
\]
It remains to analyze the stochastic integral. By Theorem~\ref{theorem1},
uniformly for $1\leq x\leq T_{n,a}$,
\[
D_{n,a}^{(\beta)}(x)=x^{-\beta/\gamma}J_{n}(x)+x^{-\beta/\gamma}%
x^{-1/\gamma_{1}}\frac{x^{\tau_{1}/\gamma_{1}}-1}{\tau_{1}\gamma_{1}}\sqrt
{k}A_{1}(h)+r_{n}(x),
\]
where
\[
\sup_{1\leq x\leq T_{n,a}}x^{\varepsilon}|r_{n}(x)|=o_{\mathbf{P}}(1).
\]
The contribution of $r_{n}$ to the integral is therefore $o_{\mathbf{P}}%
(1)$.\smallskip

\noindent Consequently,
\[
\left(  \frac{\beta}{p}\right)  ^{2}\int_{1}^{T_{n,a}}x^{-1+1/\gamma_{1}%
}D_{n,a}^{(\beta)}(x)\,dx=\left(  \frac{\beta}{p}\right)  ^{2}\int%
_{1}^{T_{n,a}}x^{-1-(\beta-p)/\gamma}J_{n}(x)\,dx+o_{\mathbf{P}}(1).
\]
By Proposition~\ref{prop:app-gaussian},
\[
\left(  \frac{\beta}{p}\right)  ^{2}\int_{1}^{T_{n,a}}x^{-1-(\beta-p)/\gamma
}J_{n}(x)\,dx=\gamma_{1}\mathcal{N}_{n}(\beta)+o_{\mathbf{P}}(1),
\]
where
\[
\mathcal{N}_{n}(\beta)=\frac{\beta(p+\beta-1)}{p}\int_{0}^{1}s^{\beta
-2}\mathbf{W}_{n,1}(s)\,ds+\beta\int_{0}^{1}s^{\beta-2}\mathbf{W}%
_{n,2}(s)\,ds-\frac{\beta}{p}\mathbf{W}_{n,1}(1).
\]
Moreover,
\[
\mathcal{N}_{n}(\beta)\overset{\mathcal{D}}{\longrightarrow}\mathcal{N}\left(
0,\frac{1}{p}\frac{\beta^{2}}{2\beta-1}\right)  .
\]
Combining the preceding estimates, we obtain
\[
\sqrt{k}\left(  \widehat{\gamma}_{1,k}^{(\mathrm{NA,tr})}(\beta)-\gamma
_{1}\right)  =\gamma_{1}\mathcal{N}_{n}(\beta)+\frac{\beta}{\beta-p\tau_{1}%
}\sqrt{k}A_{1}(h)+o_{\mathbf{P}}(1).
\]
If $\sqrt{k}A_{1}(h)\rightarrow\lambda$, then Slutsky's lemma yields
\[
\sqrt{k}\left(  \widehat{\gamma}_{1,k}^{(\mathrm{NA,tr})}(\beta)-\gamma
_{1}\right)  \overset{\mathcal{D}}{\longrightarrow}\mathcal{N}\left(
\mu_{\beta},\sigma_{\beta}^{2}\right)  ,
\]
where
\[
\mu_{\beta}=\frac{\lambda\beta}{\beta-p\tau_{1}},\qquad\sigma_{\beta}%
^{2}=\frac{1}{p}\frac{\beta^{2}}{2\beta-1}\gamma_{1}^{2}.
\]
This completes the proof.

\section{\textbf{Conclusion}\label{sec6}}

\noindent This paper introduces a new class of weighted and truncated tail
index estimators under random right censoring, based on a modified
Nelson--Aalen integral representation. The proposed estimator $\widehat{\gamma
}_{1,k}^{(\mathrm{NA,tr})}(\beta)$ is specifically designed to overcome a
fundamental limitation of existing approaches, namely their restriction to the
weak censoring regime $p>1/2$. In particular, the methodology addresses a
longstanding gap in the literature by providing a theoretically sound and
practically implementable solution under arbitrary censoring levels.
\smallskip

\noindent A central aspect of the proposed approach is the introduction of a
weighted and truncated Nelson--Aalen tail empirical process, which constitutes
the fundamental object of the analysis. The estimator is constructed as a
functional of this process, allowing its asymptotic behavior to be directly
derived from the stochastic properties of the underlying empirical structure.
This process-level perspective represents a conceptual shift from classical
approaches, which typically focus on modifying the estimator itself rather
than its underlying stochastic representation. \smallskip

\noindent The main theoretical contribution lies in establishing a uniform
Gaussian approximation for this weighted and truncated tail empirical process
over the entire censoring range $0<p<1$. This is achieved through a principled
combination of truncation and weighting: truncation removes the pathological
contribution arising from the lower part of the transformed scale, while
weighting ensures uniform control of stochastic fluctuations. As a
consequence, the proposed estimator is consistent and asymptotically normal
for all censoring levels, thereby extending classical extreme value theory
beyond the weak censoring framework. Importantly, this result shows that the
loss of asymptotic normality under strong censoring is not intrinsic, but can
be overcome through an appropriate regularization of the tail empirical
process. \smallskip

\noindent A key feature of the approach is that, under standard first- and
second-order regular variation conditions on $F$ and the usual intermediate
sequence conditions $k\rightarrow\infty$ and $k/n\rightarrow0$, the estimator
admits a linearization as a functional of the weighted and truncated tail
process. The truncation level is chosen through a sequence $m_{n}$ satisfying
$m_{n}\rightarrow\infty$ and $m_{n}/k\rightarrow0$, with typical choices
including slowly varying rates such as $m_{n}=\lfloor\log\log k\rfloor$ or
polynomial rates $m_{n}=\lfloor k^{\varepsilon}\rfloor$ with $0<\varepsilon
<1$. This representation provides a transparent connection between the
stochastic structure of the process and the asymptotic behavior of the
estimator, and plays a central role in establishing both consistency and
asymptotic normality. In particular, the use of extremely slow truncation
rates ensures that the method remains practically negligible while being
asymptotically essential. \smallskip

\noindent From a methodological perspective, the proposed estimator offers a
unified treatment of weak and strong censoring regimes. In particular,
choosing the tuning parameter $\beta$ close to $1$ (e.g., $\beta=1.01$) yields
a stable and practically effective procedure while preserving the theoretical
guarantees. In the weak censoring case, the estimator is asymptotically
equivalent to the classical Nelson--Aalen estimator, ensuring full
compatibility with existing methods. This highlights that the proposed
approach does not replace classical estimators, but rather extends them in a
natural and coherent way. \smallskip

\noindent The simulation study confirms the theoretical findings and shows
that the proposed estimator performs favorably across a wide range of models
and censoring scenarios. In particular, it exhibits improved stability and
reduced mean squared error compared to existing Nelson--Aalen-based
estimators, especially under moderate and strong censoring. These results are
further supported by real data applications, where the estimator provides
reliable tail index estimates in challenging settings. This empirical evidence
confirms that the theoretical advantages of the method translate into
practical performance gains. \smallskip

\noindent From an asymptotic efficiency viewpoint, the estimator achieves a
smaller limiting variance than the classical Nelson--Aalen estimator under
weak censoring, while remaining applicable over the full censoring range.
Although its variance may exceed that of the adapted Hill estimator, its
overall performance in terms of mean squared error is often superior,
reflecting a favorable bias--variance trade-off. This illustrates the
effectiveness of the weighting mechanism in balancing robustness and
efficiency. \smallskip

\noindent The theoretical results are established under general first- and
second-order regular variation conditions, without relying on restrictive
parametric assumptions such as Hall-type models. This ensures applicability to
a broad class of heavy-tailed distributions encountered in practice.
Consequently, the proposed framework retains a high level of generality while
achieving strong asymptotic guarantees. \smallskip

\noindent Several directions for future research naturally arise. In
particular, incorporating bias-reduction techniques within the present
framework could further improve performance. Another promising direction is
the development of data-driven procedures for the joint selection of the
tuning parameter $\beta$ and the threshold $k$. More broadly, the weighted and
truncated tail empirical process introduced in this paper provides a flexible
basis for further developments in extreme value analysis under censoring.
Extensions to dependent data or multivariate extreme value settings also
constitute promising directions for future investigation. \smallskip

\noindent In summary, the proposed approach shows that controlling the tail
empirical process itself, rather than only the estimator, provides a powerful
and flexible strategy for handling censoring in extreme value theory. The
joint use of weighting and truncation restores a unified and robust asymptotic
framework under random censoring, effectively bridging the gap between
theoretical validity and practical applicability. This work thus opens the way
for a new class of process-based methodologies in censored extreme value analysis.

\section{\textbf{Declarations}}

\begin{itemize}
\item The authors have no findings to declare.

\item The authors have no relevant financial or non-financial interests to disclose.

\item The authors have no competing interests, that are relevant to the
content of this article, to declare.

\item All authors certify that they have no affiliations with or involvement
in any organization or entity with any financial interest or non-financial
interest in the subject matter or materials discussed in this manuscript.

\item The authors have no financial or proprietary interests in any material
discussed in this article.
\end{itemize}

\renewcommand{\refname}{

\section{\textbf{Appendix A: Auxiliary Results}}

\noindent In this section, we collect several auxiliary results that form the
technical foundation of the paper. For the sake of readability, only the
statements are presented here, while all detailed proofs are deferred to the
Supplementary Material.\smallskip

\noindent The results are organized according to their role in the analysis.
We first establish convergence properties and uniform bounds for the
Nelson--Aalen tail ratio and its nonlinear transformations. These preliminary
lemmas ensure the stability of the weighted structure and provide the
analytical tools required for handling powers of the empirical tail process.
We then derive structural representations of the estimator, which allow us to
express it as a functional of the underlying tail process. Next, we establish
analytical and probabilistic bounds for the stochastic components involved,
before controlling the boundary contribution induced by truncation. Finally,
we obtain a linearization of the estimator and identify the Gaussian limit
that governs its asymptotic behavior.\smallskip

\noindent Lemmas~\ref{lemma:weighted-power-limit} and
\ref{lemma:power-convergence-alpha} establish the convergence of powers of the
Nelson--Aalen tail ratio, both for fixed exponents and for random exponents
arising from the estimation of $p$. These results ensure that the nonlinear
transformations induced by the weighting scheme are asymptotically well
behaved.\smallskip

\noindent Lemma~\ref{lem:domination-Qn} provides uniform bounds for the tail
ratio and its powers over the truncated domain. This result is crucial for
establishing uniform integrability and for controlling the behavior of the
integrand in the functional representation of the estimator.\smallskip

\noindent Proposition~\ref{prop:app-tailratio} provides a uniform
approximation of the tail ratio under the regular variation assumption. This
result serves as a key ingredient in Proposition~\ref{prop:app-identity},
where it is used to derive an integral characterization of the extreme value
index adapted to the truncated setting.\smallskip

\noindent Proposition~\ref{prop:app-identity} establishes an integral
characterization of the extreme value index under truncation. This result
extends the classical identity to the weighted framework and plays a central
role in linking the estimator to its asymptotic target.\smallskip

\noindent Proposition~\ref{prop:app-functional} provides an exact functional
representation of the weighted and truncated Nelson--Aalen estimator. This
result is fundamental, as it expresses the estimator in terms of the empirical
tail process and serves as the starting point for all subsequent asymptotic
analyses.\smallskip

\noindent Proposition~\ref{prop:app-stochastic} establishes uniform stochastic
bounds for the processes $J_{n}(x)$ and $L_{n}(x)$ involved in the Gaussian
approximation of the weighted Nelson--Aalen tail process. These bounds ensure
uniform control of the stochastic fluctuations over the truncated
domain.\smallskip

\noindent Lemma~\ref{lem:boundary-truncation} controls the contribution of the
boundary term induced by truncation and shows that it is asymptotically
negligible under suitable conditions on the truncation sequence. This result
ensures that the truncation does not affect the asymptotic behavior of the
estimator.\smallskip

\noindent Proposition~\ref{prop:linearization} establishes a linearization of
the estimator at the $\sqrt{k}$ scale. It shows that the estimation error can
be decomposed into a linear functional of the weighted and truncated tail
process $D_{n,a}^{(\beta)}$, a deterministic bias term, and a negligible
remainder. This representation plays a central role in both the consistency
and asymptotic normality results.\smallskip

\noindent Proposition~\ref{prop:app-gaussian} identifies the Gaussian
component $\mathcal{N}_{n}(\beta)$ arising from the linearization and provides
its explicit variance. This result is crucial for deriving the asymptotic
normality of the estimator.\smallskip

\noindent Together, these auxiliary results provide a complete analytical and
probabilistic framework for establishing the consistency, stochastic
expansions, and asymptotic normality of the proposed estimators. In
particular, they highlight how the combination of truncation and weighting
restores a regular asymptotic structure over the full censoring range $0<p<1$.

\begin{proposition}
\label{prop:app-tailratio} Assume that the survival function $\overline{F}$
satisfies the first-order regular variation condition \eqref{RVF}. Then, for
every $\varepsilon>0$, there exists $t_{0}=t_{0}(\varepsilon)>0$ such that,
for all $t\geq t_{0}$ and all $y\geq1$ satisfying $ty\geq t_{0}$,
\[
\left|  \frac{\overline{F}(ty)}{\overline{F}(t)} - y^{-1/\gamma_{1}} \right|
\leq\varepsilon\max\!\left(  y^{-1/\gamma_{1}+\varepsilon}, y^{-1/\gamma
_{1}-\varepsilon} \right)  .
\]

\end{proposition}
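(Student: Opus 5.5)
This is the Potter inequality for regularly varying functions, and I would derive it from the representation theorem for slowly varying functions. By \eqref{RVF}, write $\overline{F}(x)=x^{-1/\gamma_{1}}\ell(x)$ with $\ell$ slowly varying. Karamata's representation theorem then gives, for $x\geq x_{1}$,
\[
\ell(x)=c(x)\exp\Big\{\int_{x_{1}}^{x}\frac{\epsilon(s)}{s}\,ds\Big\},
\]
where $c(x)\to c\in(0,\infty)$ and $\epsilon(s)\to0$. Hence
\[
\frac{\overline{F}(ty)}{\overline{F}(t)}=y^{-1/\gamma_{1}}\frac{c(ty)}{c(t)}\exp\Big\{\int_{t}^{ty}\frac{\epsilon(s)}{s}\,ds\Big\},
\]
and the problem reduces to bounding $\big|\frac{c(ty)}{c(t)}e^{I(t,y)}-1\big|$ by $\varepsilon y^{\varepsilon}$, where $I(t,y):=\int_{t}^{ty}\epsilon(s)s^{-1}ds$. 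Since $y\geq1$, the maximum on the right-hand side of the statement equals $y^{-1/\gamma_{1}+\varepsilon}$, so this reduction is exactly what is needed.

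The key steps are as follows.

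\textbf{Fixing constants.} Given $\varepsilon\in(0,1)$, choose an auxiliary $\delta>0$ small, say $\delta\le\varepsilon/4$, and then $t_{0}$ large enough that $|\epsilon(s)|\le\delta$ and $|c(s)/c-1|\le\delta$ for all $s\geq t_{0}$.

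\textbf{Bounding the two factors.} For $t\geq t_{0}$ and $y\geq1$ we have $|I(t,y)|\le\delta\log y$ and $c(ty)/c(t)\in[(1-\delta)/(1+\delta),(1+\delta)/(1-\delta)]$. Therefore the ratio $c(ty)/c(t)\,e^{I(t,y)}$ lies between $(1-3\delta)y^{-\delta}$ and $(1+3\delta)y^{\delta}$ for small $\delta$.

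\textbf{Reducing to an elementary inequality.} It remains to check that $|(1\pm3\delta)y^{\pm\delta}-1|\le\varepsilon y^{\varepsilon}$ for all $y\geq1$. I would use
\[
y^{\delta}-1\le\delta\,y^{\delta}\log y\le\frac{\delta}{\varepsilon-\delta}\,y^{\varepsilon},
\]
which follows from $\log y\le y^{\varepsilon-\delta}/(\varepsilon-\delta)$. The lower side is similar, using $1-y^{-\delta}\le\delta\log y$.

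\textbf{Main obstacle.} The hard step is this uniform handling of large $y$. A bounded error $\varepsilon$ cannot control the drift $y^{\pm\delta}$ over an unbounded range of $y$, so the $y^{\varepsilon}$ slack in the statement is essential. The fix is to take $\delta$ much smaller than $\varepsilon$, for instance $\delta=\varepsilon^{2}/8$. Then $\delta/(\varepsilon-\delta)+3\delta\le\varepsilon$, and both bounds close with constant $\varepsilon$, using $y^{\varepsilon}\geq1$ to absorb the additive $3\delta$ term.

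The condition $ty\geq t_{0}$ in the statement is automatic once $t\geq t_{0}$ and $y\geq1$.
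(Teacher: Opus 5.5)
Your proof is correct and takes the same route as the paper. The paper's proof only cites Potter's inequality (Proposition B.1.10 of de Haan and Ferreira), and your argument via Karamata's representation, including the necessary choice $\delta=O(\varepsilon^{2})$, is the standard proof of that cited result. The one case you leave implicit, $\varepsilon\geq1$, follows from the case $\varepsilon=1/2$, since for $y\geq1$ the right-hand side is nondecreasing in $\varepsilon$.
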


\begin{proof}
This follows directly from the Potter-type bounds for regularly varying
functions; see Proposition~B.1.10 in \cite{deHF06}.
\end{proof}

\begin{proposition}
\label{prop:app-identity} Assume that the survival function $\overline{F}$
satisfies the first-order regular variation condition \eqref{RVF}. Let $u_{t}$
be a truncation level such that $u_{t}/t \to\infty$ as $t \to\infty$. Then,
for every $\beta>0$,
\begin{equation}
\frac{\beta}{p} \int_{1}^{u_{t}/t} y^{-1} \left(  \frac{\overline{F}%
(ty)}{\overline{F}(t)} \right)  ^{\beta/p} \,dy \longrightarrow\gamma_{1},
\qquad t \to\infty. \label{eq:app-identity-y}%
\end{equation}
Equivalently,
\begin{equation}
\left(  \frac{\beta}{p}\right)  ^{2} \int_{t}^{u_{t}} \left(  \frac
{\overline{F}(y)}{\overline{F}(t)} \right)  ^{\beta/p-1} \log\!\left(
\frac{y}{t}\right)  \,d\!\left(  \frac{F(y)}{\overline{F}(t)} \right)
\longrightarrow\gamma_{1}, \qquad t \to\infty. \label{eq:app-identity-t}%
\end{equation}

\end{proposition}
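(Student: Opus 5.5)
The plan is to prove \eqref{eq:app-identity-y} by dominated convergence, using Proposition~\ref{prop:app-tailratio} both for the pointwise limit and for an integrable majorant. Then I will derive \eqref{eq:app-identity-t} from it by a Stieltjes integration by parts, in which the boundary term vanishes for the same Potter-type reason.

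\textbf{Target value.} Since $p=\gamma/\gamma_{1}$, we have $(\beta/p)(1/\gamma_{1})=\beta/\gamma$. Hence the natural limit is
\[
\frac{\beta}{p}\int_{1}^{\infty}y^{-1-\beta/\gamma}\,dy=\frac{\beta}{p}\cdot\frac{\gamma}{\beta}=\frac{\gamma}{p}=\gamma_{1}.
\]

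\textbf{Proof of \eqref{eq:app-identity-y}.} I write the integral as $\int_{1}^{\infty}f_{t}(y)\,dy$ with
\[
f_{t}(y):=\mathbb{I}_{\{y\leq u_{t}/t\}}\,y^{-1}\left(\frac{\overline{F}(ty)}{\overline{F}(t)}\right)^{\beta/p}.
\]
\emph{Pointwise limit.} Fix $y\geq1$. Since $u_{t}/t\to\infty$, the indicator equals $1$ eventually. By \eqref{RVF} and continuity of $s\mapsto s^{\beta/p}$, we get $f_{t}(y)\to y^{-1-\beta/\gamma}$.

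\emph{Majorant.} Fix $0<\varepsilon<\min(1,1/\gamma_{1})$ so small that $\varepsilon\beta/p<\beta/\gamma$. Take $t\geq t_{0}(\varepsilon)$; then $ty\geq t_{0}$ for all $y\geq1$. Proposition~\ref{prop:app-tailratio} gives
\[
0\leq\frac{\overline{F}(ty)}{\overline{F}(t)}\leq y^{-1/\gamma_{1}}+\varepsilon y^{-1/\gamma_{1}+\varepsilon}\leq2y^{-1/\gamma_{1}+\varepsilon}.
\]
Therefore $0\leq f_{t}(y)\leq2^{\beta/p}y^{-1-\beta/\gamma+\varepsilon\beta/p}$, which is integrable on $[1,\infty)$. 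Dominated convergence then yields \eqref{eq:app-identity-y}.

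\textbf{Equivalence with \eqref{eq:app-identity-t}.} Set $\psi(y):=\overline{F}(y)/\overline{F}(t)$. Then $d\big(F(y)/\overline{F}(t)\big)=-d\psi(y)$. Because $F$ is continuous, the Stieltjes chain rule applies:
\[
\frac{\beta}{p}\,\psi^{\beta/p-1}\,d\psi=d\big(\psi^{\beta/p}\big).
\]
Hence the left side of \eqref{eq:app-identity-t} equals $-\frac{\beta}{p}\int_{t}^{u_{t}}\log(y/t)\,d\big(\psi^{\beta/p}\big)$. Integrating by parts against the continuous, locally bounded-variation function $\log(y/t)$ gives
\[
-\frac{\beta}{p}\log\!\Big(\frac{u_{t}}{t}\Big)\psi(u_{t})^{\beta/p}+\frac{\beta}{p}\int_{t}^{u_{t}}\psi(y)^{\beta/p}\,\frac{dy}{y}.
\]
The term at $y=t$ vanishes because $\log1=0$. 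The substitution $y=tx$ turns the last integral into exactly the left side of \eqref{eq:app-identity-y}. For the boundary term, put $s_{t}:=u_{t}/t\to\infty$. The same Potter bound gives
\[
0\leq\log(s_{t})\,\psi(u_{t})^{\beta/p}\leq2^{\beta/p}\log(s_{t})\,s_{t}^{-\beta/\gamma+\varepsilon\beta/p}\to0.
\]
So the two limits coincide.

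\textbf{Main obstacle.} The only delicate point is the uniform majorant. Proposition~\ref{prop:app-tailratio} must hold for all $y\geq1$ simultaneously once $t\geq t_{0}$, and $\varepsilon$ must be chosen so that the exponent stays below $-1$. Once these are in place, the argument is routine and needs only $\beta>0$; it does not require $\beta>1$ or any restriction on $p$.
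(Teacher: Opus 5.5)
Your proof is correct, and for \eqref{eq:app-identity-y} it follows the paper's route: a Potter-type majorant $C\,y^{-1-\beta/\gamma+\varepsilon\beta/p}$ from Proposition~\ref{prop:app-tailratio}, the pointwise limit from \eqref{RVF}, and dominated convergence. Your indicator $\mathbb{I}_{\{y\le u_t/t\}}$ simply makes explicit the paper's looser ``truncation argument plus dominated convergence on compact intervals''.

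The real addition is your integration-by-parts derivation of \eqref{eq:app-identity-t}, including the vanishing boundary term $\frac{\beta}{p}\log(u_t/t)\,\psi(u_t)^{\beta/p}$. The paper's proof stops after \eqref{eq:app-identity-y} and never justifies the ``equivalently'' claim; the analogous computation appears only for the empirical version in Proposition~\ref{prop:app-functional}. Your argument therefore fills that gap. You should add one remark: $\psi\ge\psi(u_t)>0$ on $[t,u_t]$, which holds because $\overline{F}>0$ under \eqref{RVF}. This is needed for the Stieltjes chain rule when $\beta/p<1$.
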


\begin{proof}
Set $a:=\beta/p.$ By Proposition~\ref{prop:app-tailratio}, for every
$\varepsilon>0$, there exists $t_{0}>0$ such that, for all $t\geq t_{0}$ and
$ty\geq t_{0}$,
\[
\left\vert \frac{\overline{F}(ty)}{\overline{F}(t)}-y^{-1/\gamma_{1}%
}\right\vert \leq\varepsilon\max\left(  y^{-1/\gamma_{1}+\varepsilon
},\,y^{-1/\gamma_{1}-\varepsilon}\right)  .
\]
In particular, for $y\geq1$ and $t\geq t_{0}$, this implies
\[
\frac{\overline{F}(ty)}{\overline{F}(t)}\leq y^{-1/\gamma_{1}}+\varepsilon
y^{-1/\gamma_{1}+\varepsilon}\leq C\,y^{-1/\gamma_{1}+\varepsilon},
\]
for some constant $C>0$. Hence,
\[
y^{-1}\left(  \frac{\overline{F}(ty)}{\overline{F}(t)}\right)  ^{a}\leq
C\,y^{-1-a/\gamma_{1}+a\varepsilon}.
\]
Choosing $\varepsilon>0$ sufficiently small so that
\[
\frac{a}{\gamma_{1}}-a\varepsilon>0,
\]
the exponent $-1-a/\gamma_{1}+a\varepsilon$ is strictly less than $-1$, and
thus the function $y\mapsto y^{-1-a/\gamma_{1}+a\varepsilon}$ is integrable on
$[1,\infty)$. On the other hand, by regular variation,
\[
\left(  \frac{\overline{F}(ty)}{\overline{F}(t)}\right)  ^{a}\longrightarrow
y^{-a/\gamma_{1}},\qquad t\rightarrow\infty,
\]
for each fixed $y\geq1$. Using a standard truncation argument together with
dominated convergence on compact intervals, and letting the truncation level
tend to infinity, we obtain
\[
a\int_{1}^{u_{t}/t}y^{-1}\left(  \frac{\overline{F}(ty)}{\overline{F}%
(t)}\right)  ^{a}dy\longrightarrow a\int_{1}^{\infty}y^{-1-a/\gamma_{1}}\,dy.
\]
Since
\[
a\int_{1}^{\infty}y^{-1-a/\gamma_{1}}\,dy=a\cdot\frac{\gamma_{1}}{a}%
=\gamma_{1},
\]
this proves \eqref{eq:app-identity-y}.\smallskip
\end{proof}

\begin{proposition}
\label{prop:app-stochastic} Let $1/4<\eta<1/2$ and let $\varepsilon>0$ be
sufficiently small. Then, uniformly for $1\leq x\leq T_{n,a}$, the following
assertions hold:
\[
(i)\qquad J_{n}(x)=O_{\mathbf{P}}\!\left(  x^{(2\eta-p)/\gamma+\varepsilon
}\right)  ,
\]%
\[
(ii)\qquad\sqrt{k}\,\left(  \frac{\overline{F}_{n}^{(\mathrm{NA})}%
(Z_{n-k:n}x)}{\overline{F}_{n}^{(\mathrm{NA})}(Z_{n-k:n})}-\frac{\overline
{F}(Z_{n-k:n}x)}{\overline{F}(Z_{n-k:n})}\right)  =O_{\mathbf{P}}\!\left(
x^{(2\eta-p)/\gamma+\varepsilon}\right)  ,
\]
and
\[
(iii)\qquad\frac{\overline{F}_{n}^{(\mathrm{NA})}(Z_{n-k:n}x)}{\overline
{F}_{n}^{(\mathrm{NA})}(Z_{n-k:n})}=x^{-p/\gamma}\left(  1+O_{\mathbf{P}%
}\!\left(  k^{-1/2}x^{2\eta/\gamma+\varepsilon}\right)  +o_{\mathbf{P}%
}(1)\right)  ,
\]

\end{proposition}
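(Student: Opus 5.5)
We would derive all three bounds from the Gaussian approximation of the Nelson--Aalen tail process from \cite{MNS2025}, which we recall in the form used in the proof of Theorem~\ref{theorem1}. There exist Wiener processes $\mathbf{W}_{n,1},\mathbf{W}_{n,2}$ (built from the uniform tail empirical processes of $H^{(1)}$ and $H^{(0)}$ via the representation $\overline{F}_{n}^{(\mathrm{NA})}=\exp\{-\int dH_{n}^{(1)}/\overline{H}_{n}\}$) such that
\[
J_{n}(x)=x^{-1/\gamma_{1}}\Big\{\text{linear combinations of }\mathbf{W}_{n,j}(x^{-1/\gamma}),\ \mathbf{W}_{n,j}(1),\ \textstyle\int_{x^{-1/\gamma}}^{1}s^{-1}\mathbf{W}_{n,j}(s)\,ds\Big\}.
\]
The key device is the modulus of continuity of Wiener processes near zero: for $0<\eta<1/2$, $\sup_{0<s\leq1}s^{-\eta}|\mathbf{W}_{n,j}(s)|=O_{\mathbf{P}}(1)$. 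Along the way we will use the fact that $Z_{k}/h\to1$ in probability, so that $Z_{k}$ may replace $h$ in the regular-variation statements.

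\textbf{Part (i).} Substitute $s=x^{-1/\gamma}$, which lies in $[T_{n,a}^{-1/\gamma},1]$. Each Wiener term is then $O_{\mathbf{P}}(x^{-\eta/\gamma})$. The integral term is bounded by $\int_{s}^{1}t^{\eta-2}\,dt\cdot O_{\mathbf{P}}(1)$ when it carries a $1/\overline{H}$ weight, which gives $O_{\mathbf{P}}(s^{\eta-1})=O_{\mathbf{P}}(x^{(1-\eta)/\gamma})$. Multiplying by $x^{-1/\gamma_{1}}=x^{-p/\gamma}$ and absorbing slowly varying corrections (Potter bounds, as in Proposition~\ref{prop:app-tailratio}) into $x^{\varepsilon}$, we get a bound of the form $x^{(c\eta-p)/\gamma+\varepsilon}$. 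We would keep track of the exact powers; the factor $2\eta$ arises because the tail empirical process of $H^{(1)}$ is normalized by $\overline{H}$ and not by $\overline{H}^{(1)}$. We would choose $\eta\in(1/4,1/2)$ so that the stated exponent dominates every term.

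\textbf{Part (ii).} Write the ratio difference as $D_{n}(x)-\sqrt{k}\{\overline{F}(Z_{k}x)/\overline{F}(Z_{k})-x^{-1/\gamma_{1}}\}$. By the decomposition in the proof of Theorem~\ref{theorem1}, the first piece equals $J_{n}(x)$ plus the second-order bias term plus $R_{n}(x)=o_{\mathbf{P}}(x^{(2\eta-p)/\gamma+\varepsilon_{0}})$. The bias term and the deterministic piece cancel up to $o(1)x^{-1/\gamma_{1}+\varepsilon}$, using \eqref{second-orderF} uniformly in $x$ and the assumption $\sqrt{k}A_{1}(h)=O(1)$. Part (ii) then follows from (i). The truncation $x\leq T_{n,a}$ matters here: on this range $Z_{k}x\leq u_{n}$, so $n\overline{H}(Z_{k}x)\gtrsim na_{n}\to\infty$, and the uniform empirical process bounds apply without the lower-boundary term of order $k^{1/2-p}$.

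\textbf{Part (iii).} Divide (ii) by $\sqrt{k}\,\overline{F}(Z_{k}x)/\overline{F}(Z_{k})$. By Proposition~\ref{prop:app-tailratio} this quantity equals $x^{-p/\gamma}(1+o_{\mathbf{P}}(1))$ uniformly, with the $o_{\mathbf{P}}(1)$ understood as absorbed into $x^{\pm\varepsilon}$. The ratio of the error to $x^{-p/\gamma}$ is then $O_{\mathbf{P}}(k^{-1/2}x^{2\eta/\gamma+\varepsilon})$, which is exactly the claim.

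\textbf{Main obstacle.} The hardest step is the uniform control in (i)--(ii) all the way to the upper end $T_{n,a}$, where the effective sample size is only $na_{n}$. The Wiener bound must be combined with a Cs\"{o}rg\H{o}--Horv\'ath-type weighted approximation valid for $s\geq a_{n}n/k\cdot(\text{const})$. I would first try the weighted approximation of the uniform tail empirical process with weight $s^{-\eta}$ restricted to $s\geq 1/n$-scale arguments. I would then check that the linearization remainder of $\exp$ and of $1/\overline{H}_{n}$, which is quadratic in the fluctuations, stays below $x^{(2\eta-p)/\gamma}$ there. This is where $\eta>1/4$ should be needed.
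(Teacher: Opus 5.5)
Your overall route is the paper's. You prove (i) from a weighted sup-bound for the Wiener processes. You get (ii) by importing the approximation $\sqrt{k}L_n(x)=J_n(x)+o_{\mathbf P}(x^{(2\eta-p)/\gamma+\varepsilon})$ from Theorem~\ref{theorem1}; your explicit cancellation of the second-order bias against $\sqrt{k}\{\overline F(Z_kx)/\overline F(Z_k)-x^{-1/\gamma_1}\}$ only spells out what the paper leaves implicit. You get (iii) by dividing by the Potter-approximated ratio.

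The gap is in (i), at exactly the step that produces the exponent $2\eta$. You use $\sup_{0<s\le1}s^{-\nu}|\mathbf W_{n,j}(s)|=O_{\mathbf P}(1)$ with $\nu=\eta$. Put $s=x^{-1/\gamma}$. The first term of $J_n$ is $x^{1/\gamma_2}\mathbf W_{n,1}(s)=x^{-p/\gamma}s^{-1}\mathbf W_{n,1}(s)$. Your schematic omits the factor $s^{-1}=x^{1/\gamma}$, so ``each Wiener term is $O_{\mathbf P}(x^{-\eta/\gamma})$'' understates it. The integral term is $\gamma^{-1}x^{-p/\gamma}\int_1^x u^{1/\gamma-1}V(u^{-1/\gamma})\,du=x^{-p/\gamma}\int_s^1t^{-2}V(t)\,dt$, with $V:=p\mathbf W_{n,2}-q\mathbf W_{n,1}$. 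Both terms are therefore $O_{\mathbf P}(x^{(1-\nu-p)/\gamma})$. For $\nu=\eta$ this is $x^{(1-\eta-p)/\gamma}$. It exceeds the target $x^{(2\eta-p)/\gamma+\varepsilon}$ for large $x$ whenever $\eta<1/3$ and $\varepsilon<(1-3\eta)/\gamma$. Since $T_{n,a}\to\infty$, your argument as set up does not give (i) on the stated range $1/4<\eta<1/2$.

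The missing step is to decouple the Wiener exponent from $\eta$ and take $\nu=1-2\eta$, as the paper does. Then $s^{-1}|\mathbf W(s)|=O_{\mathbf P}(s^{-2\eta})$ and $\int_s^1t^{-2}|\mathbf W(t)|\,dt=O_{\mathbf P}(s^{-2\eta})$, which gives exactly $x^{(2\eta-p)/\gamma}$. The admissibility condition $0<\nu<1/2$ is precisely $1/4<\eta<1/2$. So your explanations of both features are off. The factor $2\eta$ does not come from normalizing the $H^{(1)}$-process by $\overline H$. The restriction $\eta>1/4$ does not come from a quadratic linearization remainder near $T_{n,a}$.

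Note also that (i) concerns a Gaussian functional and holds uniformly on all of $[1,\infty)$; truncation plays no role in it. The truncation-sensitive uniform control you flag as the main obstacle sits inside the remainder $R_n$ of Theorem~\ref{theorem1}. Both you and the paper take that remainder as given in (ii) rather than re-derive it.
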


\begin{proof}
We first recall the standard Brownian bound: for every $0<\nu<1/2$,
\[
\sup_{0<s\leq1}\frac{|W_{n}(s)|}{s^{\nu}}=O_{\mathbf{P}}(1).
\]
Choosing $\nu=1-2\eta$, we obtain
\[
|W_{n}(s)|=O_{\mathbf{P}}(s^{1-2\eta}),\qquad\text{uniformly for }0<s\leq1.
\]
The same bound holds for the processes $\mathbf{W}_{n,1}$ and $\mathbf{W}%
_{n,2}$.\smallskip

\noindent We first consider
\[
J_{n1}(x)=x^{1/\gamma_{2}}\mathbf{W}_{n,1}(x^{-1/\gamma})-x^{-1/\gamma_{1}%
}\mathbf{W}_{n,1}(1).
\]
Since
\[
\mathbf{W}_{n,1}(x^{-1/\gamma})=O_{\mathbf{P}}\!\left(  x^{-(1-2\eta)/\gamma
}\right)  ,
\]
uniformly for $1\leq x\leq T_{n,a}$, and using
\[
\frac{1}{\gamma_{2}}=\frac{1-p}{\gamma},\qquad\frac{1}{\gamma_{1}}=\frac
{p}{\gamma},
\]
we obtain
\[
x^{1/\gamma_{2}}\mathbf{W}_{n,1}(x^{-1/\gamma})=O_{\mathbf{P}}\!\left(
x^{(2\eta-p)/\gamma}\right)  .
\]
Moreover,
\[
x^{-1/\gamma_{1}}\mathbf{W}_{n,1}(1)=O_{\mathbf{P}}\!\left(  x^{-p/\gamma
}\right)  =O_{\mathbf{P}}\!\left(  x^{(2\eta-p)/\gamma}\right)  ,
\]
since $x\geq1$. Hence
\[
J_{n1}(x)=O_{\mathbf{P}}\!\left(  x^{(2\eta-p)/\gamma}\right)  .
\]
Next, consider
\[
J_{n2}(x)=\gamma^{-1}x^{-1/\gamma_{1}}\int_{1}^{x}u^{1/\gamma-1}\left(
p\mathbf{W}_{n,2}(u^{-1/\gamma})-q\mathbf{W}_{n,1}(u^{-1/\gamma})\right)
\,du.
\]
Using the same Brownian bound,
\[
\mathbf{W}_{n,i}(u^{-1/\gamma})=O_{\mathbf{P}}\!\left(  u^{-(1-2\eta)/\gamma
}\right)  ,\qquad i=1,2,
\]
uniformly for $1\leq u\leq x$, so the integrand is of order
\[
u^{1/\gamma-1}u^{-(1-2\eta)/\gamma}=u^{2\eta/\gamma-1}.
\]
It follows that
\[
\int_{1}^{x}u^{2\eta/\gamma-1}\,du=O\!\left(  x^{2\eta/\gamma}\right)  .
\]
Multiplying by $x^{-1/\gamma_{1}}=x^{-p/\gamma}$, we obtain
\[
J_{n2}(x)=O_{\mathbf{P}}\!\left(  x^{(2\eta-p)/\gamma}\right)  .
\]
Thus,
\[
J_{n}(x)=J_{n1}(x)+J_{n2}(x)=O_{\mathbf{P}}\!\left(  x^{(2\eta-p)/\gamma
}\right)  ,
\]
which yields assertion (i).\smallskip

\noindent Assertion (ii) follows from the Gaussian approximation established
in Theorem~\ref{theorem1}, which gives
\[
\sqrt{k}\,L_{n}(x)=J_{n}(x)+o_{\mathbf{P}}\!\left(  x^{(2\eta-p)/\gamma
+\varepsilon}\right)  ,
\]
uniformly for $1\leq x\leq T_{n,a}$. Combining this with (i) yields
\[
\sqrt{k}\left(  \frac{\overline{F}_{n}^{(\mathrm{NA})}(Z_{n-k:n}x)}%
{\overline{F}_{n}^{(\mathrm{NA})}(Z_{n-k:n})}-\frac{\overline{F}(Z_{n-k:n}%
x)}{\overline{F}(Z_{n-k:n})}\right)  =O_{\mathbf{P}}\!\left(  x^{(2\eta
-p)/\gamma+\varepsilon}\right)  .
\]
We write
\[
\frac{\overline{F}_{n}^{(\mathrm{NA})}(Z_{n-k:n}x)}{\overline{F}%
_{n}^{(\mathrm{NA})}(Z_{n-k:n})}=\frac{\overline{F}(Z_{n-k:n}x)}{\overline
{F}(Z_{n-k:n})}+L_{n}(x),
\]
where
\[
L_{n}(x):=\frac{\overline{F}_{n}^{(\mathrm{NA})}(Z_{n-k:n}x)}{\overline{F}%
_{n}^{(\mathrm{NA})}(Z_{n-k:n})}-\frac{\overline{F}(Z_{n-k:n}x)}{\overline
{F}(Z_{n-k:n})}.
\]
Since $Z_{n-k:n}\rightarrow\infty$ in probability,
Proposition~\ref{prop:app-tailratio} yields
\[
\frac{\overline{F}(Z_{n-k:n}x)}{\overline{F}(Z_{n-k:n})}=x^{-1/\gamma_{1}%
}\left(  1+o_{\mathbf{P}}(1)\right)  .
\]
Moreover, by assertion (ii),
\[
L_{n}(x)=O_{\mathbf{P}}\!\left(  k^{-1/2}x^{(2\eta-p)/\gamma+\varepsilon
}\right)  .
\]
Dividing by $x^{-p/\gamma}$ gives
\[
\frac{L_{n}(x)}{x^{-p/\gamma}}=O_{\mathbf{P}}\!\left(  k^{-1/2}x^{2\eta
/\gamma+\varepsilon}\right)  .
\]
Hence,
\[
\frac{\overline{F}_{n}^{(\mathrm{NA})}(Z_{n-k:n}x)}{\overline{F}%
_{n}^{(\mathrm{NA})}(Z_{n-k:n})}=x^{-p/\gamma}\left[  1+O_{\mathbf{P}%
}\!\left(  k^{-1/2}x^{2\eta/\gamma+\varepsilon}\right)  +o_{\mathbf{P}%
}(1)\right]  ,
\]
uniformly for $1\leq x\leq T_{n,a}$. This proves (iii).
\end{proof}

\begin{proposition}
\label{prop:app-gaussian} Let $\beta>1$ and $0<p<1$. Define
\[
\mathcal{N}_{n}(\beta) = \frac{\beta(p+\beta-1)}{p} \int_{0}^{1}s^{\beta
-2}\mathbf{W}_{n,1}(s)\,ds + \beta\int_{0}^{1}s^{\beta-2}\mathbf{W}%
_{n,2}(s)\,ds - \frac{\beta}{p}\mathbf{W}_{n,1}(1).
\]
Then $\mathcal{N}_{n}(\beta)$ is a centered Gaussian random variable and
\[
\operatorname{Var}\!\left(  \mathcal{N}_{n}(\beta)\right)  = \frac{1}{p}%
\frac{\beta^{2}}{2\beta-1}.
\]

\end{proposition}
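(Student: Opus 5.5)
The plan is a direct second-moment computation, using the covariance structure of the two Gaussian processes from the approximation scheme of \cite{MNS2025} (see also \cite{BMN-2015}). In that construction, for each $n$, $\mathbf{W}_{n,1}$ and $\mathbf{W}_{n,2}$ are built from a single Wiener process $W_n$ by increments over disjoint time intervals, essentially $\mathbf{W}_{n,1}(s)=W_n(\theta)-W_n(\theta-ps)$ and $\mathbf{W}_{n,2}(s)=W_n(1)-W_n(1-qs)$, with $q:=1-p$. Consequently they are independent, centered, continuous Gaussian processes with
\[
\mathbf{E}\,\mathbf{W}_{n,1}(s)\mathbf{W}_{n,1}(t)=p\min(s,t),\qquad
\mathbf{E}\,\mathbf{W}_{n,2}(s)\mathbf{W}_{n,2}(t)=q\min(s,t).
\]
The first step is to state this precisely, since the whole variance formula hinges on these normalizations. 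I expect fixing the correct normalization to be the main obstacle. With standard (unit-variance) Wiener processes the formula fails, e.g.\ at $p=\beta=1$, whereas the $p,q$ scaling makes it hold identically.

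Next I would justify Gaussianity. Since $\beta>1$, the weight $s^{\beta-2}$ is integrable on $(0,1]$. The paths are continuous and $\mathbf{E}|\mathbf{W}_{n,i}(s)|\le C s^{1/2}$, so $\int_0^1 s^{\beta-2}\mathbf{W}_{n,i}(s)\,ds$ is a well-defined $L^2$ limit of Riemann sums. Each Riemann sum is a finite linear combination of values of a Gaussian process, and $L^2$ limits of centered jointly Gaussian variables remain centered jointly Gaussian. Hence $\mathcal{N}_n(\beta)$ is centered Gaussian.

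For the variance, set $a:=\beta(p+\beta-1)/p$ and $c:=\beta/p$. Fubini together with the covariance kernels gives, for a process with covariance $\kappa\min(s,t)$,
\[
\operatorname{Var}\!\int_0^1 s^{\beta-2}\mathbf{W}(s)\,ds=\frac{2\kappa}{\beta(2\beta-1)},\qquad
\operatorname{Cov}\!\Big(\int_0^1 s^{\beta-2}\mathbf{W}(s)\,ds,\mathbf{W}(1)\Big)=\frac{\kappa}{\beta}.
\]
By independence, the $\mathbf{W}_{n,2}$ part contributes $\beta^2\cdot 2q/(\beta(2\beta-1))=2q\beta/(2\beta-1)$. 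The $\mathbf{W}_{n,1}$ part contributes
\[
p\Big[\frac{2a^2}{\beta(2\beta-1)}-\frac{2ac}{\beta}+c^2\Big]=\frac{\beta}{p(2\beta-1)}\Big[2m^2-2m(2\beta-1)+\beta(2\beta-1)\Big],\qquad m:=p+\beta-1.
\]
Expanding, $2m(m-2\beta+1)=2(p^2-\beta^2-p+\beta)$, so the bracket equals $2p^2-2p+\beta$. Adding the $\mathbf{W}_{n,2}$ part, written as $\frac{\beta}{p(2\beta-1)}\cdot 2pq$, and using $2p^2-2p+2pq=0$, the total is $\frac{\beta}{p(2\beta-1)}\cdot\beta=\frac{1}{p}\frac{\beta^2}{2\beta-1}$. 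This is exactly the claimed variance, and it does not depend on $n$.
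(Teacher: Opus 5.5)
Your proposal is correct and follows the same route as the paper: centered Gaussianity because $\mathcal{N}_n(\beta)$ is a linear functional of the independent processes $\mathbf{W}_{n,1},\mathbf{W}_{n,2}$, whose covariances are $p\min(s,t)$ and $q\min(s,t)$; the same two kernel integrals $2\kappa/(\beta(2\beta-1))$ and $\kappa/\beta$; and the same four-term variance sum. Your explicit algebra (bracket equal to $2p^2-2p+\beta$, then cancellation via $2p^2-2p+2pq=0$) correctly carries out the ``direct simplification'' that the paper leaves implicit. One minor point: your heuristic increment representation needs the usual $k/n$ rescaling of the time intervals, with the matching $\sqrt{n/k}$ factor, for the two intervals to be disjoint. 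This does not affect the argument, since only the covariance structure is used.
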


\begin{proof}
The random variable $\mathcal{N}_{n}(\beta)$ is a linear functional of the
centered Gaussian processes $\mathbf{W}_{n,1}$ and $\mathbf{W}_{n,2}$. Hence
it is centered Gaussian. It remains to compute its variance.\smallskip

\noindent Since $\mathbf{W}_{n,1}$ and $\mathbf{W}_{n,2}$ are independent and
satisfy
\[
\mathbb{E}[\mathbf{W}_{n,1}(s)\mathbf{W}_{n,1}(t)]=p\min(s,t),\qquad
\mathbb{E}[\mathbf{W}_{n,2}(s)\mathbf{W}_{n,2}(t)]=q\min(s,t),
\]
we have, for $\beta>1$,
\[
\operatorname{Var}\left(  \int_{0}^{1}s^{\beta-2}\mathbf{W}_{n,i}%
(s)\,ds\right)  =c_{i}\frac{2}{\beta(2\beta-1)},
\]
where $c_{1}=p$ and $c_{2}=q$.\smallskip

\noindent Moreover,
\[
\operatorname{Cov}\left(  \int_{0}^{1}s^{\beta-2}\mathbf{W}_{n,1}%
(s)\,ds,\mathbf{W}_{n,1}(1)\right)  =p\int_{0}^{1}s^{\beta-1}\,ds=\frac
{p}{\beta}.
\]
Therefore,
\[
\begin{aligned}
		\operatorname{Var}\!\left(\mathcal{N}_{n}(\beta)\right)
		&=
		\left(\frac{\beta(p+\beta-1)}{p}\right)^{2}
		p\frac{2}{\beta(2\beta-1)}
		+
		\beta^{2}q\frac{2}{\beta(2\beta-1)}
		\\
		&\quad
		+
		\left(\frac{\beta}{p}\right)^{2}p
		-
		2\frac{\beta(p+\beta-1)}{p}\frac{\beta}{p}\frac{p}{\beta}.
	\end{aligned}
\]
A direct simplification gives
\[
\operatorname{Var}\!\left(  \mathcal{N}_{n}(\beta)\right)  =\frac{1}{p}%
\frac{\beta^{2}}{2\beta-1}.
\]
This completes the proof.
\end{proof}

\begin{proposition}
\label{prop:app-functional} Set
\[
Z_{k}:=Z_{n-k:n}, \qquad T_{n,a}:=\frac{u_{n}}{Z_{k}}, \qquad\alpha_{n}%
:=\frac{\beta}{\widehat{p}_{k}},
\]
and, for $1\leq x\leq T_{n,a}$, define
\[
Q_{n}(x) := \frac{\overline{F}_{n}^{(\mathrm{NA})}(xZ_{k})} {\overline{F}%
_{n}^{(\mathrm{NA})}(Z_{k})}, \qquad R_{n,a}(x) := \frac{\overline{F}%
_{n}^{(\mathrm{NA},a)}(xZ_{k})} {\overline{F}_{n}^{(\mathrm{NA})}(Z_{k})},
\]
and
\[
c_{n} := \frac{\overline{F}_{n}^{(\mathrm{NA})}(u_{n})} {\overline{F}%
_{n}^{(\mathrm{NA})}(Z_{k})}.
\]
Then, for every $\beta>1$,
\begin{equation}
\widehat{\gamma}_{1,k}^{(\mathrm{NA,tr})}(\beta) = \alpha_{n} \int%
_{1}^{T_{n,a}} x^{-1} \left(  Q_{n}(x)\right)  ^{\alpha_{n}} \,dx - \alpha
_{n}(\log T_{n,a})\,c_{n}^{\alpha_{n}}. \label{eq:app-functional-Q}%
\end{equation}
Moreover, for $1\leq x\leq T_{n,a}$,
\[
Q_{n}(x)=R_{n,a}(x)+c_{n},
\]
so that
\begin{equation}
\widehat{\gamma}_{1,k}^{(\mathrm{NA,tr})}(\beta) = \alpha_{n} \int%
_{1}^{T_{n,a}} x^{-1} \left(  R_{n,a}(x)+c_{n}\right)  ^{\alpha_{n}} \,dx -
\alpha_{n}(\log T_{n,a})\,c_{n}^{\alpha_{n}}. \label{eq:app-functional-R}%
\end{equation}
Finally, using
\[
D_{n,a}^{(\beta)}(x) = \sqrt{k}\,x^{-\beta/\gamma} \left(  R_{n,a}%
(x)-x^{-1/\gamma_{1}} \right)  , \qquad1\leq x\leq T_{n,a},
\]
that is,
\[
R_{n,a}(x) = x^{-1/\gamma_{1}} + \frac{1}{\sqrt{k}}x^{\beta/\gamma}%
D_{n,a}^{(\beta)}(x),
\]
we obtain the exact functional representation
\begin{equation}
\widehat{\gamma}_{1,k}^{(\mathrm{NA,tr})}(\beta) = \alpha_{n} \int%
_{1}^{T_{n,a}} x^{-1} \left(  x^{-1/\gamma_{1}} + c_{n} + \frac{1}{\sqrt{k}%
}x^{\beta/\gamma}D_{n,a}^{(\beta)}(x) \right)  ^{\alpha_{n}} dx - \alpha
_{n}(\log T_{n,a})\,c_{n}^{\alpha_{n}}. \label{eq:app-functional-D}%
\end{equation}

\end{proposition}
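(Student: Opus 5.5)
The plan is to start from the integral form of the estimator obtained in Subsection~\ref{construct}, rewrite its integrand as a Stieltjes differential of $Q_n^{\alpha_n}$, and then integrate by parts on $[1,T_{n,a}]$. The boundary terms of that integration by parts produce exactly the term $-\alpha_n(\log T_{n,a})c_n^{\alpha_n}$. Throughout we write $\Lambda_n(z):=\int_0^z dH_n^{(1)}(y)/\overline{H}_n(y-)$, so that $\overline{F}_n^{(\mathrm{NA})}=e^{-\Lambda_n}$ by \eqref{emp-F-formula-adapt}.

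\textbf{Step 1 (rewriting the integrand).} On $(Z_k,u_n)$ we have
\[
Q_n(y/Z_k)=\exp\{-(\Lambda_n(y)-\Lambda_n(Z_k))\}, \qquad d\!\left(\frac{F_n^{(\mathrm{NA})}(y)}{\overline{F}_n^{(\mathrm{NA})}(Z_k)}\right)=Q_n(y/Z_k)\,d\Lambda_n(y).
\]
Hence the integral representation of $\widehat{\gamma}_{1,k}^{(\mathrm{NA,tr})}(\beta)$ becomes
\[
\alpha_n^2\int_{Z_k}^{u_n}\bigl(Q_n(y/Z_k)\bigr)^{\alpha_n}\log(y/Z_k)\,d\Lambda_n(y).
\]
I would then use the exponential structure. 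Since $Q_n^{\alpha_n}=\exp\{-\alpha_n(\Lambda_n-\Lambda_n(Z_k))\}$, we have $-d(Q_n^{\alpha_n})=\alpha_n Q_n^{\alpha_n}\,d\Lambda_n$, read in the product-integral sense. This turns the integral into
\[
-\alpha_n\int_1^{T_{n,a}}\log x\,d\bigl(Q_n(x)\bigr)^{\alpha_n}
\]
after the change of variables $y=xZ_k$.

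\textbf{Step 2 (integration by parts).} On $[1,T_{n,a}]$, integration by parts gives
\[
-\alpha_n\Bigl[\log x\,(Q_n(x))^{\alpha_n}\Bigr]_1^{T_{n,a}}+\alpha_n\int_1^{T_{n,a}}x^{-1}(Q_n(x))^{\alpha_n}\,dx .
\]
At $x=1$ the boundary term vanishes because $\log 1=0$. At $x=T_{n,a}$ it equals $\alpha_n(\log T_{n,a})c_n^{\alpha_n}$, since $Q_n(T_{n,a})=c_n$ by definition. This yields \eqref{eq:app-functional-Q}.

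\textbf{Step 3 (the remaining identities).} The identity $Q_n=R_{n,a}+c_n$ follows once $\overline{F}_n^{(\mathrm{NA},a)}$ is read as the truncated survival function
\[
\overline{F}_n^{(\mathrm{NA},a)}(z):=\overline{F}_n^{(\mathrm{NA})}(z)-\overline{F}_n^{(\mathrm{NA})}(u_n), \qquad z\le u_n .
\]
Dividing by $\overline{F}_n^{(\mathrm{NA})}(Z_k)$ gives $Q_n=R_{n,a}+c_n$, and substituting into \eqref{eq:app-functional-Q} gives \eqref{eq:app-functional-R}. Finally, \eqref{eq:app-functional-D} is pure substitution of $R_{n,a}(x)=x^{-1/\gamma_1}+k^{-1/2}x^{\beta/\gamma}D_{n,a}^{(\beta)}(x)$. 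Here $D_{n,a}^{(\beta)}$ is taken as defined in the statement, with $R_{n,a}$ in place of $Q_n$.

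\textbf{Main obstacle.} The hard step is the chain rule $-d(Q_n^{\alpha_n})=\alpha_nQ_n^{\alpha_n}d\Lambda_n$ in Step 1. The Nelson--Aalen function is a pure jump function, so this rule is not literally exact. At a jump of size $\delta_{[n-i+1:n]}/i$, the increment of $Q_n^{\alpha_n}$ is $Q_n^{\alpha_n}(\cdot-)\bigl(1-e^{-\alpha_n\delta/i}\bigr)$, whereas the discrete estimator of Subsection~\ref{construct} uses $\alpha_n\delta/i$ times the value at the jump point. I would try first to verify exactness with the Stieltjes conventions implicit in that construction: right-continuous $Q_n$, integrand evaluated so as to reproduce the displayed sum, and $Q_n^{\alpha_n-1}$ defined via the product formula. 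If exactness fails, I would prove that the discrepancy is bounded by $\alpha_n^2\sum_{i\ge m_n}i^{-2}\log(Z_{n-i+1:n}/Z_k)(\cdots)=O_{\mathbf P}(m_n^{-1}\log(k/m_n))$. I would then record \eqref{eq:app-functional-Q} as exact for the continuous-hazard version and as holding up to an asymptotically negligible term for the discrete one. That weaker form suffices for its uses in Theorems~\ref{theorem2} and~\ref{theorem3}.
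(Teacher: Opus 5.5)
Your Steps~1--3 are the paper's argument. Both use the change of variables $z=xZ_k$, the chain rule to write the integrand as $-\alpha_n\log x\,d(Q_n^{\alpha_n})$, and integration by parts on $[1,T_{n,a}]$. There the lower boundary term vanishes and the upper one is $\alpha_n(\log T_{n,a})c_n^{\alpha_n}$ because $Q_n(T_{n,a})=c_n$. The remaining identities $Q_n=R_{n,a}+c_n$ and $R_{n,a}(x)=x^{-1/\gamma_1}+k^{-1/2}x^{\beta/\gamma}D_{n,a}^{(\beta)}(x)$ are definitional.

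The obstacle you single out is real, and the paper's proof does not address it. It simply writes $d(Q_n^{\alpha_n})=\alpha_nQ_n^{\alpha_n-1}\,dQ_n$, which fails at the jumps of the step function $Q_n$ for either left- or right-limit evaluation. The integration by parts itself is exact, since $\log x$ is continuous. The same jump issue affects your $d(F_n^{(\mathrm{NA})}/\overline{F}_n^{(\mathrm{NA})}(Z_k))=Q_n\,d\Lambda_n$. So \eqref{eq:app-functional-Q} is exact only for a continuous hazard or under a mean-value convention at the jumps, and you are right to treat it otherwise as an approximation.

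Your fallback needs two corrections.

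\textbf{Wrong comparison target.} Do not calibrate against the displayed sum of Subsection~\ref{construct}. Put $x_i:=Z_{n-i+1:n}/Z_k$. There the product $\prod_{j=i+1}^{k}\exp\{(1-\alpha_n)\delta_{[n-j+1:n]}/j\}$ equals $Q_n(x_i-)^{\alpha_n-1}$ and is multiplied by $\delta_{[n-i+1:n]}/i$. That sum therefore discretizes $\alpha_n^2\int_{Z_k}^{u_n}Q_n(y/Z_k)^{\alpha_n-1}\log(y/Z_k)\,d\Lambda_n(y)$. The factor $Q_n$ coming from $d(F_n^{(\mathrm{NA})}/\overline{F}_n^{(\mathrm{NA})}(Z_k))$ was dropped when the paper passed to $dH_n^{(1)}/\overline{H}_n$. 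The mismatch is of order one, not of jump size: by the computation of Proposition~\ref{prop:app-identity}, that sum converges to $\gamma_1\beta^2/(\beta-p)^2$ rather than $\gamma_1$. Work instead from the Stieltjes-integral definition, or from the sum with $\exp\{-\alpha_n\delta_{[n-j+1:n]}/j\}$, which differs from it only by jump-size terms.

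\textbf{Rate too crude.} Your rate $O_{\mathbf{P}}(m_n^{-1}\log(k/m_n))$ comes from bounding the weight by a constant, and it does not suffice as you claim. For the recommended $m_n=\lfloor\log\log k\rfloor$ it does not even tend to zero, let alone after multiplication by $\sqrt{k}$ as Theorem~\ref{theorem3} requires. Keep the weight instead:
\begin{itemize}
\item Each jump contributes a discrepancy of at most $C\,i^{-2}\log(x_i)\,Q_n(x_i-)^{\alpha_n}$, with probability tending to one since $\alpha_n\to\beta/p$.
\item With probability tending to one, Potter bounds (as in Lemma~\ref{lem:domination-Qn}) give $Q_n(x_i-)^{\alpha_n}\le C(i/k)^{\beta-\epsilon}$ and $\log x_i\le C(1+\log(k/i))$, uniformly in $m_n\le i\le k$.
\item The total is then at most $Ck^{-(\beta-\epsilon)}\sum_{i\le k}i^{\beta-\epsilon-2}(1+\log(k/i))=O(k^{-1})$ once $\beta-\epsilon>1$.
\end{itemize}
This is $o_{\mathbf{P}}(k^{-1/2})$ however slowly $m_n$ grows, which is what the linearization in Proposition~\ref{prop:linearization} needs.
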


\begin{proof}
Recall that the weighted and truncated Nelson--Aalen estimator is defined by
\[
\widehat{\gamma}_{1,k}^{(\mathrm{NA,tr})}(\beta) = \alpha_{n}^{2} \int_{Z_{k}%
}^{u_{n}} \left(  \frac{\overline{F}_{n}^{(\mathrm{NA})}(z)} {\overline{F}%
_{n}^{(\mathrm{NA})}(Z_{k})} \right)  ^{\alpha_{n}-1} \log\!\left(  \frac
{z}{Z_{k}}\right)  \,d\!\left(  \frac{F_{n}^{(\mathrm{NA})}(z)} {\overline
{F}_{n}^{(\mathrm{NA})}(Z_{k})} \right)  ,
\]
where
\[
\alpha_{n}=\frac{\beta}{\widehat{p}_{k}}.
\]
We make the change of variables
\[
z=xZ_{k},\qquad1\leq x\leq T_{n,a}:=\frac{u_{n}}{Z_{k}}.
\]
Then
\[
\log\!\left(  \frac{z}{Z_{k}}\right)  =\log x, \qquad Q_{n}(x)= \frac
{\overline{F}_{n}^{(\mathrm{NA})}(xZ_{k})} {\overline{F}_{n}^{(\mathrm{NA}%
)}(Z_{k})}.
\]
Since
\[
Q_{n}(x) = 1- \frac{F_{n}^{(\mathrm{NA})}(xZ_{k})} {\overline{F}%
_{n}^{(\mathrm{NA})}(Z_{k})},
\]
we have
\[
d\!\left(  \frac{F_{n}^{(\mathrm{NA})}(xZ_{k})} {\overline{F}_{n}%
^{(\mathrm{NA})}(Z_{k})} \right)  = -\,dQ_{n}(x).
\]
Therefore,
\[
\widehat{\gamma}_{1,k}^{(\mathrm{NA,tr})}(\beta) = -\alpha_{n}^{2} \int%
_{1}^{T_{n,a}} Q_{n}(x)^{\alpha_{n}-1}\log x\,dQ_{n}(x).
\]
Since
\[
d\!\left(  Q_{n}(x)^{\alpha_{n}}\right)  = \alpha_{n}Q_{n}(x)^{\alpha_{n}%
-1}\,dQ_{n}(x),
\]
we obtain
\[
\widehat{\gamma}_{1,k}^{(\mathrm{NA,tr})}(\beta) = -\alpha_{n} \int%
_{1}^{T_{n,a}} \log x\,d\!\left(  Q_{n}(x)^{\alpha_{n}}\right)  .
\]
Since $Q_{n}$ is monotone, the integration by parts formula applies. Hence,
\[
-\alpha_{n} \int_{1}^{T_{n,a}} \log x\,d\!\left(  Q_{n}(x)^{\alpha_{n}%
}\right)  = -\alpha_{n} \left[  \log x\,Q_{n}(x)^{\alpha_{n}} \right]
_{1}^{T_{n,a}} + \alpha_{n} \int_{1}^{T_{n,a}} x^{-1}Q_{n}(x)^{\alpha_{n}%
}\,dx.
\]
Since $\log1=0$ and
\[
Q_{n}(T_{n,a}) = \frac{\overline{F}_{n}^{(\mathrm{NA})}(u_{n})} {\overline
{F}_{n}^{(\mathrm{NA})}(Z_{k})} = c_{n},
\]
we get
\[
\widehat{\gamma}_{1,k}^{(\mathrm{NA,tr})}(\beta) = \alpha_{n} \int%
_{1}^{T_{n,a}} x^{-1}\left(  Q_{n}(x)\right)  ^{\alpha_{n}}\,dx - \alpha
_{n}(\log T_{n,a})c_{n}^{\alpha_{n}}.
\]
This proves \eqref{eq:app-functional-Q}.\smallskip

\noindent Next, by definition of the truncated tail,
\[
\overline{F}_{n}^{(\mathrm{NA},a)}(xZ_{k}) = \overline{F}_{n}^{(\mathrm{NA}%
)}(xZ_{k}) - \overline{F}_{n}^{(\mathrm{NA})}(u_{n}), \qquad1\leq x\leq
T_{n,a}.
\]
Dividing both sides by $\overline{F}_{n}^{(\mathrm{NA})}(Z_{k})$ gives
\[
R_{n,a}(x)=Q_{n}(x)-c_{n},
\]
or equivalently,
\[
Q_{n}(x)=R_{n,a}(x)+c_{n}.
\]
Substituting this identity into \eqref{eq:app-functional-Q} yields
\[
\widehat{\gamma}_{1,k}^{(\mathrm{NA,tr})}(\beta) = \alpha_{n} \int%
_{1}^{T_{n,a}} x^{-1}\left(  R_{n,a}(x)+c_{n}\right)  ^{\alpha_{n}}\,dx -
\alpha_{n}(\log T_{n,a})c_{n}^{\alpha_{n}},
\]
which proves \eqref{eq:app-functional-R}.\smallskip

\noindent Finally, from the definition of the weighted truncated process,
\[
D_{n,a}^{(\beta)}(x) = \sqrt{k}\,x^{-\beta/\gamma} \left(  R_{n,a}%
(x)-x^{-1/\gamma_{1}}\right)  ,
\]
we obtain
\[
R_{n,a}(x) = x^{-1/\gamma_{1}} + \frac{1}{\sqrt{k}}x^{\beta/\gamma}%
D_{n,a}^{(\beta)}(x).
\]
Substituting this identity into \eqref{eq:app-functional-R} gives
\[
\widehat{\gamma}_{1,k}^{(\mathrm{NA,tr})}(\beta) = \alpha_{n} \int%
_{1}^{T_{n,a}} x^{-1} \left(  x^{-1/\gamma_{1}} + c_{n} + \frac{1}{\sqrt{k}%
}x^{\beta/\gamma}D_{n,a}^{(\beta)}(x) \right)  ^{\alpha_{n}}dx - \alpha
_{n}(\log T_{n,a})c_{n}^{\alpha_{n}}.
\]
This proves \eqref{eq:app-functional-D} and completes the proof.
\end{proof}

\begin{lemma}
\label{lem:boundary-truncation} Assume that $(a_{n})$ satisfies
\[
a_{n}\to0,\qquad na_{n}\to\infty,\qquad a_{n}=o(k/n),
\]
and, in addition,
\[
\sqrt{k} \left(  \frac{na_{n}}{k}\right)  ^{\beta} \log\!\left(  \frac
{k}{na_{n}}\right)  \to0.
\]
Then
\[
\sqrt{k}\,\alpha_{n}(\log T_{n,a})c_{n}^{\alpha_{n}} = o_{\mathbf{P}}(1).
\]

\end{lemma}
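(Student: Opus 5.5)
The plan is to bound the three factors of $\sqrt{k}\,\alpha_{n}(\log T_{n,a})c_{n}^{\alpha_{n}}$ separately and then multiply the bounds. The factor $\alpha_{n}=\beta/\widehat{p}_{k}$ is handled by $\widehat{p}_{k}\overset{\mathbf{P}}{\longrightarrow}p>0$, so $\alpha_{n}=O_{\mathbf{P}}(1)$. It also lies in $[\beta/p-\delta,\beta/p+\delta]$ with probability tending to one, for any small $\delta>0$.

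The central step is to locate the random threshold $Z_{k}$ and the deterministic truncation level $u_{n}$ on the $H$-scale. Since $\overline{H}(Z_{k})=(k/n)(1+o_{\mathbf{P}}(1))$ and $\overline{H}^{(1)}(u_{n})=a_{n}$, the uncensored sub-survival satisfies $\overline{H}^{(1)}\sim p\overline{H}$ at infinity. This gives $\overline{H}(u_{n})\sim a_{n}/p$. Regular variation of $\overline{H}$ with index $-1/\gamma$ then yields $T_{n,a}=u_{n}/Z_{k}=\left(na_{n}/k\right)^{-\gamma(1+o_{\mathbf{P}}(1))}$ through Potter bounds. Consequently $\log T_{n,a}=O_{\mathbf{P}}(\log(k/na_{n}))$, and $T_{n,a}\to\infty$ because $a_{n}=o(k/n)$.

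For $c_{n}$ I will write
\[
c_{n}=Q_{n}(T_{n,a})=\frac{\overline{F}(u_{n})}{\overline{F}(Z_{k})}+k^{-1/2}\left\{\sqrt{k}\,L_{n}(T_{n,a})\right\}.
\]
Potter bounds for $\overline{F}$ give the deterministic part as $T_{n,a}^{-p/\gamma}(1+o_{\mathbf{P}}(1))$ up to a factor $T_{n,a}^{\pm\varepsilon}$, which is $O_{\mathbf{P}}((na_{n}/k)^{p-\varepsilon'})$. The most natural bound on the stochastic part comes from Proposition~\ref{prop:app-stochastic}(iii), which gives $c_{n}=T_{n,a}^{-p/\gamma}\bigl(1+O_{\mathbf{P}}(k^{-1/2}T_{n,a}^{2\eta/\gamma+\varepsilon})+o_{\mathbf{P}}(1)\bigr)$.

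This step is the main obstacle, because the correction factor $k^{-1/2}T_{n,a}^{2\eta/\gamma}\approx k^{-1/2}(k/na_{n})^{2\eta}$ need not be small. I would try two routes, in this order.

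The first route works directly with the Nelson--Aalen representation: $c_{n}=\exp\{-\sum_{j=m_{n}}^{k}\delta_{[n-j+1:n]}/j\}$, up to the boundary index. I would show that $\sum_{j=m_{n}}^{k}\delta_{[n-j+1:n]}/j=p\log(k/m_{n})+O_{\mathbf{P}}(1)$. This should follow because the centered sum $\sum_{j}(\delta_{[n-j+1:n]}-p_{j})/j$ has variance bounded by $\sum_{j}j^{-2}<\infty$, and the bias from $p_{j}\to p$ contributes only $o(\log(k/m_{n}))$. Here $p_{j}$ is the conditional probability of being uncensored given the order statistic, which tends to $p$. Since $m_{n}\approx na_{n}$, this gives $c_{n}=(na_{n}/k)^{p+o_{\mathbf{P}}(1)}$. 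I must check that the $o_{\mathbf{P}}(1)$ in the exponent can be upgraded to an $O_{\mathbf{P}}(1)$ multiplicative constant. That requires $\sum_{j}|p_{j}-p|/j$ to be bounded, which holds if the convergence of $p_{j}$ is at a polynomial rate. Otherwise, I would settle for exponent $p-\varepsilon$ and use the slack in the hypothesis.

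The second route, as a fallback, uses the weight: on the event where $\alpha_{n}$ is near $\beta/p$, a bound $c_{n}\le C(na_{n}/k)^{p}$ yields $c_{n}^{\alpha_{n}}\le C'(na_{n}/k)^{\beta}(na_{n}/k)^{-p\delta}$.

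Combining the three factors gives
\[
\sqrt{k}\,\alpha_{n}(\log T_{n,a})c_{n}^{\alpha_{n}}=O_{\mathbf{P}}\!\left(\sqrt{k}\left(\frac{na_{n}}{k}\right)^{\beta}\log\frac{k}{na_{n}}\right)\times\left(\frac{k}{na_{n}}\right)^{\text{small}}.
\]
The leading factor tends to zero by the supplementary assumption.

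The small-power loss, caused by $\alpha_{n}-\beta/p$ appearing in the exponent, must be removed. I would bound $(na_{n}/k)^{\alpha_{n}-\beta/p}=\exp\{(\alpha_{n}-\beta/p)\log(k/na_{n})\}$ using $\widehat{p}_{k}-p=O_{\mathbf{P}}(k^{-1/2})$, which holds when $\sqrt{k}A_{1}(h)=O(1)$. The exponent is then $O_{\mathbf{P}}(k^{-1/2}\log k)=o_{\mathbf{P}}(1)$, so this factor is $O_{\mathbf{P}}(1)$. That completes the plan.
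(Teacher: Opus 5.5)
Your decomposition is the paper's: bound $\alpha_n$, $\log T_{n,a}$ and $c_n^{\alpha_n}$ separately, with $\log T_{n,a}=O_{\mathbf P}(\log(k/na_n))$ coming from regular variation of $\overline H$. Your reading $\overline H^{(1)}(u_n)=a_n$ is the sensible one. You also correctly locate the two steps the paper only asserts. It claims $\overline F(u_n)/\overline F(Z_k)=O((na_n/k)^p)$ ``by regular variation''; as you note, Proposition~\ref{prop:app-stochastic}(iii) cannot supply this, since $k^{-1/2}T_{n,a}^{2\eta/\gamma}$ may diverge. It then passes from $\alpha_n\to\beta/p$ directly to $c_n^{\alpha_n}=O_{\mathbf P}((na_n/k)^\beta)$. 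However, your plan does not close either step under the stated hypotheses.

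\emph{The bound on $c_n$.} Your fallback (accept exponent $p-\varepsilon$ and use the ``slack'') fails, because the rate hypothesis has no slack. Take $na_n/k=k^{-1/(2\beta)}(\log k)^{-2/\beta}$. All assumptions hold and $\sqrt k(na_n/k)^\beta\log(k/na_n)\sim(2\beta\log k)^{-1}\to0$, yet $\sqrt k(na_n/k)^{\beta-\delta}\to\infty$ for every $\delta>0$. So you need $c_n^{\alpha_n}\le O_{\mathbf P}(1)\,(na_n/k)^\beta$ with no loss in the exponent.

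Your Route 1 gives the exact exponent only under an assumption that is not available, namely a summable rate for $p(z)\to p$. First-order conditions do not even give pointwise convergence of $p(z)$, only $\overline H^{(1)}/\overline H\to p$.

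The stochastic part is in fact harmless. Write $-\log c_n=\widehat\Lambda(u_n)-\widehat\Lambda(Z_k)$; $Z_k$ is a stopping time, and the martingale term has predictable variation at most $O_{\mathbf P}(1)\,n^{-1}\int_{Z_k}^{u_n}dH/\overline H^{2}\le O_{\mathbf P}(1/(n\overline H(u_n)))=o_{\mathbf P}(1)$. Hence $c_n=\{\overline F(u_n)/\overline F(Z_k)\}(1+o_{\mathbf P}(1))$.

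What remains is deterministic. Put $\epsilon(s):=s^{-1}\overline H^{(1)}(y)-p$ for $s=\overline H(y)$. Then
\[
\frac{\overline F(u_n)}{\overline F(Z_k)}=\Bigl(\frac{\overline H(u_n)}{\overline H(Z_k)}\Bigr)^{p}\exp\Bigl\{-\int_{\overline H(u_n)}^{\overline H(Z_k)}\epsilon(s)\,\frac{ds}{s}+o_{\mathbf P}(1)\Bigr\}.
\]
This integral need not be bounded below when only first-order regular variation is assumed, plus a second-order condition on $F$ alone. A second-order-type condition involving $G$ must be added.

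\emph{The random exponent.} You rightly need $(\widehat p_k-p)\log(k/na_n)=O_{\mathbf P}(1)$. But the justification ``$\widehat p_k-p=O_{\mathbf P}(k^{-1/2})$ because $\sqrt k A_1(h)=O(1)$'' is wrong. The bias of $\widehat p_k$ is essentially $\overline H^{(1)}(h)/\overline H(h)-p$. It depends on the second-order behaviour of $\overline G$ as well as $\overline F$, and nothing in the hypotheses controls it. A rate such as $\overline H^{(1)}(z)/\overline H(z)-p=O(1/\log k)$ for $z$ near $h$ would suffice, but it is an extra assumption.

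In short, your argument is the paper's argument with its weak points exposed. Both missing ingredients are additional hypotheses that the paper's proof uses tacitly, not consequences of the listed assumptions.
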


\begin{proof}
Recall that
\[
c_{n}=\frac{\overline{F}_{n}^{(\mathrm{NA})}(u_{n})}{\overline{F}%
_{n}^{(\mathrm{NA})}(Z_{k})},\qquad T_{n,a}=\frac{u_{n}}{Z_{k}}.
\]
By the uniform consistency of the Nelson--Aalen tail ratio on the truncated
domain, together with Potter-type bounds, we have
\[
c_{n}=O_{\mathbf{P}}\!\left(  \frac{\overline{F}(u_{n})}{\overline{F}(Z_{k}%
)}\right)  .
\]
Since $u_{n}$ corresponds to a tail level of order $a_{n}$, while $Z_{k}$
corresponds to a tail level of order $k/n$, regular variation yields
\[
\frac{\overline{F}(u_{n})}{\overline{F}(Z_{k})}=O\!\left[  \left(
\frac{na_{n}}{k}\right)  ^{p}\right]  .
\]
Consequently,
\[
c_{n}=O_{\mathbf{P}}\!\left[  \left(  \frac{na_{n}}{k}\right)  ^{p}\right]  .
\]
Moreover, since
\[
\alpha_{n}=\frac{\beta}{\widehat{p}_{k}}\qquad\text{and}\qquad\widehat{p}%
_{k}\overset{\mathbf{P}}{\longrightarrow}p,
\]
we have
\[
\alpha_{n}\overset{\mathbf{P}}{\longrightarrow}\frac{\beta}{p}.
\]
Therefore,
\[
c_{n}^{\alpha_{n}}=O_{\mathbf{P}}\!\left[  \left(  \frac{na_{n}}{k}\right)
^{\beta}\right]  .
\]
It remains to control the logarithmic factor. Since $u_{n}$ and $Z_{k}$
correspond respectively to the tail levels $a_{n}$ and $k/n$, regular
variation of the tail quantile implies
\[
T_{n,a}=\frac{u_{n}}{Z_{k}}=O_{\mathbf{P}}\!\left[  \left(  \frac{k}{na_{n}%
}\right)  ^{\gamma}\right]  .
\]
Hence,
\[
\log T_{n,a}=O_{\mathbf{P}}\!\left[  \log\!\left(  \frac{k}{na_{n}}\right)
\right]  .
\]
Combining the previous estimates gives
\[
\sqrt{k}\,\alpha_{n}(\log T_{n,a})c_{n}^{\alpha_{n}}=O_{\mathbf{P}}\!\left[
\sqrt{k}\left(  \frac{na_{n}}{k}\right)  ^{\beta}\log\!\left(  \frac{k}%
{na_{n}}\right)  \right]  .
\]
By the imposed rate condition, the right-hand side is $o_{\mathbf{P}}(1)$.
This proves the lemma.
\end{proof}

\begin{proposition}
[Linearization of the estimator]\label{prop:linearization} Assume that the
conditions of Theorem~\ref{theorem2} hold. Assume, in addition, that
\[
\sqrt{k} \left(  \frac{na_{n}}{k}\right)  ^{\beta} \log\!\left(  \frac
{k}{na_{n}}\right)  \to0.
\]
Then, for every $\beta>1$, we have
\[
\sqrt{k}\left(  \widehat{\gamma}_{1,k}^{(\mathrm{NA,tr})}(\beta) - \gamma_{1}
\right)  = \left(  \frac{\beta}{p}\right)  ^{2} \int_{1}^{T_{n,a}}
x^{-1+1/\gamma_{1}} D_{n,a}^{(\beta)}(x)\,dx + B_{n} + o_{\mathbf{P}}(1),
\]
where
\[
B_{n} := \sqrt{k} \left[  \frac{\beta}{p} \int_{1}^{T_{n,a}} x^{-1} \left(
\frac{\overline{F}(Z_{k}x)}{\overline{F}(Z_{k})} \right)  ^{\beta/p} \,dx -
\gamma_{1} \right]  .
\]

\end{proposition}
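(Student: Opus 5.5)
We start from the exact representation \eqref{eq:app-functional-Q} of Proposition~\ref{prop:app-functional}, multiplied by $\sqrt{k}$:
\[
\sqrt{k}\bigl(\widehat{\gamma}_{1,k}^{(\mathrm{NA,tr})}(\beta)-\gamma_{1}\bigr)=\sqrt{k}\Bigl[\alpha_{n}\int_{1}^{T_{n,a}}x^{-1}Q_{n}(x)^{\alpha_{n}}dx-\gamma_{1}\Bigr]-\sqrt{k}\,\alpha_{n}(\log T_{n,a})c_{n}^{\alpha_{n}}.
\]
Lemma~\ref{lem:boundary-truncation} applies under the supplementary rate condition and disposes of the boundary term. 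It remains to expand the integral term.

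We split this term as
\[
\sqrt{k}\Bigl[\alpha_{n}\int x^{-1}Q_{n}^{\alpha_{n}}-\tfrac{\beta}{p}\int x^{-1}Q_{n}^{\beta/p}\Bigr]+\tfrac{\beta}{p}\sqrt{k}\int x^{-1}\bigl(Q_{n}^{\beta/p}-\overline{F}_{Z}^{\beta/p}\bigr)dx+B_{n},
\]
where $\overline{F}_{Z}(x):=\overline{F}(Z_{k}x)/\overline{F}(Z_{k})$.

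For the middle piece, write $Q_{n}=\overline{F}_{Z}+k^{-1/2}\Delta_{n}$. Proposition~\ref{prop:app-stochastic}(ii) gives $\Delta_{n}=O_{\mathbf{P}}(x^{(2\eta-p)/\gamma+\varepsilon})$, and (iii) shows that $Q_{n}$ and $\overline{F}_{Z}$ are both of exact order $x^{-p/\gamma}$ uniformly. A first-order Taylor (mean value) expansion then gives
\[
Q_{n}^{\beta/p}-\overline{F}_{Z}^{\beta/p}=\tfrac{\beta}{p}x^{-(\beta-p)/\gamma}k^{-1/2}\Delta_{n}(1+o_{\mathbf{P}}(1)).
\]
The quadratic remainder is $O_{\mathbf{P}}(k^{-1}x^{-(\beta-2p)/\gamma}x^{2(2\eta-p)/\gamma+2\varepsilon})$. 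Multiplying by $\sqrt{k}x^{-1}$ and integrating over $[1,T_{n,a}]$ yields at most $k^{-1/2}T_{n,a}^{(4\eta-\beta)/\gamma+\ldots}$, which must be shown to be $o_{\mathbf{P}}(1)$. This follows because $T_{n,a}^{1/\gamma}\asymp k/(na_{n})\le k$ and we may take $\eta$ near $1/4$ with $\beta>1$; this is where the truncation is used.

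Next we replace $\sqrt{k}\Delta_{n}$ by $x^{\beta/\gamma}D_{n,a}^{(\beta)}(x)$ up to the deterministic bias $\sqrt{k}(\overline{F}_{Z}-x^{-1/\gamma_{1}})$. This bias is $O(\sqrt{k}A_{1})$ times an integrable weight, and it is absorbed jointly with $B_{n}$, as in the identity of Theorem~\ref{theorem1}. Combining $x^{-1}\cdot x^{-(\beta-p)/\gamma}\cdot x^{\beta/\gamma}=x^{-1+1/\gamma_{1}}$ produces the stated term $(\beta/p)^{2}\int_{1}^{T_{n,a}}x^{-1+1/\gamma_{1}}D_{n,a}^{(\beta)}dx$. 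The $(1+o_{\mathbf{P}}(1))$ factor is negligible by dominated convergence, since $x^{-1-(\beta-p)/\gamma}|J_{n}|$ is integrable using the bound of Proposition~\ref{prop:app-stochastic}(i) with $2\eta<\beta$.

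The first piece, coming from the random exponent, is the main obstacle. Differentiating in the exponent, it equals $\sqrt{k}(\alpha_{n}-\beta/p)\int x^{-1}Q_{n}^{a}(1+a\log Q_{n})dx$ for some $a$ between $\alpha_{n}$ and $\beta/p$. Evaluated at the limit $Q_{n}\approx x^{-p/\gamma}$, the integral is $\int_{1}^{\infty}x^{-1-\beta/\gamma}(1-\tfrac{\beta}{\gamma}\log x)dx=\gamma/\beta-\gamma/\beta=0$. Hence this contribution is $\sqrt{k}(\alpha_{n}-\beta/p)\cdot o_{\mathbf{P}}(1)$, and $\sqrt{k}(\widehat{p}_{k}-p)=O_{\mathbf{P}}(1)$ by the Brownian-bridge approximation of \cite{BMN-2015}. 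To make the $o_{\mathbf{P}}(1)$ rigorous, we would bound the integral difference uniformly with the domination of Lemma~\ref{lem:domination-Qn} (a $\log$ factor is harmless) and with the tail $\int_{T_{n,a}}^{\infty}$, which vanishes since $T_{n,a}\to\infty$. Collecting the three pieces gives the linearization.
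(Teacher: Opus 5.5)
\textbf{The gap.} The step that fails is where the deterministic residual is ``absorbed jointly with $B_n$''. The process $x^{\beta/\gamma}D_{n,a}^{(\beta)}(x)=\sqrt{k}\{Q_n(x)-x^{-1/\gamma_1}\}$ is uncentered, so your linear term splits as
\[
\left(\frac{\beta}{p}\right)^{2}\int_{1}^{T_{n,a}}x^{-1-(\beta-p)/\gamma}\sqrt{k}\left\{Q_{n}(x)-\overline{F}_{Z}(x)\right\}dx=\left(\frac{\beta}{p}\right)^{2}\int_{1}^{T_{n,a}}x^{-1+1/\gamma_{1}}D_{n,a}^{(\beta)}(x)\,dx+\rho_{n},
\]
\[
\rho_{n}:=-\left(\frac{\beta}{p}\right)^{2}\sqrt{k}\int_{1}^{T_{n,a}}x^{-1-(\beta-p)/\gamma}\left\{\overline{F}_{Z}(x)-x^{-1/\gamma_{1}}\right\}dx .
\]
Expand $u\mapsto u^{\beta/p}$ at $x^{-1/\gamma_1}$ inside $B_n$, and use $\sqrt{k}\,T_{n,a}^{-\beta/\gamma}\asymp\sqrt{k}(na_n/k)^{\beta}\to0$. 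This gives, as soon as $\sqrt{k}A_1(h)=O(1)$,
\[
B_n=\frac{\beta}{\beta-p\tau_1}\sqrt{k}A_1(h)+o_{\mathbf{P}}(1)=-\rho_n+o_{\mathbf{P}}(1).
\]
So $\rho_n$ can only be absorbed by cancelling $B_n$. What your argument actually proves is
\[
\sqrt{k}\left(\widehat{\gamma}_{1,k}^{(\mathrm{NA,tr})}(\beta)-\gamma_{1}\right)=\left(\frac{\beta}{p}\right)^{2}\int_{1}^{T_{n,a}}x^{-1-(\beta-p)/\gamma}\sqrt{k}\left\{Q_{n}(x)-\overline{F}_{Z}(x)\right\}dx+B_{n}+o_{\mathbf{P}}(1),
\]
or equivalently the displayed identity with $B_n$ deleted.

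The identity as stated keeps both the uncentered $D_{n,a}^{(\beta)}$ and $B_n$. By Theorem~\ref{theorem1}, the deterministic part of $D_{n,a}^{(\beta)}$ already integrates to $\frac{\beta}{\beta-p\tau_1}\sqrt{k}A_1(h)+o_{\mathbf{P}}(1)$. The statement therefore counts the bias twice and holds only when $\sqrt{k}A_1(h)\to0$. When $\sqrt{k}A_1(h)\to\lambda\neq0$, no argument can close this step.

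The paper's own proof has the same hole. It asserts $\sqrt{k}\{Q_n-A_n\}=x^{\beta/\gamma}D_{n,a}^{(\beta)}+o_{\mathbf{P}}(1)$ ``in the weighted integral sense''. The proof of Theorem~\ref{theorem3} then silently drops the bias part of $D_{n,a}^{(\beta)}$, so the two slips cancel and the final limit there is correct. Note also that quantifying $\rho_n$ or $B_n$ at all uses the second-order condition, which is not among the hypotheses: those of Theorem~\ref{theorem2} are first-order only.

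\textbf{The rest of the plan.} It follows the paper's route: exact representation, Lemma~\ref{lem:boundary-truncation} for the boundary term, and a Taylor expansion of $u\mapsto u^{\beta/p}$. In one respect it is better. The paper merely asserts that replacing $\alpha_n$ by $\beta/p$ costs $o_{\mathbf{P}}(1)$. Your observation that $a\int_{1}^{\infty}x^{-1-a/\gamma_1}\,dx\equiv\gamma_1$, so the $a$-derivative vanishes at the limit, is the actual reason.

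That step does need $\sqrt{k}(\widehat{p}_k-p)=O_{\mathbf{P}}(1)$. This involves the bias of $\widehat{p}_k$, and hence second-order conditions on both $\overline{F}$ and $\overline{G}$, again beyond the stated hypotheses.

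Finally, you use (iii) to get $Q_n$ of exact order $x^{-p/\gamma}$ uniformly on $[1,T_{n,a}]$. This requires $k^{-1/2}T_{n,a}^{2\eta/\gamma+\varepsilon}\to0$. With $T_{n,a}^{1/\gamma}\asymp k/(na_n)$ and $\eta>1/4$, that forces $na_n$ to grow like a positive power of $k$. So the bound gives nothing for the recommended $na_n\asymp\log\log k$. You inherit this from Lemma~\ref{lem:domination-Qn}, which asserts it without justification.
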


\begin{proof}
We start from the exact functional representation established in
Proposition~\ref{prop:app-functional}:
\[
\widehat{\gamma}_{1,k}^{(\mathrm{NA,tr})}(\beta)=\alpha_{n}\int_{1}^{T_{n,a}%
}x^{-1}Q_{n}(x)^{\alpha_{n}}\,dx-\alpha_{n}(\log T_{n,a})c_{n}^{\alpha_{n}},
\]
where
\[
\alpha_{n}:=\frac{\beta}{\widehat{p}_{k}}.
\]
Since $\widehat{p}_{k}\overset{\mathbf{P}}{\longrightarrow}p$, we have
\[
\alpha_{n}\overset{\mathbf{P}}{\longrightarrow}\frac{\beta}{p}.
\]
Set
\[
A_{n}(x):=\frac{\overline{F}(Z_{k}x)}{\overline{F}(Z_{k})}.
\]
We decompose
\[
\begin{aligned}
		\sqrt{k}\left(
		\widehat{\gamma}_{1,k}^{(\mathrm{NA,tr})}(\beta)-\gamma_{1}
		\right)
		&=
		\sqrt{k}
		\left[
		\alpha_n\int_{1}^{T_{n,a}}x^{-1}Q_n(x)^{\alpha_n}\,dx
		-
		\frac{\beta}{p}\int_{1}^{T_{n,a}}x^{-1}A_n(x)^{\beta/p}\,dx
		\right]  \\
		&\quad+
		\sqrt{k}
		\left[
		\frac{\beta}{p}\int_{1}^{T_{n,a}}x^{-1}A_n(x)^{\beta/p}\,dx
		-\gamma_1
		\right]  \\
		&\quad-
		\sqrt{k}\,\alpha_n(\log T_{n,a})c_n^{\alpha_n}.
	\end{aligned}
\]
The second term is exactly $B_{n}$. By Lemma~\ref{lem:boundary-truncation},
the last term is $o_{\mathbf{P}}(1)$.

It remains to linearize the first term. By the uniform stochastic expansion of
the Nelson--Aalen tail ratio,
\[
Q_{n}(x)=A_{n}(x)+\{Q_{n}(x)-A_{n}(x)\},
\]
and Taylor's formula applied to the map $u\mapsto u^{\beta/p}$ gives
\[
Q_{n}(x)^{\beta/p}=A_{n}(x)^{\beta/p}+\frac{\beta}{p}A_{n}(x)^{\beta
/p-1}\{Q_{n}(x)-A_{n}(x)\}+r_{n}(x),
\]
where
\[
\sqrt{k}\int_{1}^{T_{n,a}}x^{-1}|r_{n}(x)|\,dx=o_{\mathbf{P}}(1).
\]
Moreover, the replacement of $\alpha_{n}$ by $\beta/p$ in the preceding
integral contributes only $o_{\mathbf{P}}(1)$ after multiplication by
$\sqrt{k}$. Hence
\[
\begin{aligned}
		&\sqrt{k}
		\left[
		\alpha_n\int_{1}^{T_{n,a}}x^{-1}Q_n(x)^{\alpha_n}\,dx
		-
		\frac{\beta}{p}\int_{1}^{T_{n,a}}x^{-1}A_n(x)^{\beta/p}\,dx
		\right] \\
		&\qquad =
		\left(\frac{\beta}{p}\right)^{2}
		\int_{1}^{T_{n,a}}x^{-1}A_n(x)^{\beta/p-1}
		\sqrt{k}\{Q_n(x)-A_n(x)\}\,dx
		+
		o_{\mathbf P}(1).
	\end{aligned}
\]
From the definition of the weighted truncated process,
\[
D_{n,a}^{(\beta)}(x)=\sqrt{k}\,x^{-\beta/\gamma}\left(  R_{n,a}%
(x)-x^{-1/\gamma_{1}}\right)  ,
\]
and from the fact that the truncation correction is negligible on the scale
considered, we have
\[
\sqrt{k}\{Q_{n}(x)-A_{n}(x)\}=x^{\beta/\gamma}D_{n,a}^{(\beta)}%
(x)+o_{\mathbf{P}}(1)
\]
in the corresponding weighted integral sense. Furthermore, by regular
variation,
\[
A_{n}(x)^{\beta/p-1}=x^{-(\beta/p-1)/\gamma_{1}}\{1+o_{\mathbf{P}}(1)\},
\]
uniformly in the same weighted integral sense. Since $p=\gamma/\gamma_{1}$,
\[
-\frac{\beta/p-1}{\gamma_{1}}+\frac{\beta}{\gamma}=\frac{1}{\gamma_{1}}.
\]
Therefore,
\[
x^{-1}A_{n}(x)^{\beta/p-1}\sqrt{k}\{Q_{n}(x)-A_{n}(x)\}=x^{-1+1/\gamma_{1}%
}D_{n,a}^{(\beta)}(x)+o_{\mathbf{P}}(1)
\]
in the weighted integral sense. Consequently,
\[
\begin{aligned}
		&\sqrt{k}
		\left[
		\alpha_n\int_{1}^{T_{n,a}}x^{-1}Q_n(x)^{\alpha_n}\,dx
		-
		\frac{\beta}{p}\int_{1}^{T_{n,a}}x^{-1}A_n(x)^{\beta/p}\,dx
		\right] \\
		&\qquad =
		\left(\frac{\beta}{p}\right)^{2}
		\int_{1}^{T_{n,a}}x^{-1+1/\gamma_1}D_{n,a}^{(\beta)}(x)\,dx
		+
		o_{\mathbf P}(1).
	\end{aligned}
\]
Combining this with the definition of $B_{n}$ and
Lemma~\ref{lem:boundary-truncation} gives the desired result.
\end{proof}

\begin{lemma}
\label{lemma:weighted-power-limit} Let $\beta>1$ and $0<p<1$. Assume that the
survival function $\overline{F}$ satisfies the first-order regular variation
condition \eqref{RVF}. Then, for each fixed $x\geq1$,
\[
\left(  Q_{n}(x)\right)  ^{\beta/p} \overset{\mathbf{P}}{\longrightarrow}
x^{-\beta/\gamma}.
\]

\end{lemma}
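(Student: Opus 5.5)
The strategy is to first show $Q_n(x)\overset{\mathbf{P}}{\longrightarrow}x^{-1/\gamma_1}$ for each fixed $x\ge1$, and then apply the continuous mapping theorem with the continuous map $u\mapsto u^{\beta/p}$ on $[0,\infty)$. The exponent is then right, because $1/\gamma_1=p/\gamma$ gives
\[
\left(x^{-1/\gamma_1}\right)^{\beta/p}=x^{-\beta/(p\gamma_1)}=x^{-\beta/\gamma}.
\]
So the whole proof reduces to pointwise consistency of the Nelson--Aalen tail ratio.

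For that consistency I split
\[
Q_n(x)-x^{-1/\gamma_1}=L_n(x)+\left(\frac{\overline{F}(xZ_k)}{\overline{F}(Z_k)}-x^{-1/\gamma_1}\right),
\]
where $L_n$ is the difference of the empirical and true tail ratios from Proposition~\ref{prop:app-stochastic}.

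\begin{itemize}
\item \emph{Deterministic term.} Since $Z_k=Z_{n-k:n}\to\infty$ in probability (because $k/n\to0$ and $\overline{H}$ has unbounded support), Proposition~\ref{prop:app-tailratio} applies on the event $\{Z_k\ge t_0\}$, whose probability tends to one. With $x$ fixed, this bracket is at most $\varepsilon\,x^{-1/\gamma_1+\varepsilon}$ there. Since $\varepsilon$ is arbitrary, the term is $o_{\mathbf{P}}(1)$.
\item \emph{Stochastic term.} Proposition~\ref{prop:app-stochastic}(ii) gives $\sqrt{k}\,L_n(x)=O_{\mathbf{P}}\big(x^{(2\eta-p)/\gamma+\varepsilon}\big)$. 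For fixed $x$ the right-hand side is a constant, so $L_n(x)=O_{\mathbf{P}}(k^{-1/2})=o_{\mathbf{P}}(1)$.
\end{itemize}

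One caveat: Proposition~\ref{prop:app-stochastic} is stated on the truncated range $1\le x\le T_{n,a}$. For fixed $x$ this is harmless, because $T_{n,a}\to\infty$ in probability. Indeed, $u_n=Q^{(1)}(a_n)$ sits at tail level $a_n=o(k/n)$, while $Z_k$ sits at level $k/n$, so $u_n/Z_k\to\infty$. Hence $x\le T_{n,a}$ with probability tending to one.

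The main obstacle is relying on the Gaussian-approximation machinery (Theorem~\ref{theorem1}, and through it Proposition~\ref{prop:app-stochastic}) only to get a pointwise law of large numbers, which is more than needed. If I want to avoid that dependence, I would argue directly from \eqref{emp-F-formula-adapt}. We have
\[
-\log Q_n(x)=\int_{Z_k}^{xZ_k}\frac{dH_n^{(1)}}{\overline{H}_n(y-)}.
\]
Dividing numerator and denominator by $k/n$ and using consistency of the tail empirical functions, $\frac{n}{k}\overline{H}_n(yZ_k)\to y^{-1/\gamma}$ and $\frac{n}{k}\big(H_n^{(1)}(yZ_k)-H_n^{(1)}(Z_k)\big)\to p\,(1-y^{-1/\gamma})$, uniformly on the compact $[1,x]$. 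Here the second limit uses $dH^{(1)}=\overline{G}\,dF$, which is regularly varying with index $-1/\gamma$ and weight $p$. The integral then converges in probability to
\[
\int_1^x \frac{p\,\gamma^{-1}y^{-1/\gamma-1}}{y^{-1/\gamma}}\,dy=\frac{p}{\gamma}\log x=\frac{1}{\gamma_1}\log x.
\]
This gives $Q_n(x)\overset{\mathbf{P}}{\longrightarrow}x^{-1/\gamma_1}$ by continuity of $\exp$, and the denominator stays bounded away from zero on $[1,x]$, so no boundary issue arises. Either route closes the proof via the continuous mapping step above.
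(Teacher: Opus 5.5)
Your first route is essentially the paper's proof. The paper also obtains $Q_n(x)=\overline{F}(xZ_k)/\overline{F}(Z_k)+o_{\mathbf{P}}(1)$ from Proposition~\ref{prop:app-stochastic}, uses regular variation at the random threshold $Z_k\to\infty$ to get $Q_n(x)\overset{\mathbf{P}}{\longrightarrow}x^{-1/\gamma_1}$, and concludes by continuous mapping and $1/\gamma_1=p/\gamma$; you are slightly more careful in also checking that $x\le T_{n,a}$ with probability tending to one.

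Your alternative direct Nelson--Aalen argument is also correct and arguably better suited to the lemma. It needs only first-order regular variation of $\overline{F}$ and $\overline{G}$ and tail-empirical consistency on $[1,x]$. The paper's route instead passes through the Gaussian approximation of Theorem~\ref{theorem1}, which is stated under second-order conditions not assumed in the lemma.
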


\begin{proof}
Recall that
\[
Q_{n}(x) = \frac{\overline{F}_{n}^{(\mathrm{NA})}(Z_{k}x)} {\overline{F}%
_{n}^{(\mathrm{NA})}(Z_{k})}.
\]
By the consistency of the Nelson--Aalen tail ratio (see
Proposition~\ref{prop:app-stochastic}), we have, for each fixed $x\geq1$,
\[
Q_{n}(x) = \frac{\overline{F}(Z_{k}x)}{\overline{F}(Z_{k})} + o_{\mathbf{P}%
}(1).
\]
Since $\overline{F}$ is regularly varying with index $-1/\gamma_{1}$, we have
\[
\frac{\overline{F}(Z_{k}x)}{\overline{F}(Z_{k})} \overset{\mathbf{P}%
}{\longrightarrow} x^{-1/\gamma_{1}}.
\]
Therefore,
\[
Q_{n}(x) \overset{\mathbf{P}}{\longrightarrow} x^{-1/\gamma_{1}}.
\]
By continuity of the map $u\mapsto u^{\beta/p}$ on $(0,\infty)$, it follows
that
\[
\left(  Q_{n}(x)\right)  ^{\beta/p} \overset{\mathbf{P}}{\longrightarrow}
\left(  x^{-1/\gamma_{1}}\right)  ^{\beta/p}.
\]
Using the identity
\[
\frac{1}{\gamma_{1}}=\frac{p}{\gamma},
\]
we obtain
\[
\left(  x^{-1/\gamma_{1}}\right)  ^{\beta/p} = x^{-\beta/\gamma},
\]
which completes the proof.
\end{proof}

\begin{lemma}
\label{lemma:power-convergence-alpha} Let $\beta>1$ and $0<p<1$, and define
\[
\alpha_{n}:=\frac{\beta}{\widehat{p}_{k}}, \qquad\alpha_{n}\overset{\mathbf{P}%
}{\longrightarrow}\frac{\beta}{p}.
\]
Let $Q_{n}(x)$ denote the Nelson--Aalen tail ratio. Then, for each fixed
$x\geq1$,
\[
\left(  Q_{n}(x)\right)  ^{\alpha_{n}} \overset{\mathbf{P}}{\longrightarrow}
x^{-\beta/\gamma}.
\]

\end{lemma}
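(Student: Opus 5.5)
The plan is to reduce the statement to Lemma~\ref{lemma:weighted-power-limit} through a continuous-mapping argument that tolerates a random exponent. Fix $x\geq1$. As in the proof of Lemma~\ref{lemma:weighted-power-limit}, consistency of the Nelson--Aalen tail ratio (Proposition~\ref{prop:app-stochastic}) together with regular variation of $\overline{F}$ (Proposition~\ref{prop:app-tailratio}, using $Z_{k}\to\infty$ in probability) gives
\[
Q_{n}(x)\overset{\mathbf{P}}{\longrightarrow}x^{-1/\gamma_{1}}\in(0,1].
\]
We also use $\widehat{p}_{k}\overset{\mathbf{P}}{\longrightarrow}p>0$, which is taken as given in the paper, so that $\alpha_{n}\overset{\mathbf{P}}{\longrightarrow}\beta/p$. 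Since the limit $\beta/p$ is deterministic, the pair $(Q_{n}(x),\alpha_{n})$ converges in probability jointly to $(x^{-1/\gamma_{1}},\beta/p)$.

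Next, consider the map $\phi(u,a):=u^{a}=\exp(a\log u)$ on $(0,\infty)\times(0,\infty)$. It is jointly continuous there, and the limit point $(x^{-1/\gamma_{1}},\beta/p)$ lies in the interior of this domain. The continuous mapping theorem for convergence in probability then gives
\[
(Q_{n}(x))^{\alpha_{n}}\overset{\mathbf{P}}{\longrightarrow}\left(x^{-1/\gamma_{1}}\right)^{\beta/p}=x^{-\beta/(p\gamma_{1})}=x^{-\beta/\gamma},
\]
where the last equality uses $p=\gamma/\gamma_{1}$.

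Equivalently, the same step can be written explicitly:
\[
(Q_{n}(x))^{\alpha_{n}}-(Q_{n}(x))^{\beta/p}=(Q_{n}(x))^{\beta/p}\left\{\exp\big((\alpha_{n}-\beta/p)\log Q_{n}(x)\big)-1\right\}.
\]
Here $\log Q_{n}(x)=O_{\mathbf{P}}(1)$ and $\alpha_{n}-\beta/p=o_{\mathbf{P}}(1)$, so the bracket is $o_{\mathbf{P}}(1)$. The claim then follows from Lemma~\ref{lemma:weighted-power-limit}.

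The only delicate point is that $Q_{n}(x)$ might be $0$, where $\log$ is undefined. This is handled by a positivity step: because the limit $x^{-1/\gamma_{1}}$ is strictly positive, $Q_{n}(x)\geq\tfrac12x^{-1/\gamma_{1}}$ with probability tending to one. The Nelson--Aalen product is in fact strictly positive by construction, since it is an exponential of a finite sum. On that event $|\log Q_{n}(x)|$ is bounded by $\log 2+\gamma_{1}^{-1}\log x$, which gives the $O_{\mathbf{P}}(1)$ bound used above. Everything else is routine.
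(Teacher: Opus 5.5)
Your proposal is correct and follows essentially the same route as the paper. Both arguments take $Q_{n}(x)\overset{\mathbf{P}}{\longrightarrow}x^{-1/\gamma_{1}}>0$ from Lemma~\ref{lemma:weighted-power-limit}, combine it with $\alpha_{n}\overset{\mathbf{P}}{\longrightarrow}\beta/p$, apply the continuous mapping theorem to $(u,a)\mapsto u^{a}$ at a point with $u>0$, and conclude via $p\gamma_{1}=\gamma$. Your explicit exponential decomposition and your remark that $Q_{n}(x)>0$ by construction are harmless extras that make the continuity step slightly more transparent than in the paper.
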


\begin{proof}
From Lemma~\ref{lemma:weighted-power-limit}, we have, for each fixed $x\geq
1$,
\[
Q_{n}(x)\overset{\mathbf{P}}{\longrightarrow} x^{-1/\gamma_{1}}.
\]
Moreover,
\[
\alpha_{n} \overset{\mathbf{P}}{\longrightarrow} \frac{\beta}{p}.
\]
Since $x^{-1/\gamma_{1}}>0$, we can apply the continuous mapping theorem to
the function
\[
(u,a)\mapsto u^{a},
\]
which is continuous on $(0,\infty)\times\mathbb{R}$. Hence,
\[
\left(  Q_{n}(x)\right)  ^{\alpha_{n}} \overset{\mathbf{P}}{\longrightarrow}
\left(  x^{-1/\gamma_{1}}\right)  ^{\beta/p}.
\]
Using the identity
\[
\frac{1}{\gamma_{1}}=\frac{p}{\gamma},
\]
we obtain
\[
\left(  x^{-1/\gamma_{1}}\right)  ^{\beta/p} = x^{-\beta/\gamma},
\]
which completes the proof.
\end{proof}

\begin{lemma}
\label{lem:domination-Qn} Let $\beta>1$ and $0<p<1$. Assume that the
conditions of Theorem~\ref{theorem2} hold. Then, for any sufficiently small
$\varepsilon_{0}>0$, there exists a constant $C>0$ such that, with probability
tending to one,
\[
Q_{n}(x) \leq C\,x^{-1/\gamma_{1}+\varepsilon_{0}}, \qquad\text{uniformly for
}1\leq x\leq T_{n,a}.
\]
Consequently,
\[
x^{-1}\left(  Q_{n}(x)\right)  ^{\alpha_{n}} \leq C\,x^{-1-\beta
/\gamma+\varepsilon_{0}}, \qquad\text{uniformly for }1\leq x\leq T_{n,a},
\]
with probability tending to one.
\end{lemma}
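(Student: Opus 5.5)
I would prove Lemma~\ref{lem:domination-Qn} by writing the empirical tail ratio as its population counterpart plus a small relative error, and bounding each piece uniformly on the truncated domain. For $1\leq x\leq T_{n,a}$ write
\[
Q_{n}(x)=\frac{\overline{F}(xZ_{k})}{\overline{F}(Z_{k})}+L_{n}(x),
\]
with $L_{n}$ as in the proof of Proposition~\ref{prop:app-stochastic}.

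\textbf{Deterministic part.} Since $Z_{k}\rightarrow\infty$ in probability, with probability tending to one $Z_{k}\geq t_{0}(\varepsilon_{0})$. Proposition~\ref{prop:app-tailratio} (Potter bounds) then gives
\[
\frac{\overline{F}(xZ_{k})}{\overline{F}(Z_{k})}\leq(1+\varepsilon_{0})\,x^{-1/\gamma_{1}+\varepsilon_{0}}
\]
for all $x\geq1$ simultaneously. The random threshold is harmless, because the Potter inequality holds for every $t\geq t_{0}$.

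\textbf{Stochastic part.} This is the main obstacle: it must be controlled uniformly up to $T_{n,a}$, whose order is $(k/(na_{n}))^{\gamma}$. I would use assertion (iii) of Proposition~\ref{prop:app-stochastic}, namely
\[
Q_{n}(x)=x^{-p/\gamma}\bigl(1+O_{\mathbf{P}}(k^{-1/2}x^{2\eta/\gamma+\varepsilon})+o_{\mathbf{P}}(1)\bigr),
\]
together with $x^{-p/\gamma}=x^{-1/\gamma_{1}}$. On $[1,T_{n,a}]$,
\[
k^{-1/2}x^{2\eta/\gamma+\varepsilon}\leq k^{-1/2}\Bigl(\frac{k}{na_{n}}\Bigr)^{2\eta+\gamma\varepsilon}.
\]
This is $O(1)$ if $\eta$ is chosen near $1/4$ and $\varepsilon$ is small, because $(k/(na_{n}))^{1/2+}$ may still exceed $\sqrt{k}$ only by the factor $(na_{n})^{-1/2}\rightarrow0$. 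More precisely, $k^{-1/2}(k/(na_{n}))^{2\eta+\gamma\varepsilon}\leq k^{2\eta-1/2+\gamma\varepsilon}$, which is bounded once $2\eta+\gamma\varepsilon\leq1/2$.

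If that choice conflicts with $\eta>1/4$, I would fall back on a cruder route and absorb the extra factor into $x^{\varepsilon_{0}}$: for $x\geq k^{\delta}$, bound $x^{2\eta/\gamma+\varepsilon}k^{-1/2}\leq Cx^{\varepsilon_{0}}$ by using $x\leq T_{n,a}\leq k^{\gamma}$ up to a constant. Either way the conclusion is $Q_{n}(x)\leq Cx^{-1/\gamma_{1}+\varepsilon_{0}}$ uniformly with probability tending to one.

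\textbf{Powered bound.} Since $\alpha_{n}\overset{\mathbf{P}}{\rightarrow}\beta/p$, with probability tending to one $\alpha_{n}\in[\beta/p-\delta,\beta/p+\delta]$.

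If $C\geq1$, I would first reduce to $C\leq1$ when $x$ is large, using $Q_{n}\leq1$ since $Q_{n}$ is nonincreasing with $Q_{n}(1)=1$. Then $Q_{n}(x)^{\alpha_{n}}\leq\min(1,Cx^{-1/\gamma_{1}+\varepsilon_{0}})^{\alpha_{n}}$, which is at most
\[
C'\,x^{-(1/\gamma_{1}-\varepsilon_{0})(\beta/p-\delta)}=C'\,x^{-\beta/\gamma+\varepsilon_{0}'},
\]
where $\varepsilon_{0}'=\varepsilon_{0}\beta/p+\delta/\gamma_{1}-\varepsilon_{0}\delta$. This $\varepsilon_{0}'$ can be made arbitrarily small by shrinking $\varepsilon_{0}$ and $\delta$.

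Renaming $\varepsilon_{0}'$ as $\varepsilon_{0}$ and multiplying by $x^{-1}$ gives the stated bound $x^{-1}Q_{n}(x)^{\alpha_{n}}\leq Cx^{-1-\beta/\gamma+\varepsilon_{0}}$ uniformly on $1\leq x\leq T_{n,a}$, with probability tending to one.
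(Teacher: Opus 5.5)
Your argument has the same skeleton as the paper's proof: expansion (iii) of Proposition~\ref{prop:app-stochastic}, a uniform bound on the bracket, then raising to the power $\alpha_n$ and renaming $\varepsilon_0$. Your power step is essentially the paper's. The detour through $Q_n\le 1$ is harmless but unnecessary, since $x\ge 1$ and $\alpha_n\ge\beta/p-\delta$ already suffice.

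The one real difference is how you handle the stochastic term $k^{-1/2}x^{2\eta/\gamma+\varepsilon}$ on $[1,T_{n,a}]$, and there your fallback is the argument that actually works. The paper asserts that its supremum is $o_{\mathbf P}(1)$ by the assumptions on $a_n$. But $2\eta>1/2$, and $T_{n,a}$ is of order $(k/(na_n))^{\gamma}$, as the paper itself uses in Lemma~\ref{lem:boundary-truncation}. So the supremum is of order $k^{-1/2}(k/(na_n))^{2\eta+\gamma\varepsilon}$. This vanishes only if $na_n$ grows like a positive power of $k$, and it diverges for the recommended choice $na_n\asymp\log\log k$.

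For the same reason, your first attempt, which needs $2\eta+\gamma\varepsilon\le 1/2$, can never apply and should be dropped. State the absorption directly. Given $\varepsilon_0$, choose $\eta\in(1/4,1/2)$ and $\varepsilon>0$ with $2\eta-1/2+\gamma\varepsilon<\gamma\varepsilon_0$; this is allowed because Proposition~\ref{prop:app-stochastic} holds for every such $\eta$. Then $\sup_{1\le x\le T_{n,a}}k^{-1/2}x^{2\eta/\gamma+\varepsilon-\varepsilon_0}=o_{\mathbf P}(1)$. Hence the relative error is $x^{\varepsilon_0}\,o_{\mathbf P}(1)$ uniformly, and you obtain $Q_n(x)\le Cx^{-1/\gamma_1+\varepsilon_0}$ with a deterministic $C$, with probability tending to one, exactly as the statement requires.

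Finally, pair your Potter bound for $\overline F(xZ_k)/\overline F(Z_k)$ with assertion (ii) for $L_n$, not with (iii). The $o_{\mathbf P}(1)$ term in (iii) is claimed to be uniform on the growing range $[1,T_{n,a}]$. First-order regular variation alone does not deliver that without precisely the $x^{\varepsilon_0}$ loss that Potter's bound builds in.
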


\begin{proof}
Recall that
\[
Q_{n}(x)=\frac{\overline{F}_{n}^{(\mathrm{NA})}(Z_{k}x)}{\overline{F}%
_{n}^{(\mathrm{NA})}(Z_{k})}.
\]
By Proposition~\ref{prop:app-stochastic} (iii), uniformly for $1\leq x\leq
T_{n,a}$,
\[
Q_{n}(x)=x^{-p/\gamma}\left[  1+O_{\mathbf{P}}\!\left(  k^{-1/2}%
x^{2\eta/\gamma+\varepsilon}\right)  +o_{\mathbf{P}}(1)\right]  .
\]
Since $p/\gamma=1/\gamma_{1}$, we obtain
\[
Q_{n}(x)=x^{-1/\gamma_{1}}\left[  1+O_{\mathbf{P}}\!\left(  k^{-1/2}%
x^{2\eta/\gamma+\varepsilon}\right)  +o_{\mathbf{P}}(1)\right]  .
\]
We now control the stochastic factor. By the truncation condition $x\leq
T_{n,a}$ and the assumptions on $a_{n}$, we have
\[
\sup_{1\leq x\leq T_{n,a}}k^{-1/2}x^{2\eta/\gamma+\varepsilon}=o_{\mathbf{P}%
}(1).
\]
Hence,
\[
\sup_{1\leq x\leq T_{n,a}}\left\vert 1+O_{\mathbf{P}}\!\left(  k^{-1/2}%
x^{2\eta/\gamma+\varepsilon}\right)  +o_{\mathbf{P}}(1)\right\vert
=O_{\mathbf{P}}(1).
\]
Therefore, for any sufficiently small $\varepsilon_{0}>0$, there exists a
constant $C>0$ such that, with probability tending to one,
\[
Q_{n}(x)\leq Cx^{-1/\gamma_{1}+\varepsilon_{0}},\qquad1\leq x\leq T_{n,a}.
\]
Next, since
\[
\alpha_{n}=\frac{\beta}{\widehat{p}_{k}}\quad\text{and}\quad\widehat{p}%
_{k}\overset{\mathbf{P}}{\longrightarrow}p,
\]
we have
\[
\alpha_{n}\leq\frac{\beta}{p}+\varepsilon_{0}%
\]
with probability tending to one. Consequently,
\[
\left(  Q_{n}(x)\right)  ^{\alpha_{n}}\leq Cx^{-\alpha_{n}/\gamma_{1}%
+\alpha_{n}\varepsilon_{0}}.
\]
Using again $1/\gamma_{1}=p/\gamma$ and the convergence of $\alpha_{n}$, we
obtain, after possibly reducing $\varepsilon_{0}$,
\[
-\frac{\alpha_{n}}{\gamma_{1}}+\alpha_{n}\varepsilon_{0}\leq-\frac{\beta
}{\gamma}+\varepsilon_{0},
\]
with probability tending to one. Hence,
\[
\left(  Q_{n}(x)\right)  ^{\alpha_{n}}\leq Cx^{-\beta/\gamma+\varepsilon_{0}%
},\qquad1\leq x\leq T_{n,a}.
\]
Multiplying by $x^{-1}$ yields
\[
x^{-1}\left(  Q_{n}(x)\right)  ^{\alpha_{n}}\leq Cx^{-1-\beta/\gamma
+\varepsilon_{0}},
\]
which completes the proof.\newpage
\end{proof}

\section{\textbf{Appendix B \label{Appendix B}}}%

\begin{figure}[h]%
\centering
\includegraphics[
height=2.7025in,
width=5.6593in
]%
{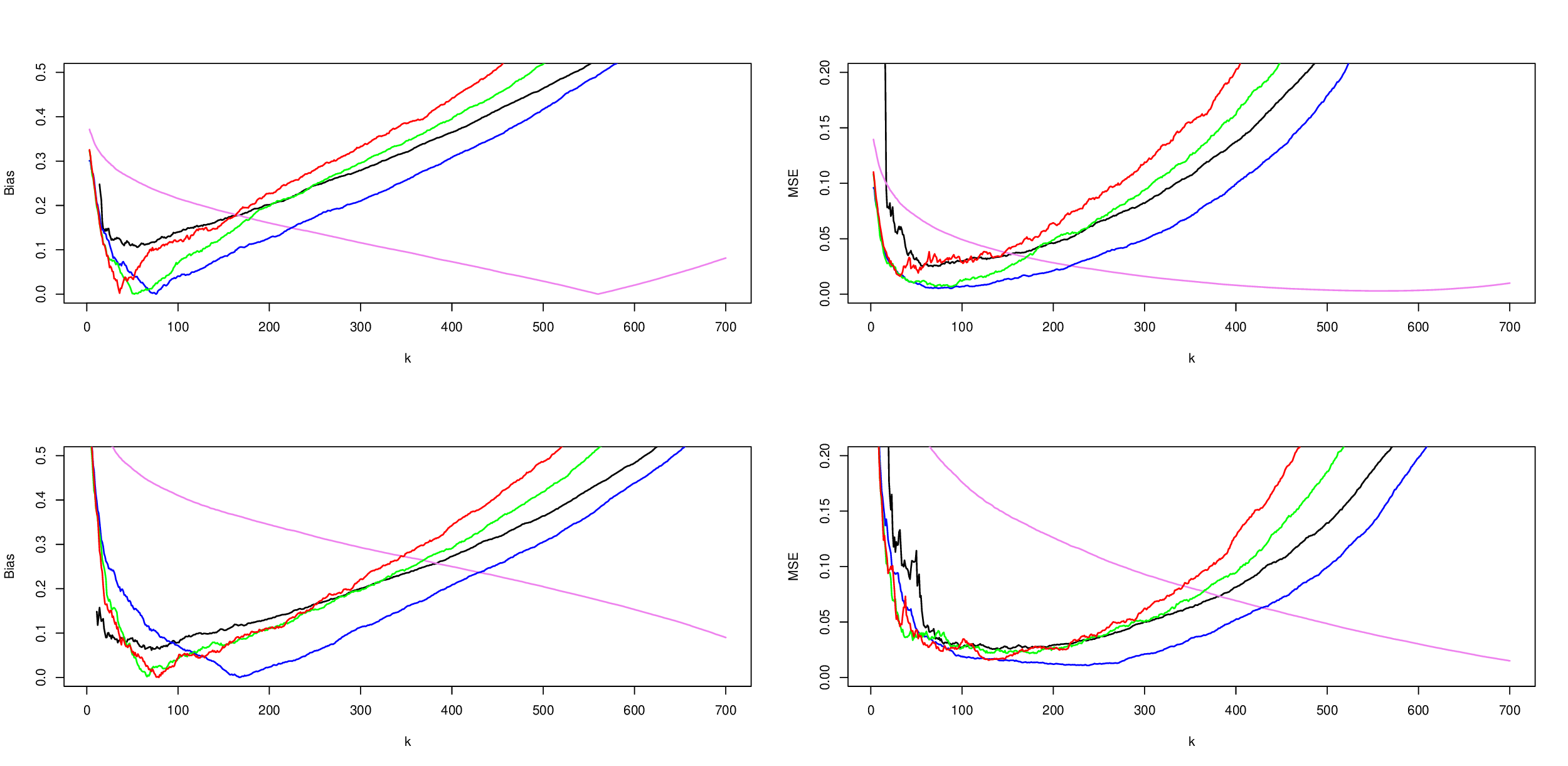}%
\caption{Bias (left panels) and MSE (right panels) of $\protect\widehat{\gamma
}_{1,k}^{\left(  NA\right)  }\left(  1.01\right)  $ (blue line),
$\protect\widehat{\gamma}_{1,k}^{\left(  NA\right)  }\left(  1.5\right)  $
(green line), $\protect\widehat{\gamma}_{1,k}^{\left(  NA\right)  }\left(
2\right)  $ (red line), $\protect\widehat{\gamma}_{1,k}^{\left(  MNS\right)
}$ (purple line) and $\protect\widehat{\gamma}_{1,k}^{\left(  EFG\right)  }$
(black line) based on $2000$ samples of size $1000$ from the Burr model
censored by the Burr distribution for $\gamma_{1}=0.4$ (top) and $\gamma
_{1}=0.7$ (bottom), with $p=0.30.$ }%
\label{fig1}%
\end{figure}
%

\begin{figure}[h]%
\centering
\includegraphics[
height=2.7129in,
width=5.5971in
]%
{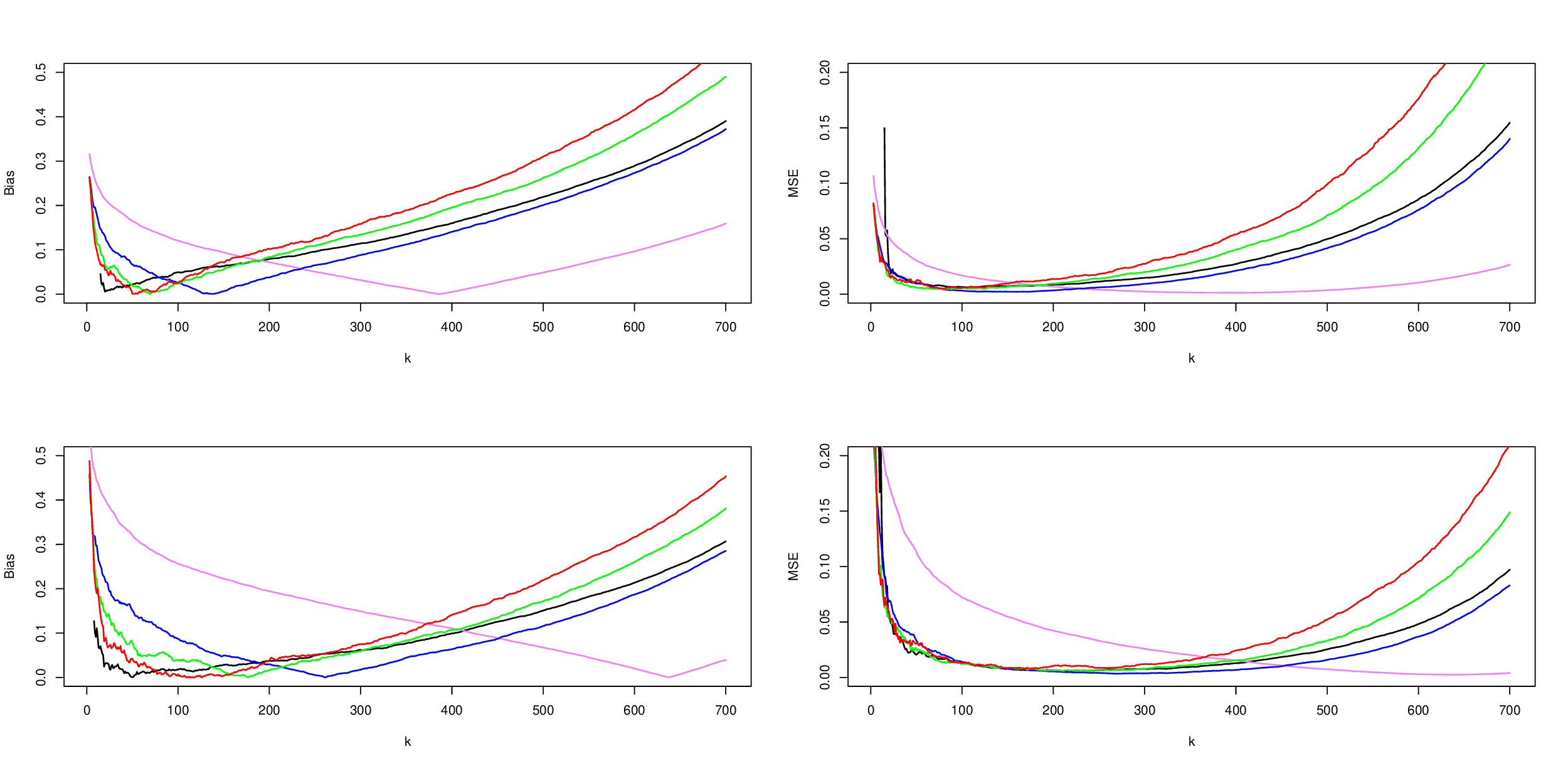}%
\caption{Bias (left panels) and MSE (right panels) of $\protect\widehat{\gamma
}_{1,k}^{\left(  NA\right)  }\left(  1.01\right)  $ (blue line),
$\protect\widehat{\gamma}_{1,k}^{\left(  NA\right)  }\left(  1.5\right)  $
(green line), $\protect\widehat{\gamma}_{1,k}^{\left(  NA\right)  }\left(
2\right)  $ (red line), $\protect\widehat{\gamma}_{1,k}^{\left(  MNS\right)
}$ (purple line) and $\protect\widehat{\gamma}_{1,k}^{\left(  EFG\right)  }$
(black line) based on $2000$ samples of size $1000$ from the Burr model
censored by the Burr distribution for $\gamma_{1}=0.4$ (top) and $\gamma
_{1}=0.7$ (bottom) with $p=0.50.$}%
\label{fig2}%
\end{figure}
%

\begin{figure}[h]%
\centering
\includegraphics[
height=2.6714in,
width=5.5971in
]%
{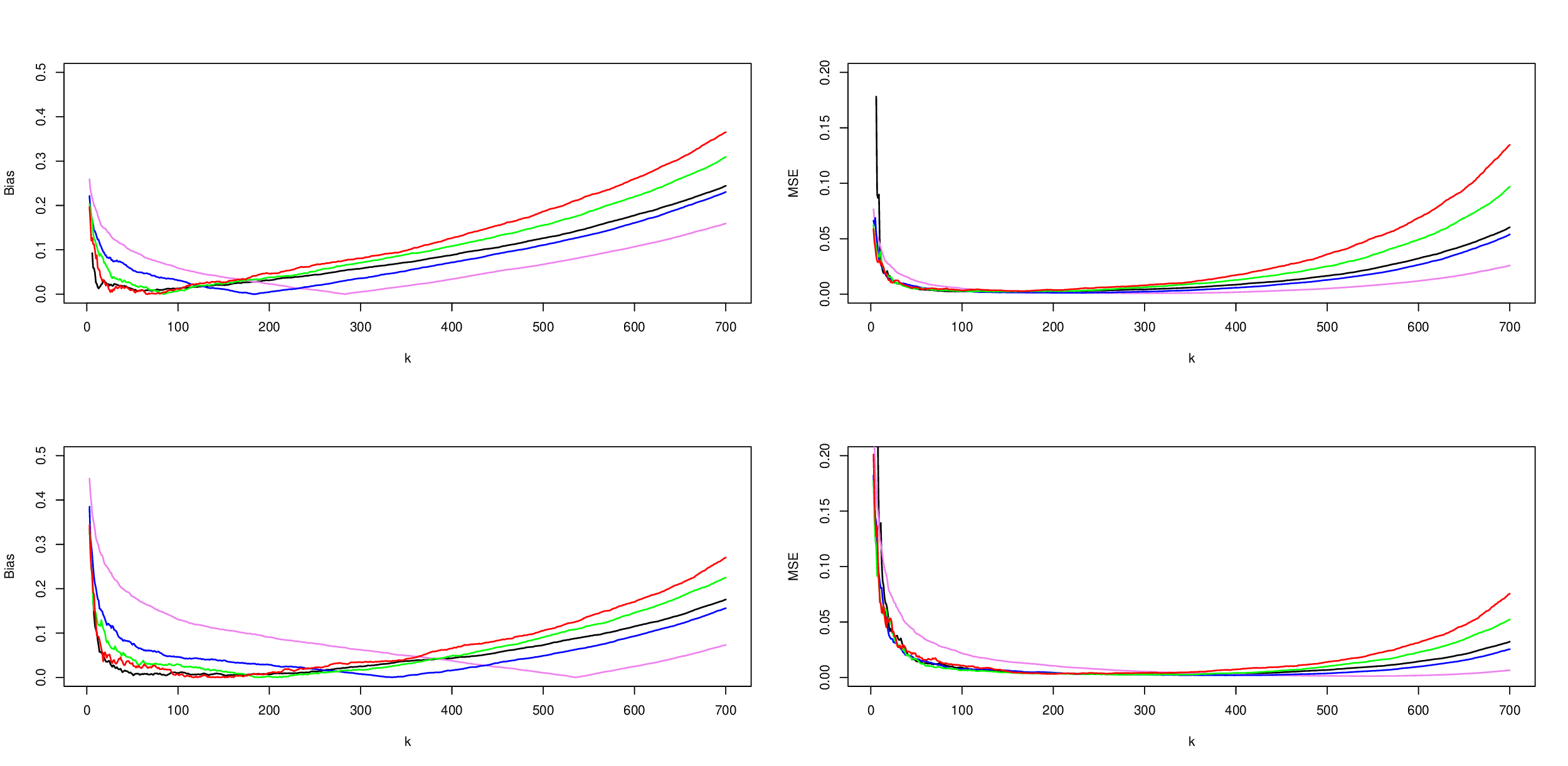}%
\caption{Bias (left panel) and MSE (right panel) of $\protect\widehat{\gamma
}_{1,k}^{\left(  NA\right)  }\left(  1.01\right)  $ (blue line),
$\protect\widehat{\gamma}_{1,k}^{\left(  NA\right)  }\left(  1.5\right)  $
(green line), $\protect\widehat{\gamma}_{1,k}^{\left(  NA\right)  }\left(
2\right)  $ (red line), $\protect\widehat{\gamma}_{1,k}^{\left(  MNS\right)
}$ (purple line) and $\protect\widehat{\gamma}_{1,k}^{\left(  EFG\right)  }$
(black line) based on $2000$ samples of size $1000$ from Burr model censored
by Burr for $\gamma_{1}=0.4$ (top) and $\gamma_{1}=0.7$ (bottom), with
$p=0.70.$}%
\label{fig3}%
\end{figure}
%

\begin{figure}[h]%
\centering
\includegraphics[
height=2.7025in,
width=5.5763in
]%
{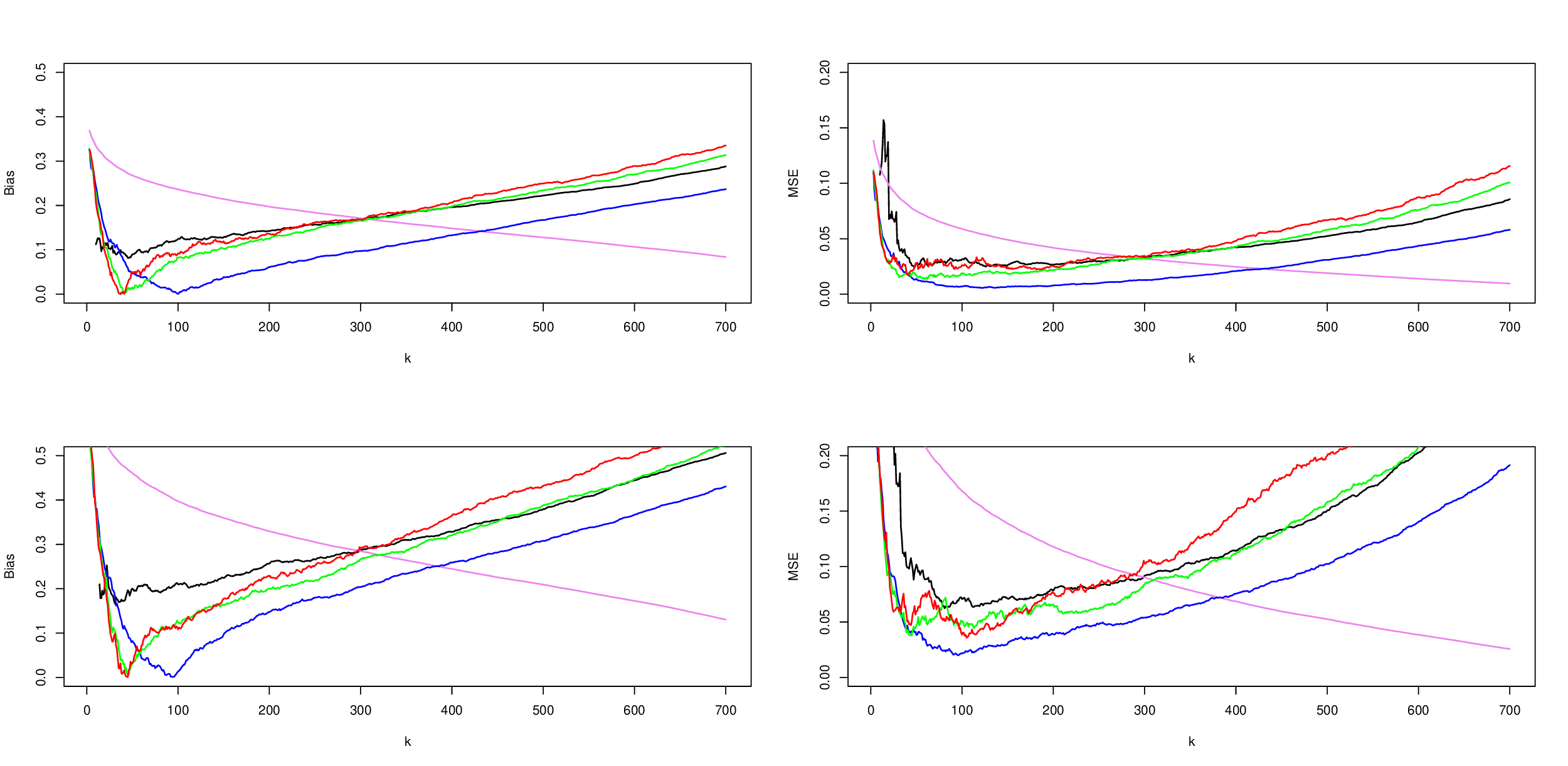}%
\caption{Bias (left panels) and MSE (right panels) of $\protect\widehat{\gamma
}_{1,k}^{\left(  NA\right)  }\left(  1.01\right)  $ (blue line),
$\protect\widehat{\gamma}_{1,k}^{\left(  NA\right)  }\left(  1.5\right)  $
(green line), $\protect\widehat{\gamma}_{1,k}^{\left(  NA\right)  }\left(
2\right)  $ (red line), $\protect\widehat{\gamma}_{1,k}^{\left(  MNS\right)
}$ (purple line) and $\protect\widehat{\gamma}_{1,k}^{\left(  EFG\right)  }$
(black line) based on $2000$ samples of size $1000$ from the Fr\'{e}chet model
censored by the Fr\'{e}chet distribution for $\gamma_{1}=0.4$ (top) and
$\gamma_{1}=0.7$ (bottom), with $p=0.30.$}%
\label{fig4}%
\end{figure}
%

\begin{figure}[h]%
\centering
\includegraphics[
height=2.7233in,
width=5.5763in
]%
{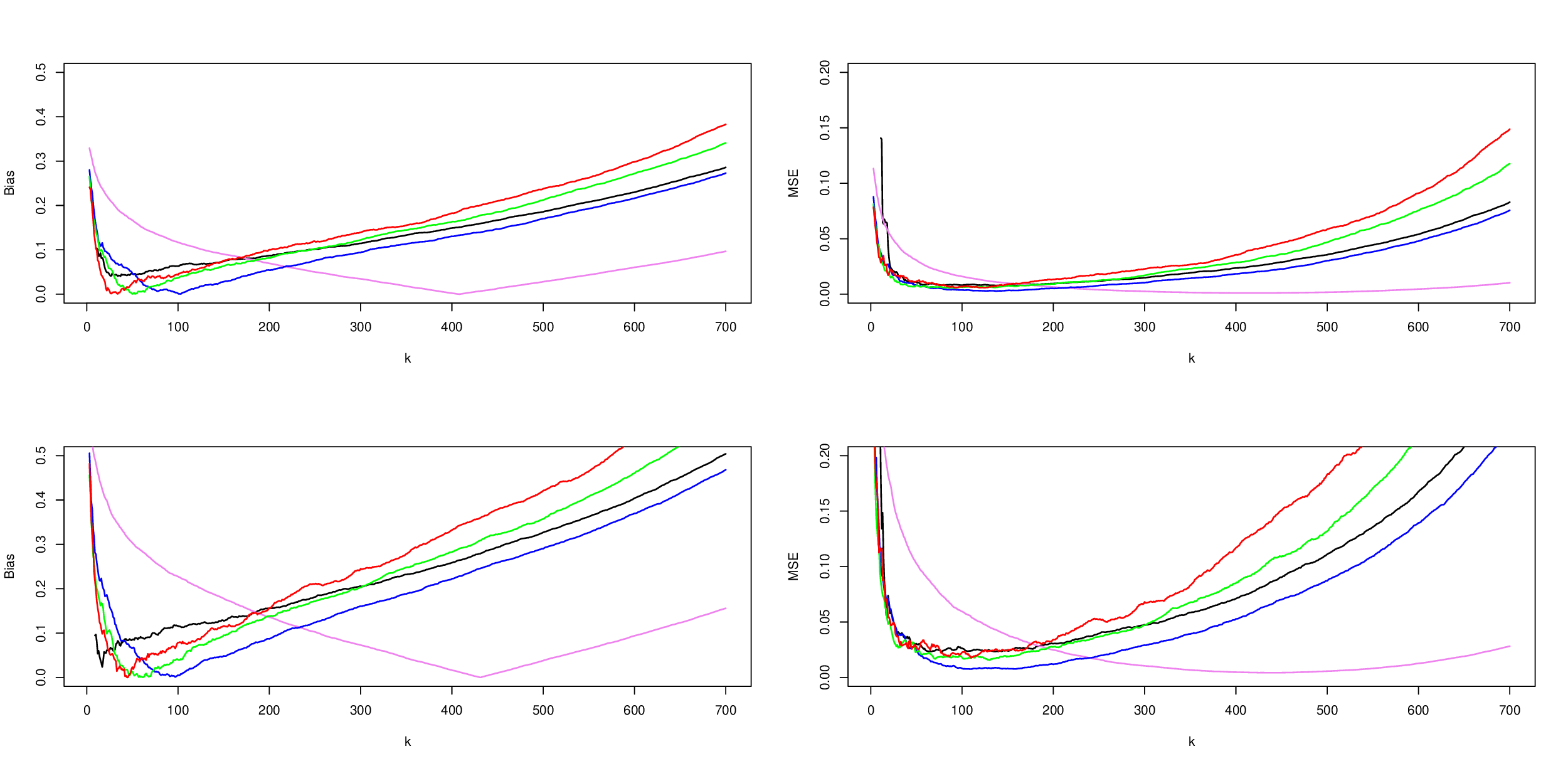}%
\caption{Bias (left panels) and MSE (right panels) of $\protect\widehat{\gamma
}_{1,k}^{\left(  NA\right)  }\left(  1.01\right)  $ (blue line),
$\protect\widehat{\gamma}_{1,k}^{\left(  NA\right)  }\left(  1.5\right)  $
(green line), $\protect\widehat{\gamma}_{1,k}^{\left(  NA\right)  }\left(
2\right)  $ (red line), $\protect\widehat{\gamma}_{1,k}^{\left(  MNS\right)
}$ (purple line) and $\protect\widehat{\gamma}_{1,k}^{\left(  EFG\right)  }$
(black line) based on $2000$ samples of size $1000$ from the Fr\'{e}chet model
censored by the Fr\'{e}chet distribution for $\gamma_{1}=0.4$ (top) and
$\gamma_{1}=0.7$ (bottom), with $p=0.50.$}%
\label{fig5}%
\end{figure}
%

\begin{figure}[h]%
\centering
\includegraphics[
height=2.7129in,
width=5.4829in
]%
{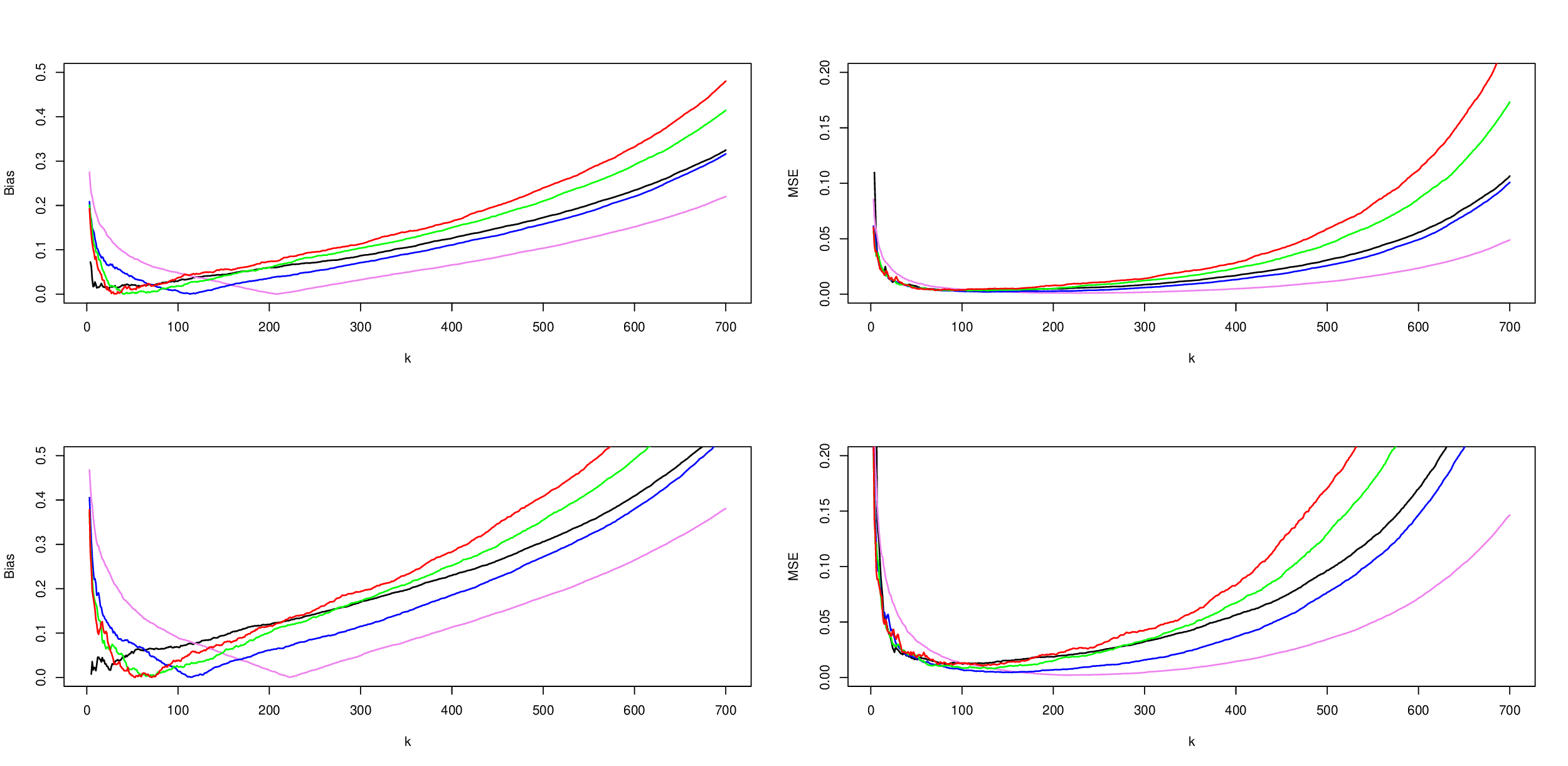}%
\caption{Bias (left panels) and MSE (right panels) of $\protect\widehat{\gamma
}_{1,k}^{\left(  NA\right)  }\left(  1.01\right)  $ (blue line),
$\protect\widehat{\gamma}_{1,k}^{\left(  NA\right)  }\left(  1.5\right)  $
(green line), $\protect\widehat{\gamma}_{1,k}^{\left(  NA\right)  }\left(
2\right)  $ (red line), $\protect\widehat{\gamma}_{1,k}^{\left(  MNS\right)
}$ (purple line) and $\protect\widehat{\gamma}_{1,k}^{\left(  EFG\right)  }$
(black line) based on $2000$ samples of size $1000$ from the Fr\'{e}chet model
censored by the Fr\'{e}chet distribution for $\gamma_{1}=0.4$ (top) and
$\gamma_{1}=0.7$ (bottom), with $p=0.70.$}%
\label{fig6}%
\end{figure}
%

\begin{figure}[h]%
\centering
\includegraphics[
height=3.0926in,
width=5.5011in
]%
{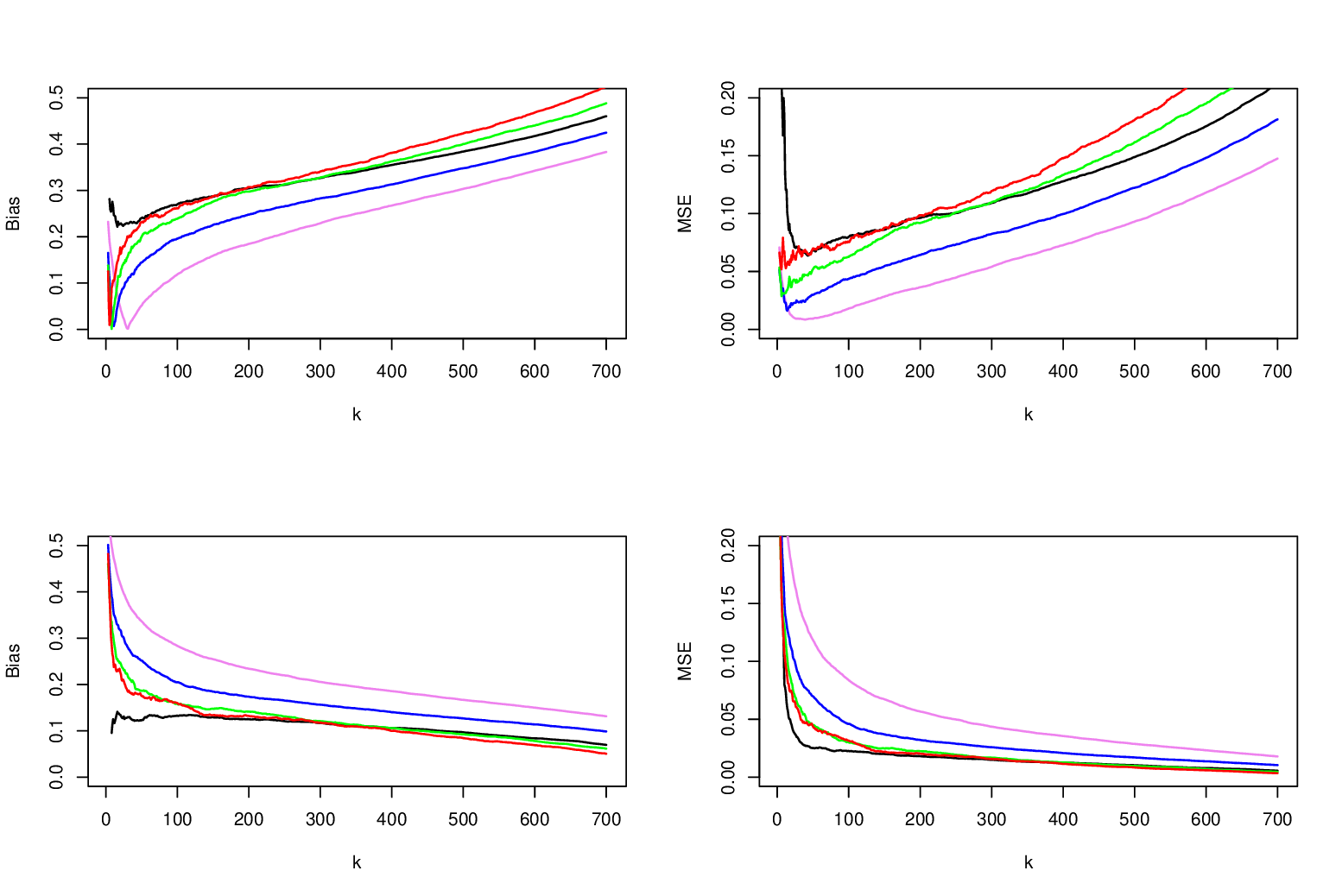}%
\caption{Bias (left panels) and MSE (right panels) of $\protect\widehat{\gamma
}_{1,k}^{\left(  NA\right)  }\left(  1.01\right)  $ (blue line),
$\protect\widehat{\gamma}_{1,k}^{\left(  NA\right)  }\left(  1.5\right)  $
(green line), $\protect\widehat{\gamma}_{1,k}^{\left(  NA\right)  }\left(
2\right)  $ (red line), $\protect\widehat{\gamma}_{1,k}^{\left(  MNS\right)
}$ (purple line) and $\protect\widehat{\gamma}_{1,k}^{\left(  EFG\right)  }$
(black line) based on $2000$ samples of size $1000$ from the Log-gamma model
censored by the Log-gamma distribution for $\gamma_{1}=0.4$ (top) and
$\gamma_{1}=0.7$ (bottom), with $p=0.30.$}%
\label{fig7}%
\end{figure}
%

\begin{figure}[h]%
\centering
\includegraphics[
height=3.0926in,
width=5.5089in
]%
{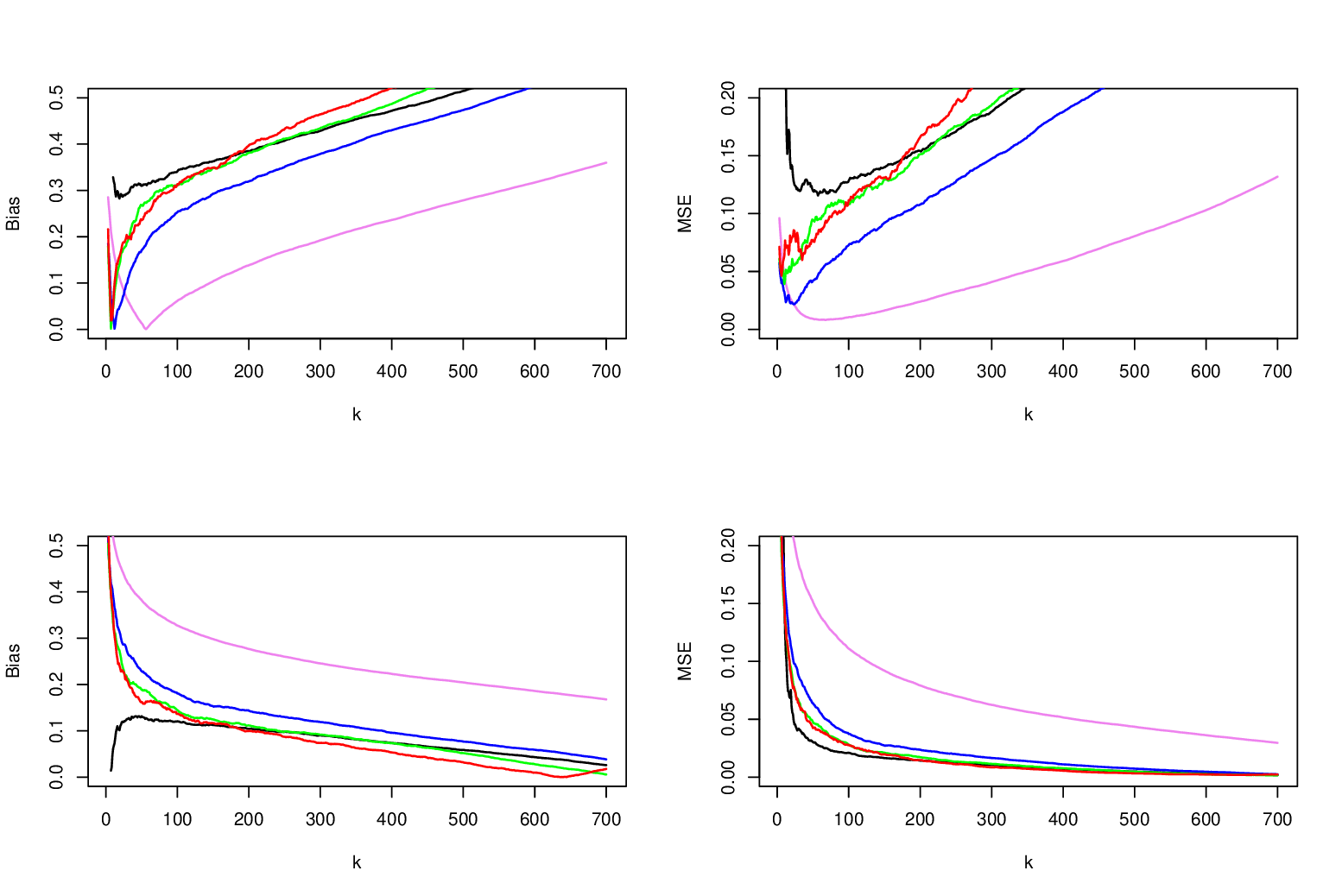}%
\caption{Bias (left panels) and MSE (right panels) of $\protect\widehat{\gamma
}_{1,k}^{\left(  NA\right)  }\left(  1.01\right)  $ (blue line),
$\protect\widehat{\gamma}_{1,k}^{\left(  NA\right)  }\left(  1.5\right)  $
(green line), $\protect\widehat{\gamma}_{1,k}^{\left(  NA\right)  }\left(
2\right)  $ (red line), $\protect\widehat{\gamma}_{1,k}^{\left(  MNS\right)
}$ (purple line) and $\protect\widehat{\gamma}_{1,k}^{\left(  EFG\right)  }$
(black line) based on $2000$ samples of size $1000$ from the Log-gamma model
censored by the Log-gamma distribution for $\gamma_{1}=0.4$ (top) and
$\gamma_{1}=0.7$ (bottom), with $p=0.50.$}%
\label{fig8}%
\end{figure}
%

\begin{figure}[h]%
\centering
\includegraphics[
height=3.0926in,
width=5.5011in
]%
{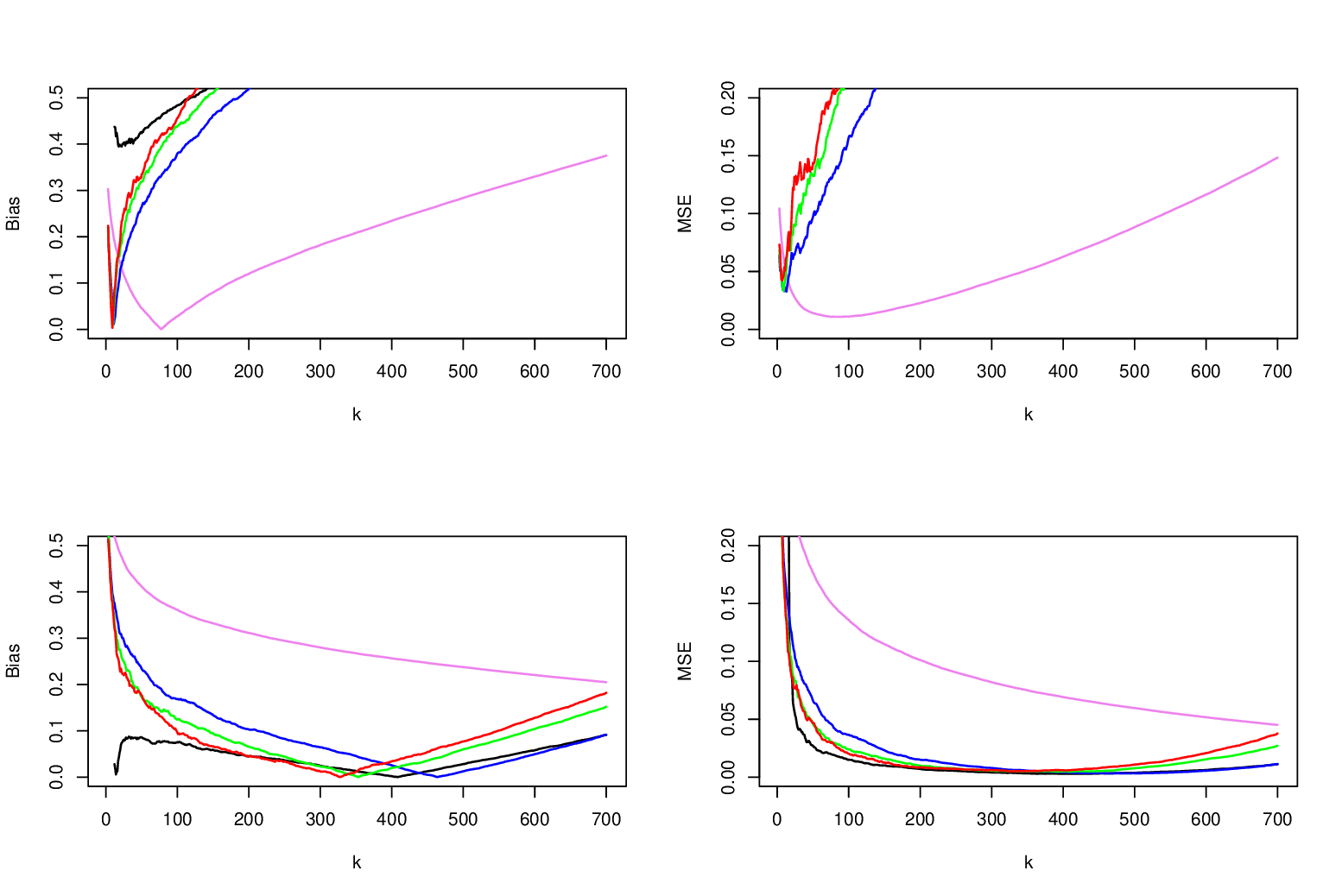}%
\caption{Bias (left panels) and MSE (right panels) of $\protect\widehat{\gamma
}_{1,k}^{\left(  NA\right)  }\left(  1.01\right)  $ (blue line),
$\protect\widehat{\gamma}_{1,k}^{\left(  NA\right)  }\left(  1.5\right)  $
(green line), $\protect\widehat{\gamma}_{1,k}^{\left(  NA\right)  }\left(
2\right)  $ (red line), $\protect\widehat{\gamma}_{1,k}^{\left(  MNS\right)
}$ (purple line) and $\protect\widehat{\gamma}_{1,k}^{\left(  EFG\right)  }$
(black line) based on $2000$ samples of size $1000$ from the Log-gamma model
censored by the Log-gamma distribution for $\gamma_{1}=0.4$ (top) and
$\gamma_{1}=0.7$ (bottom), with $p=0.70.$}%
\label{fig9}%
\end{figure}

\end{document}